\documentclass[11pt,a4paper]{article}

\usepackage{geometry} 
\usepackage{amsmath,amsfonts,amsthm,amssymb, mathtools, mathrsfs} 
\usepackage[dvipsnames]{color, xcolor}  
\usepackage{enumitem} 
\usepackage{graphicx, float, subfig, overpic} 
\usepackage{titlesec} 
\usepackage[none]{hyphenat} 
\usepackage[font=small,labelfont=bf,tableposition=top]{caption}
\usepackage{todonotes, comment} 
\usepackage[normalem]{ulem}
\renewcommand{\emph}[1]{\textit{#1}}

\usepackage{hyperref} 
\hypersetup{
	colorlinks=true,
	linkcolor=blue,
	citecolor=blue,
}

\def\tp{{\tt p}}

\def\cK{\mathcal{K}}
\def\tC{{\Gamma}}
\def\ta{{\tt a}}
\def\tnu{{\tt v}}

\def\cU{{\mathcal U}}
\def\cUr{{\mathcal U}_{\sigma/2}^{\R}}
\def\cV{{\mathcal V}}

\def\cM{{\mathcal M}}
\def\cB{{\mathcal B}}
\def\cA{{\mathcal A}}
\def\cC{{\mathcal C}}

\def\R{\mathbb R}
\def\Z{\mathbb Z}
\def\sleq{\lesssim}
\def\sgeq{\gtrsim}
\def\betas{{\beta}}
\def\bC{\mathbb C}

\definecolor{greenmunsell}{rgb}{0.0, 0.66, 0.47}

\definecolor{beatrice}{rgb}{1.0, 0.0, 0.5}
\definecolor{santiago}{rgb}{0.0, 0.0, 1.0}
\definecolor{darkGreen}{RGB}{0, 200, 0}
\numberwithin{equation}{section}
\newcommand{\ombar}{\overline{\omega}}

\theoremstyle{plain}
\newtheorem{lemma}{Lemma}[section]
\newtheorem{corollary}[lemma]{Corollary}
\newtheorem{proposition}[lemma]{Proposition}
\newtheorem{remark}[lemma]{Remark}

\newtheorem{definition}[lemma]{Definition}

\newtheorem{theorem}[lemma]{Theorem}

\newcommand{\st}{\,:\,}
\newcommand{\norm}[1]{\left\lVert#1\right\rVert}
\newcommand{\vabs}[1]{\left| #1 \right|}

\newcommand{\claus}[1]{\left\{#1\right\}}

\newcommand{\dist}[2]{\mathrm{dist}\left(#1 \,, #2\right)}
\newcommand{\diam}[1]{\mathrm{diam}\left(#1\right)}

\newcommand{\wt}{\widetilde}

\newcommand{\reals}{\mathbb{R}}
\newcommand{\naturals}{\mathbb{N}}

\newcommand{\torus}{\mathbb{T}}

\newcommand{\integers}{\mathbb{Z}}

\newcommand{\e}{\varepsilon}

\newcommand{\MM}{\mathcal{M}}

\definecolor{myGreen}{RGB}{0, 200, 0}
\definecolor{myOrange}{RGB}{255, 100, 0}
\definecolor{myYellow}{RGB}{255, 200, 0}
\definecolor{myBlue}{RGB}{0, 200, 255}
\definecolor{myPurple}{RGB}{126, 47, 142}

\title{Nekhoroshev Theorem for time quasiperiodic perturbations of
	P-Steep systems} 
\author{Dario Bambusi\footnote{Dipartimento di Matematica, Universit\`a degli Studi di Milano, Via Saldini 50, I-20133
		Milano { \tt{email:dario.bambusi@unimi.it}}}, Santiago Barbieri\footnote{Departament d'Informàtica, Matemàtica Aplicada i Estadística, Universitat de Girona, Carrer de la Universitat de Girona 6, E-17003
		Girona { \tt{email:santiago.barbieri@udg.edu}}}, Mar Giralt\footnote{LTE. Observatoire de Paris - Université PSL, Sorbonne Université, 77 Avenue Denfert-Rochereau, F-75014 Paris { \tt{email:mar.giralt@obspm.fr}}}, Beatrice Langella\footnote{Dipartimento di Matematica, Universit\`a di Pisa, Largo Bruno Pontecorvo 5, I-56127 Pisa \tt{email: beatrice.langella@unipi.it}}}

\date{\today}

\begin{document}
	
	\maketitle
	\begin{abstract}
		We prove a Nekhoroshev type result for a time
                quasiperiodic perturbation of an integrable
                Hamiltonian system. More precisely, we assume that the
                integrable part is analytic and fulfills a generic
                nondegeneracy condition introduced by Nekhoroshev and
                called P-Steepness.  We add a small per\-tur\-bation
                which depends in a quasiperiodic way on time (with
                Diophantine frequency) and prove that -- for times
                exponentially long with the inverse of the size
                $\varepsilon$ of the perturbation -- the actions of
                the unperturbed system remain approximately
                constant. The proof is based on an extension to the
                time dependent case of the proof  {of classical Nekhoroshev's theorem} given by Guzzo,
                Chierchia and Benettin, which however requires new
                ideas in order to deal with the more complex geometry
                of resonances of the time dependent case.
	\end{abstract}
	
	
	\section{Introduction}\label{intro}
	In this paper we prove that the solutions of a generic integrable
	Hamiltonian system subject to a perturbation depending
	quasi-periodically on time are stable in the sense of Nekhoroshev: the
	actions associated to the integrable system stay in a small
	neighborhood of their initial value for times that are
	exponentially long with the inverse of the size of the perturbation.
	
	To state our result more precisely, for any pair of positive integers $n,m\ge {1}$, we consider a real-analytic Hamiltonian of the form 
	\begin{align}\label{hamiltoniana}
		H(p,q,t) = h_0(p) + \e V(p,q,\nu_1 t,...,\nu_m t)\,,
	\end{align}
	where $(p,q)\in \cU\times \mathbb T^n$ are action-angle coordinates, 
	$\cU$ is an open domain of $\mathbb R^n$, 	$\mathbb
        T:=\R/2\pi\Z$, $(\nu_1 t,...,\nu_m t)\in
	\mathbb T^m$ and {$0\leq \varepsilon\ll 1$ is a small parameter}. We  assume $\nu =(\nu_1, \dots, \nu_m)$ to be Diophantine.

	Systems of this kind are often used as a paradigm for the study of the
	stability properties of the dynamics under an external driving force
	(see e.g. \cite{Chi79}) and  appear in approximate models
	in plasma physics \cite{Kro80} and in celestial mechanics
	\cite{BenettinFassoGuzzo1998,Pinzari2013}. They also constitute an interesting model for purely
	theoretical investigations of the dynamics \cite{LMS03}. 
	
	In the time-independent case (namely $m=0$), Nekhoroshev's
        theory has been the object of a very large number of studies
        (see e.g.
        \cite{Nek1,Nek2,BGG,Galla,Loc92,LocNei,Gio,GCB,BounemouraNiederman2012}),
        and it is known that if $h_0$ satisfies a generic
        transversality condition on its gradient known as Steepness,
        then Nekhoroshev's stability estimates hold. Convex and
        quasi-convex functions are the easiest examples of steep
        functions; however convexity and quasiconvexity are only open properties, while
        Steepness is generic \cite{Nekhoroshev1973,
          Barbieri2025}. Moreover, steep non-convex Hamiltonians do
        appear in models, e.g. in celestial mechanics close to the
        Lagrangian points and in the $N$-body problem close to zero
        eccentricities and inclinations
        \cite{BenettinFassoGuzzo1998,Pinzari2013}.

	Coming to time dependent perturbations, Nekhoroshev himself
        already observed in \cite{Nek1} that his stability result
        applies also to the case when the Hamiltonian $h_0$ is subject
        to a \textit{time periodic} perturbation (namely
        \eqref{hamiltoniana} with $m=1$),
        provided it satisfies, instead of
          Steepness, a different generic non-degeneracy condition,
          that he called P-Steepness (see Definition
\ref{def:p.steep} below).

The case of truly time quasiperiodic perturbations, namely $m\ge 2$,
was first studied by Bounemoura in \cite{BouTime} in the setting where
the forcing frequency is Diophantine and the unperturbed Hamiltonian
$h_0$ is convex: he proved that Nekhoroshev's stability actually does
hold even in that framework. 
	
In the present paper, we tackle the case of a truly quasiperiodic
perturbation and we remove the restriction to convex Hamiltonians
$h_0$:  we prove that, under the same non-degeneracy property
considered in \cite{Nek1}, namely P-Steepness, a Nekhoroshev type theorem still holds, with
stability times that reduce to those of Bounemoura in the convex case
(see Theorem \ref{main} for a precise statement).

	We emphasize that our result is, to the best of our knowledge, the first one providing exponential stability estimates that are robust under time quasiperiodic perturbations and hold under generic assumptions on the unperturbed system.
	
	\vspace{12pt}
	
The proof we give is different from Bounemoura's one, even if we
borrow some ideas from his construction. Indeed we use a variant of
Nekhoroshev's construction as improved by Guzzo, Chierchia and
Benettin in \cite{GCB}, which is known to provide
  the sharpest exponents available in the literature for the steep
  case. However, dealing with the time dependent case requires new
  ideas, which we are now briefly going to sketch.
  To explain them, we
first remark that, as it is often the case in the study of
nearly-integrable Hamiltonian systems, it is necessary to have a
quantitative control on ``small denominators", i.e. the
commensurability conditions that the frequencies of the unperturbed
system may satisfy in correspondence of the so-called
``resonances". Nekhoroshev's proof \cite{Nek1, GCB} is based on a
detailed analysis of the geometric structure of resonant zones, namely
the regions in the action space where the small denominators are
actually small.  In the time dependent case the small
denominators which appear are of the form
	\begin{equation}
		\label{risonn}
		\omega(p)\cdot k+\nu\cdot l\ ,\quad (k,l)\in\Z^{n+m}\ ,
	\end{equation}
where $\omega(p):=\nabla h_0(p)$ and the resonances are
classified by the maximal  {sublattices} $\Lambda$ of $\Z^{n+m}$ to which
the vectors $(k, l)$ belong. {A natural attempt could be to extend the original phase space by
adding to the Hamiltonian $H$ in \eqref{hamiltoniana} a term $\nu
\cdot J$, with $J \in \R^m$ a vector of variables conjugated to the
angles $(\phi_1, \dots, \phi_m) \equiv (\nu_1 t, \dots, \nu_m t)$, and
then to perform the geometric analysis of \cite{Nek1,GCB} in such extended
action space $(p, J)$. We immediately notice that this is not a
successful strategy, essentially due to the fact that non-degeneracy
conditions such as Steepness and its variants are based on the
property that, moving the actions $(p,J)$, the small denominators
change, whereas the
small divisors appearing in \eqref{risonn} are independent of the
extended action variables $J$,  {therefore they} do not move as $J$ vary. Thus
we work directly in the action space of the original system {, we analyze}
the structure of the resonant domains and study how to modify
Nekhoroshev's construction in order to deal with the quasiperiodic
forcing.}

\vspace{10pt}
	
 {We now present the main ideas of our approach and the difficulties involved.} Consider first the case where $p$ is such that the quantity
\eqref{risonn} is small only for $(k,l)$ varying in a one dimensional
 { sublattice} $\Lambda  {\subset \Z^{m+n}}$. The way we proceed consists in constructing a
vector $\ombar$ such that
\begin{equation}\label{frequency.space}
	\omega\cdot k+\nu\cdot l= (\omega-\ombar)\cdot k\ ,\quad
	\forall (k,l)\in\Lambda\ \,,
\end{equation} 
 {with $\omega \equiv \omega(p)$.} {We remark that in the time independent case $\ombar=0$.}
 {One thus sees that in frequency space, unlike in the time independent case, the resonant plane does not pass through the origin, but it is instead located at a finite distance from it.} We call the vector $\ombar$ center of the resonant plane
corresponding to $\Lambda$. Of course two resonant planes
corresponding {to two different} resonant moduli $\Lambda_1 \neq
\Lambda_2$ can intersect, 
giving rise to resonances of order two.  Now, in the time independent
case resonant planes can only intersect at subspaces passing through
the origin of the frequency space,
while in the time dependent case
there is a dense countable set of points $\bar \omega$ such that for
any $\ombar$ in such set, the resonant planes
intersect at affine subspaces {passing through $\ombar$}.
This leads to a much more intricate geometry of resonances, which in turn makes more delicate to rule out overlapping of resonances phenomena.


We resolve this issue by first taking advantage of the fact that,
  since $\nu$ is Diophantine, then at any finite order  {we can quantitatively bound from below the distance among different points $\ombar$.}
  The reason being
  that if two vectors $l,l'$ are s.t. $\nu\cdot l\simeq\nu\cdot l'$
  then by the Diophantine property  {the quantity} $\|l-l'\|$ must be large.
  
  Then the geometric
  construction is based on the remark that close to every center
  $\ombar$, the situation is similar to the one occurring at 
  the origin in the time independent case. We {observe}
  that in the time periodic case the set of the possible centers
  $\ombar$, instead of being asymptotically dense, forms a lattice:
  that is why the periodic case is much simpler.
	
	\vspace{12pt}

We conclude this introduction by remarking that the case of a
quasiperiodic forcing appears also when studying instability
phenomena, in particular when one considers the splitting of the
invariant manifolds of unstable {tori} in models inspired by
Arnol'd's classical construction \cite{LMS03}. Indeed, it is well
known that, in general, if the invariant manifolds intersect
transversally, then the sizes of their splitting angles are related to
the time of instability of a specific orbit. Therefore it is
particularly relevant to understand if the system \eqref{hamiltoniana}
is stable and to determine its time of stability.

\vspace{12pt}

	The rest of the paper is organized as follows: in
	Sect. \ref{set} we give a precise statement of our stability
	result, and we discuss it. In particular we examine the value of the
	stability exponents that we get and we compare them to those previously  obtained
	in different contexts. In Sect. \ref{geometric} we study the
	geometry of the resonances in phase space. This is the heart of the
	paper. In Sect. \ref{analytic} we put the system in resonant normal
	form in each one of the resonant domains constructed in
	Sect. \ref{geometric}. Actually we just use the main analytic lemma of
	\cite{poe}, which has a form suitable for our purpose. In
	Sect. \ref{dyn.arg} we use Nekhoroshev's idea that a solution can only
	exit from a resonant zone by entering in a zone fulfilling lower order
	resonances, thus allowing to conclude that after a finite number of
	resonance losses, the solution has to stop.  
	
	\vspace{10pt}
	
	\noindent{\it Acknowledgments.} {D. Bambusi, M. Giralt, and B. Langella have been supported by the research projects PRIN 2020XBFL 
	``Hamiltonian and dispersive PDEs'' of the Italian Ministry of
        Education and Research (MIUR). D. Bambusi has been supported by GNFM. B. Langella has been supported by GNAMPA.
      M. Giralt has been  supported by the Seal of Excellence program 2025 of Sorbonne Université. {S. Barbieri has been partially supported by grant RL001607 of Professor M. Guàrdia,
      funded by the Catalan Institution for Research and Advanced Studies (ICREA),
      and by the Juan de la Cierva fellowship JDC2023-052632-I. }} 
	
	\section{Setting and main result}\label{set}
	
	We give here the precise assumptions on the Hamiltonian \eqref{hamiltoniana}, the first one being that $\nu =(\nu_1,\dots,\nu_m)$ is a Diophantine vector,
	that is,
	
	\medskip 
	
	\noindent{\bf Hypothesis 1:} there exist $\tC>0$ {and
          $\tau\geq m-1$} such that
	\begin{equation}\label{eq:nuDiophantine}
		\vabs{\nu \cdot l} \geq \frac{\tC}{\|l\|^{\tau}}, 
		\qquad 
		\forall l \in \integers^m \setminus\claus{0},
	\end{equation}
	{where $ \left\|x\right\| $ indicates the standard
	Euclidean norm}.
	
	{{Secondly}, the Hamiltonian} $H$  {in \eqref{hamiltoniana}} is assumed to be analytic. {Namely, for any $ {d}\in \mathbb N$, and for
	$x\in\bC^d$ we denote by $\mathcal{B}_r(x)\subset \bC^d$ the
 complex open ball of radius $r$ and center $x$ associated to the norm $\left\|x\right\|$.}  Given a set $\cV\subset \bC^d$ and a parameter $r>0$, we indicate by
	\begin{equation}
		\label{extendion}
		\cV_r:=\bigcup_{x \in\cV}\mathcal{B}_r(x)\ ,\quad
		\cV_r^{\R}:=\cV_r\cap \R^d
	\end{equation}
	its complex extension of width $r$ and its real projection, respectively. \\
	Then, we assume that
	
	\medskip 
	
	\noindent{\bf Hypothesis 2:} there exists an open domain $\cU\subset\R^n$ and a
	positive parameter $\sigma$ s.t. $h_0$
	extends to a complex analytic function on $\cU_\sigma$ and $V$ extends
	to a complex analytic function on
	$\cU_\sigma\times\torus^n_\sigma\times \torus^m_\sigma$.

	\medskip

	We now recall the definition of P-Steepness {given in} \cite{Nek1}:
	\begin{definition}\label{def:p.steep}
		Consider an open domain $\mathcal{V} \subseteq \R^n$, a function  $h  \in
		C^2(\cV^{\R}_\sigma  , \reals)$, and let $\omega:= \nabla h$ be its gradient.
		
		\begin{itemize}
			\item[(i)] $h$ is said to be \emph{P-steep} at
			$p_0\in\cV$ if there exist a radius $r>0$, coefficients
			$D_1, \dots, D_n>0$ and exponents $\alpha_1, \dots, \alpha_n
			\geq 1$ such that for any {linear subspace} $M${$\subseteq \R^n$} of dimension
			$s=1,\dots, n$, one has
			\begin{equation}\label{p.steep.p0}
				\max_{\eta \in (0, \xi]}\, \min_{\substack{\|u\| = 1\\u \in M}} \left\|\Pi_M \left(\omega (p_0 + u \eta) - \omega(p_0)\right) \right\| > D_s \xi^{\alpha_s} \quad \forall \xi \in (0, r)\,.
			\end{equation}
			\item[(ii)] The function $h$ is said to be \emph{P-steep} on
			$\mathcal{V}$ if $h$ is P-steep at $p_0$
			for any $p_0 \in \mathcal{V}$, with uniform
			radius, coefficients and exponents, namely if there exist
			$r>0$, $D_1, \dots, D_n >0$, and $\alpha_1,\dots, \alpha_n
			\geq 1$ such that $h$ satisfies \eqref{p.steep.p0} for any
			$p_0 \in \mathcal{V}$.
		\end{itemize}
	\end{definition}

	We observe that convex and quasiconvex functions are P-steep 
	with
	\begin{equation}
		\label{alpha1}
		\alpha_1=...=\alpha_n=1\ .
	\end{equation}
	 
We also remark that convexity {and quasiconvexity} are open
properties, which however are not dense. {Concerning P-Steepness
  (and also Steepness)
  the following ``genericity type'' property was proved in
\cite{Nek1}.}

{
\begin{theorem}
\label{P.generic} [Theorem 1.13.B of \cite{Nek1}]
If $h$ is not $P$-steep in any neighborhood of a
point $p_0\in\cV$ then it is infinitely degenerate: the coefficients in
the Taylor series of $h$ about this point satisfy infinitely many
independent algebraic equations.
\end{theorem}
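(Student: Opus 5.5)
This is Nekhoroshev's genericity theorem, which the paper imports from \cite{Nek1}. Following Nekhoroshev's own strategy, I would argue by contraposition and work entirely at the level of finite Taylor jets at $p_0$.

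\textbf{Step 1: a jet-level sufficient condition.} Fix an order $r$ and write the jet of $\omega-\omega(p_0)$ at $p_0$ as
\[
\omega(p_0+x)-\omega(p_0)=\sum_{j=1}^{r}P_j(x)+O(\|x\|^{r+1}),
\]
with each $P_j$ homogeneous of degree $j$. Say the $r$-jet is \emph{non-degenerate} if the following holds: for every $s$, every $s$-dimensional subspace $M$ (a point of the Grassmannian $G(s,n)$) and every unit $u\in M$, the polynomial curve $\eta\mapsto \Pi_M\sum_{j\le r}P_j(u)\eta^j$ is not identically zero. By compactness of the unit sphere bundle over $G(s,n)$, non-degeneracy gives a lower bound: some coefficient $\Pi_M P_j(u)$ with $j\le r$ has modulus at least $c>0$, uniformly in $(M,u)$.

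\textbf{Step 2: from the jet condition to P-steepness near $p_0$.} A polynomial of degree $r$ in $\eta$ with a coefficient of size at least $c$ satisfies $\max_{\eta\in(0,\xi]}|\cdot|\ge C\,c\,\xi^{r}$; this is the Markov/Remez-type inequality for polynomials on an interval. Because the Taylor remainder is $O(\xi^{r+1})$, the same lower bound survives for small $\xi$. The quantifier order in \eqref{p.steep.p0} is $\max_\eta\min_u$, so a lower bound on $\max_\eta$ for each fixed $u$ is not enough. I would handle this as Nekhoroshev does, with a curve-selection / semialgebraic argument, or by noting that the set of $(M,u,\eta)$ where the projected polynomial is small is semialgebraic; Łojasiewicz-type inequalities then produce an exponent $\alpha_s$, possibly larger than $r$. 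This quantifier issue is the main obstacle. Finally, the same jet condition is open in the coefficients, so it holds with uniform constants at all $p$ near $p_0$, which gives P-steepness on a neighborhood.

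\textbf{Step 3: counting equations.} By contraposition, failure of P-steepness in every neighborhood forces the $r$-jet to be degenerate for every $r$. The degenerate set $\Sigma_r$ is the projection along $G(s,n)\times$sphere of an algebraic incidence variety. Since the Grassmannian is a projective variety, elimination theory shows $\Sigma_r$ is algebraic in jet space. Its codimension is at least (number of equations) minus (dimension of the parameters $M,u$): there are $\sim s\,r$ equations against a bounded number of parameters, so the codimension tends to $\infty$ as $r$ grows. The defining equations at successive orders $r$ involve new Taylor coefficients, so they are independent. This yields infinitely many independent algebraic equations, as the statement claims.
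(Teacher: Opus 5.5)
The paper gives no proof of this statement. It quotes it from \cite{Nek1} and points to the modern proof of the analogous result for ordinary steepness in \cite{Barbieri2025}, which uses real-algebraic geometry and the analytic Yomdin--Gromov parametrization and is said to adapt to P-steepness. Your overall architecture is the right one: contraposition through $r$-jets, with a bad set whose codimension tends to infinity. But there is a genuine gap, and it comes before the quantifier issue you flag. The jet condition of Step 1 is the wrong condition. It only excludes vanishing of $\Pi_M(\omega(p_0+u\eta)-\omega(p_0))$ along straight rays. By contrast, \eqref{p.steep.p0} takes, for each $\eta$, the minimum over the whole sphere of radius $\eta$ in $p_0+M$. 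For $s\ge 2$ that minimum is destroyed by any \emph{curved} zero set through $p_0$.

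Here is a counterexample. Take $n=2$, $h(p)=\tfrac12(p_2-p_1^2)^2$ and $p_0=0$. Then $\omega(p)=(p_2-p_1^2)(-2p_1,1)$, with $P_1(x)=(0,x_2)$, $P_2(x)=(-2x_1x_2,-x_1^2)$ and $P_3(x)=(2x_1^3,0)$.
\begin{itemize}
\item For $M=\R^2$, the coefficients $P_1(u),P_2(u),P_3(u)$ cannot all vanish on the unit circle.
\item For $M=\mathrm{span}(u)$, the projected coefficients are $u_2^2$, $-3u_1^2u_2$ and $2u_1^4$, which are never all zero.
\end{itemize}
So the $3$-jet is non-degenerate in your sense, with a uniform $c>0$. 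Yet $\omega$ vanishes on the parabola $p_2=p_1^2$, which meets every circle centred at $0$. Hence $\min_{\|u\|=1}\|\omega(u\eta)-\omega(0)\|=0$ for every $\eta>0$, and $h$ is not P-steep at $0$, nor at any point of the parabola. The implication in Step 2 is therefore false. No curve-selection or \L{}ojasiewicz argument can rescue it unless the condition itself is changed.

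A workable jet condition must require that $0$ be an isolated zero of the truncated map $x\mapsto\Pi_M\sum_{j\le r}P_j(x)$ on $M$. It also needs a lower bound $\gtrsim\|x\|^{\rho}$ with $\rho<r+1$, uniform in $M$ and in nearby base points. The restriction $\rho<r+1$ is essential: otherwise the $O(\|x\|^{r+1})$ remainder can create a zero curve. The \L{}ojasiewicz exponent of a truncation is not automatically below $r+1$, so your ``possibly larger than $r$'' is exactly the dangerous case.

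With such a condition, Step 3 no longer works as written. Degeneracy now involves the existence of $M$ and of an $r$-jet of a curve $x(\eta)=\sum_{i\le r}a_i\eta^i$, $a_i\in M$, along which the truncated projected gradient vanishes to order $r$. The auxiliary unknowns $a_i$ number $sr$, as many as the equations, so ``equations minus a bounded number of parameters'' gives no codimension at all. A codimension growing with $r$ needs a finer count, which is where the real content of Nekhoroshev's theorem lies, or of the semi-algebraic machinery in \cite{Barbieri2025}. That count has to:
\begin{itemize}
\item quotient by reparametrizations of the curve;
\item treat singular branches;
\item relate approximate zero curves of the jet to its \L{}ojasiewicz exponent.
\end{itemize}

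A smaller point: over $\R$ the projection of an algebraic incidence set is only semi-algebraic (Tarski--Seidenberg). Obtaining genuinely algebraic equations, and their independence across $r$, therefore needs an argument rather than an assertion.
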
}

For standard Steepness one can find a modern demonstration in
\cite{Barbieri2025}. The proof and the results of \cite{Barbieri2025}
can be easily adapted to the P-steep case. The arguments that are
used there essentially rely on real-algebraic geometry and complex
analysis. In particular, the core element is the analytic version of a
theorem due to Yomdin and Gromov \cite{Gro_1987,Yom_2008, Bur_2008}
which ensures that, for any semi-algebraic set{\footnote{Broadly
  speaking, a semi-algebraic set of $\R^n$ is determined by a finite
  number of polynomial equalities and inequalities}}, there exists a
collection of $C^k$-mappings which parametrize the considered set.

We also remark that the original definition of Steepness is quite involved,
but for real-analytic functions the following equivalent
characterization based on arguments of semi-analytic geometry holds:
$h$ is steep iff it has no critical points and if its restriction to any
proper affine subspace admits only isolated critical points
\cite{Niederman}. We think that an analogous characterization should
hold also for P-steep functions. {Ultimately, therefore, we make the last following}

	\medskip 
	
	\noindent{\bf Hypothesis 3:} $h_0$ is P-steep on $\cU_{\sigma/2}^\R$. 
	
	\medskip

	After these premises, we are ready to state our main result
	
	\begin{theorem}
		\label{main}
		Under the Hypotheses 1, 2 and 3 above, there exist strictly positive finite
		constants $\varepsilon_*$, $a$, $b$, $C$, $C_*$ such that if
		$0\leq\varepsilon<\varepsilon_*$, then for any initial datum
		$(p_0,q_0)\in\cU\times\torus^n$, one has 
		\begin{equation}
			\label{main.esti}
			\left\| p(t)-p_0\right\|\leq C
			\varepsilon^{b} \quad \text{for any }\quad 
			|t|\leq \frac{1}{C {\varepsilon^{\frac 1 2}}}
			\exp\left(\frac{C_*}{\varepsilon^{a}}\right)\ .
		\end{equation}
		Furthermore, defining
		\begin{equation}
			\label{tp}
			\tp:=\prod_{i=1}^{n}\alpha_i\ ,\quad \tp_1:=\prod_{i=1}^{n-1}\alpha_i
		\end{equation}
		one can take
		\begin{equation}
			\label{le.costanti}
			a=\frac{1}{2\tp_1  (n+1) (\tau+ 1) }\ ,\quad
			b{=\frac{1}{2\tp}\left(
				1-\frac{n}{   (n+1) (\tau+ 1)}\right)}\ \ .  
		\end{equation}
	\end{theorem}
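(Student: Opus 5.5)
The plan is to follow the Guzzo--Chierchia--Benettin scheme \cite{GCB}, adapted to the time quasiperiodic case, in three stages: a geometric covering of action space by resonant zones, an analytic normal form in each zone, and a dynamical confinement argument.

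We fix a cutoff $K\sim\varepsilon^{-a}$ and work with the small divisors $\omega(p)\cdot k+\nu\cdot l$, with $\|(k,l)\|\le K$. For every maximal sublattice $\Lambda\subset\Z^{n+m}$ generated by such vectors and with $\pi_k(\Lambda)$ of rank $s$, we construct the center $\ombar_\Lambda$ satisfying \eqref{frequency.space}. To do so, we solve $\ombar\cdot k=-\nu\cdot l$ on a basis of $\Lambda$, choosing $\ombar$ in the span of the $k$-components.

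The first key lemma is a separation estimate. Using Hypothesis 1, two distinct centers of order $s$ with $\|(k,l)\|\le K$ are at distance at least $\gtrsim \tC K^{-(\tau+1)}$ times some power of $K$, which comes from the lattice determinants. The reason is that $\nu\cdot(l-l')$ cannot be small unless $\|l-l'\|$ is large. This lets us localize: in a ball of radius $\sim K^{-(\tau+1)}$ around each center, only resonances sharing that center matter. The geometry there is a translate of the time-independent geometry, with $\omega-\ombar$ playing the role of $\omega$.

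Next, we define resonant zones $Z_\Lambda$ and blocks with thresholds $\delta_1<\dots<\delta_n$, in the style of \cite{GCB}. We use P-steepness (Hypothesis 3) applied to $\Pi_M(\omega(p)-\ombar)$, with $M=\mathrm{span}\,\pi_k\Lambda$, to prove that the fast drift plane intersected with the zone has diameter at most $(\delta_s/D_s)^{1/\alpha_s}$. This is exactly where P-steepness rather than steepness is needed, because $\omega(p_0)$ need not be orthogonal to $M$ once $\ombar\ne0$.

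In each nonresonant block we apply the analytic lemma of Pöschel \cite{poe} in the extended phase space with angles $(q,\phi)$. We remove all harmonics $(k,l)\notin\Lambda$ with $\|(k,l)\|\le K$, using lower bounds $|\omega\cdot k+\nu\cdot l|\ge\delta_{s+1}$ on the block. This leaves a remainder of size $\varepsilon e^{-K\sigma/6}$. The normal form then depends on angles only through $\Lambda$, so $\dot p$ lies in $\pi_k\Lambda$ up to the exponentially small remainder.

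The dynamical argument is Nekhoroshev's. A solution starting in a block of order $s$ moves, over the exponential time, within the plane $p_0+\pi_k\Lambda$, up to deformation by the normal form change, of size $\sim\varepsilon/\delta_{s+1}$. It can only exit the zone by entering a zone of strictly lower order. Since the order can decrease at most $n$ times, the total drift is bounded by a sum of steepness radii, and this sum gives $C\varepsilon^b$.

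Finally, we balance the parameters. We need $\delta_s$ increasing geometrically in powers of $\varepsilon$, with $K^{n(\tau+1)}\delta$ compatibility from the separation lemma, and remainders small. Optimizing these constraints should produce $a=1/(2\tp_1(n+1)(\tau+1))$ and $b$ as in \eqref{le.costanti}. The factor $\varepsilon^{-1/2}$ in the time comes from the usual $\varepsilon^{1/2}$ width of the innermost zone.

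The main obstacle is the geometric step. Zones attached to different centers $\ombar,\ombar'$, and to resonances of different orders passing near several centers, must not overlap in a way that breaks the order-decreasing mechanism. Concretely, we must show that a point near a high-order center but far from a lower-order plane through another center is still correctly classified. The plan is to choose the radii of the center balls smaller than half the separation bound, and to treat each ball as a copy of the time-independent construction. We must then check that, outside all balls, only the single-sublattice case occurs at each order. This is where the Diophantine exponent $\tau$ enters $a$ and $b$.
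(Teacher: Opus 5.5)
There is a genuine gap in your geometric step, which you yourself flag as the main obstacle. Localizing in balls of radius $\sim K^{-(\tau+1)}$ around centers, treating each ball as a copy of the autonomous construction, and asserting that outside all balls only single resonances occur does not work.

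A resonant plane $P_\Lambda=\ombar_\Lambda+\tilde\Lambda_\R^{\perp}$ is unbounded, and its center is only the foot of the perpendicular from the origin, so multiple resonances are not concentrated near centers. For $n\ge 3$, take $\Lambda_1=\langle(e_1,l_1)\rangle$ and $\Lambda_2=\langle(e_2,l_2)\rangle$. The points $(-\nu\cdot l_1,-\nu\cdot l_2,t)$ with $t$ of order one are exactly doubly resonant. Yet they lie outside every ball of radius $K^{-c}$ around the polynomially many centers, so your claim that only single-sublattice resonances occur outside the balls is false. Conceptually, the GCB geometry is also not a construction near $\omega=0$: for steep $h_0$ the frequency stays away from $0$. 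What matters is the family of planes through a given point, globally, so there is nothing local to copy.

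What is missing is the paper's global treatment, which has three ingredients.

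\textbf{Global zones.} The paper defines $Z^{(s)}_\Lambda=\{\|\Pi_{\tilde\Lambda}(\omega(p)-\ombar_\Lambda)\|<C_s/(K^{A_s}|\tilde\Lambda|)\}$ and blocks $Z^{(s)}_\Lambda\setminus Z^{(s+1)}$ on the whole domain. It identifies the only genuinely new phenomenon as distinct $\Lambda\neq\Lambda'$ with $\tilde\Lambda_\R=\tilde\Lambda'_\R$, i.e.\ parallel resonant planes, which cannot occur in the autonomous case.

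\textbf{A small-divisor case your proposal skips.} For $(k,l)\notin\Lambda$ with $k\in\tilde\Lambda_\R$, the module generated by $\Lambda$ and $(k,l)$ contains some $(0,l^*)\neq 0$. It is therefore not admissible, and $p\notin Z^{(s+1)}$ gives no lower bound. Your claim that $|\omega\cdot k+\nu\cdot l|\ge\delta_{s+1}$ holds on the block fails exactly for these harmonics. The paper handles them by Cramer's rule: $|\Delta|\,|\nu\cdot l^*|\ge \Gamma\,((s+1)K^{s+1})^{-\tau}$. This yields both a lower bound on all of $Z^{(s)}_\Lambda$ and the separation of parallel zones, provided $A_s\ge(s+1)\tau+1$. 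Non-parallel overlaps are then excluded GCB-style, using $p_0\notin Z^{(s+1)}_{\Lambda^+}$.

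\textbf{Exponents.} Extending the block estimate to the fast drift block through the P-steep diameter bound forces the recursion $B_s=\alpha_sB_{s+1}$, where $B_s:=A_s+s$. With $A_n=(n+1)\tau+1$ one gets $B_n=(n+1)(\tau+1)$ and $B_1=\tp_1B_n$. Taking $\beta=1/(2B_1)$ gives $a$, and $b=\beta A_n/\alpha_n$ gives the stated $b$. So the product $(n+1)(\tau+1)$ comes from this Diophantine constraint at $s=n$, not from a ball radius, and your ``optimizing should produce'' does not derive it. Likewise, the $\varepsilon^{-1/2}$ prefactor is $d_0/\varepsilon$ with $d_0\sim K^{-B_1}\sim\varepsilon^{1/2}$.
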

	
	We add a few comments.
	
As we already emphasized, while exponential stability is well known in
the case of time independent perturbations and generic unperturbed
system, for the case of time quasiperiodic perturbations it was only
known when the unperturbed Hamiltonian is convex, which is a non
generic property. Here we extend the result to the case of unperturbed
generic system.
	
{We remark that, exactly as the result by Nekhoroshev for the time
  periodic case and the result by Bounemoura for the quasiperiodic
  convex case, our result can be extended to the case of Hamiltonians
  of the form
		\begin{equation}
			\label{molto.estese}
			h_0(p)+\nu\cdot J+ \varepsilon V(p,J,\phi,\psi)\ ,
		\end{equation}
in which one extends the phase space and the perturbation also depends
on the auxiliary actions. More precisely, our proof can be adapted
with minor changes to this case. The main point is that the geometric
construction has to be done only in the space of the actions $p$.}
{For the interest of such an extension see \cite{BouTime}.}

We discuss now the value of the {\it stability exponents} $a$ and
$b$. We recall that the exponent $a$ controls the stability times of
the system and for this reason its value has been the object of a
quite intense investigation at least for the time independent
case. The original proof by Nekhoroshev gave a value which in the
convex case reduces to $a\simeq n^{-2}$. Lochak's proof \cite{Loc92}
allowed to improve considerably the exponent in the quasiconvex case
leading to a value of $a=(2n)^{-1}$ (see \cite{LocNei}). Such an
exponent was obtained also through arguments close to Nekhoroshev's
ones, but only in the quasiconvex case by P\"oschel \cite{poe}. A
similar value has been obtained also in some infinite dimensional
models like an infinite chain of coupled rotators or in some nonlinear
PDEs in \cite{BG93,Bam99,BG24}, but in this case $n$ is
the number of degrees of freedom actually excited in the initial
datum.
	
	For the non convex (and also non quasiconvex) case, in order to get a
	better exponent one has to wait until the paper \cite{GCB} in which
	Guzzo Chierchia and Benettin modified the proof by Nekhoroshev
	obtaining for the steep case an exponent $a=1/(2n\alpha_1\dots\alpha_{n-2})$.
	
	We remark that in the convex case the best known exponent, which is
	considered essentially optimal, has been obtained by Zhang and Zhang
	\cite{Zhang2011,ZZ17} and it is given by
	\begin{equation}
		\label{zhang}
		a=\frac{1}{2(n-2)}-\mu\ ,
	\end{equation}
	with $\mu$ an arbitrary small number.
	
	We also recall that Nekhoroshev type results hold also in the case of
	less regular systems \cite{MS03, BouCk, Bou_preva, nek_noi} and even in the case of H\"older
	perturbations \cite{BarMarMas}, but the stability times that one gets are
	only polynomial in $\varepsilon^{-1}$. 
	
Returning to the time-dependent case, the story begins with a
conjecture originally formulated by Chirikov \cite{Chi79} and later made precise in
\cite{LMS03} according to which, in the convex case, provided $\nu$ is
Diophantine of type $(\Gamma,\tau)$ (cfr. \eqref{eq:nuDiophantine}),
one should have
	\begin{equation}
		\label{chiloc}
		a=\frac{1}{2(n+1+\tau)}\ .
	\end{equation}
For the quasiperiodic convex case Bounemoura \cite{BouTime} got  {exponents}
	\begin{equation}
		\label{boune}
		a=\frac{1}{2(n+1)(\tau+1)},\quad  b=\frac{1}{2}
		\left(1-\frac{n}{(n+1)(\tau+1)}\right)\ .
	\end{equation}
As Bounemoura observed, while $a$ is quite far from the value
conjectured by \cite{LMS03}, the exponent $b$ is much better
than what one cloud expect. Heuristically speaking, this is due to the
fact that in the quasiperiodic case one adds ``more frequencies" and
therefore ``more resonances" to the system. Indeed, as Nekhoroshev
stability results essentially works by preventing resonances of the
same order to overlap (see e.g. the discussions in \cite{GCB}), this
means that the resonant zones must be ``smaller" than in the
time-independent case, and this reflects in having a confinement in a
smaller neighborhood. This, in turn, has a consequence on the time of
stability, that must be shorter.

	The exponents \eqref{le.costanti} that we get reduce
	exactly to those of Bounemoura in the convex case.  Actually we do not
	know if such exponents are optimal or not, but we are convinced that a
	technique based on the analysis of the geometry of resonances in the
	action space would not lead to the exponents \eqref{chiloc}, since the
	structure in which the product of $n$ and $\tau$ appears is essentially
	due to the fact that we have to estimate from below the distance among
	different centers  {$\ombar$ (cfr. Eq. \eqref{frequency.space})}, which is unavoidable in our construction. A possible idea in order to reach the exponents predicted by Chirikov is explained in the paper \cite{BouTime} by Bounemoura and amounts to giving a proof of Nekhoroshev's theorem based on badly approximable vectors. However, no result in this direction currently exists in the literature.

	Finally we add a comment on 
	P-Steepness: the fact that the resonant planes intersect also at
	planes not passing through the origin is the reason why we need
	P-Steepness instead of Steepness. Indeed, Steepness amounts
	essentially to Definition \ref{def:p.steep} in which
	\eqref{p.steep.p0} is substituted by
	\begin{equation*}
		\max_{\eta \in (0, \xi]}\, \min_{\|u\| = 1\,, u \in M}
		\left\|\Pi_M \omega (p_0 + u \eta) \right\|
		\geq D_s \xi^{\alpha_s}\ , \quad \forall \xi \in (0, r)\,
	\end{equation*}
	and $h_0$ is assumed not to have any critical point.

	\section{Geometric construction}\label{geometric}

	In this section we define a partition of the action space $\cUr$ into
	blocks which are essentially left invariant by the flow of a
	Hamiltonian in normal form.
	
	\subsection{Preliminaries}
	
	Following Bounemoura we give the following definition. Let $K$ be a
	large parameter whose value will be fixed in Sect.\ref{analytic}.
	\begin{definition}\label{def:resonantmodule}
		A module $\Lambda\subset\Z^{n+m}$ is said to be a $K$-submodule if it
		is generated  {over the integers} by elements $(k,l)\in\Z^n\times\Z^m$ such that  {$\|k\| + \|\ell\| \leq K$,}
		 and it is said to be a  {$K$-maximal submodule if it is
		not properly contained in any other $K$-submodule of the same
		dimension. A $K-$maximal submodule} $\Lambda$ is said to be admissible
		if its intersection with $\left\{0\right\}\times\Z^m$
                is {the trivial set $\{(0,0)\}$.}
		
		The set of admissible  {$K$-maximal} submodules with rank
		 $s$ will be denoted by $\cM^{(s)}_K$.
	\end{definition}
	
	 {Let $\Lambda\in\cM^{(s)}_K$, then it admits a basis (as a modulus) $\{v_j\}_{j=1}^s$ {that decomposes in $v_j = (k_j,  l_j) \in \Z^n \times \Z^m$ and satisfies $\|k_j\|+\|l_j\|\leq K$.}  Then, by admissibility, the vectors
	$\left\{k_j\right\}_{j=1}^s$ are {linearly} independent and generate a submodule
	of $\Z^n$ of rank $s$, that will be denoted by $\tilde \Lambda$.}  {Note that this also immediately implies that $0 \leq s \leq n$.}
	
	 {Let furthermore $\cK = [k_1 \dots k_s]$ be the matrix with
           the components of the vectors $k_j$ as columns, then $\cK^t
           \cK$ is the Gram Matrix of the lattice generated by the
           vectors $k_j$ and we denote
	\begin{equation}
		\label{volume}
		\left|\tilde\Lambda\right|:=\sqrt{\det(\cK^t \cK)}\,, 
	\end{equation}
which is the volume of the fundamental parallelepiped of the lattice
generated by $k_1, \dots, k_s$.

	{From now on, we will denote by
	$\tilde\Lambda_\R:=span_\R\left\{k_1,...,k_s\right\}$ the plane
	generated by the real linear combinations of the generators $k_1,\dots,k_s$, and we will indicate by
	$\Pi_{\tilde\Lambda}$ the orthogonal projector onto $\tilde\Lambda_{{\R}}$.}

	\subsection{Resonant planes and their centers}\label{centers}

	\begin{definition}\label{def:risonanza_esatta}
		Let $s\in\{1,..,n\}$ and $\Lambda\in\cM^{(s)}_K$. {The 
		\textbf{resonant plane} associated to $\Lambda$ is the hyperplane}
		\[
		P_\Lambda = \claus{\omega\in \reals^n \st
			\omega\cdot k + \nu \cdot l = 0, \, \forall \ (k,l)\in\Lambda}.
		\]
	\end{definition}
	
	{Now, consider} $\Lambda\in\cM^{(s)}_K$, and let $v_j\equiv(k_j,l_j)$, $j=1,...,s$ be a  {set of linearly independent vectors in} $\Lambda$.
	Then we consider the basis $u_j$ of $\tilde\Lambda_{\R}$ dual to
	$(k_1,...,k_s)$, which is
	defined by
	\begin{equation}
		\label{def.dual}
		k_i\cdot u_j=\delta_{ij}\ ,\quad  \forall i,j=1,...,s\ ,\quad u_j\in
		\tilde\Lambda_{\R}\ .
	\end{equation}
	We use such a basis to define
	\begin{equation}
		\label{defom.bar}
		\ombar\equiv \ombar_{\Lambda} := -\sum_{j=1}^s (\nu
		\cdot l_j)u_j .
	\end{equation}
        {
          \begin{lemma}
          \label{basis}
The vector $\ombar_{\Lambda}$ is independent of the basis of $\Lambda$.
        \end{lemma}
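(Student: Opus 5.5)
The plan is to describe $\ombar_\Lambda$ intrinsically, by conditions that mention no basis. I will show that $\ombar_\Lambda$ is the unique vector $w\in\tilde\Lambda_\R$ satisfying
\begin{equation*}
w\cdot k = \nu\cdot l \qquad \forall\,(k,l)\in\Lambda .
\end{equation*}
Since this description involves only $\Lambda$ and $\tilde\Lambda_\R$, the lemma follows immediately. This is also consistent with \eqref{frequency.space}: for $(k,l)\in\Lambda$ we get $\omega\cdot k+\nu\cdot l=(\omega-\ombar)\cdot k$, up to the sign convention in \eqref{defom.bar}.

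First I settle the sign. From \eqref{def.dual} and \eqref{defom.bar},
\[
\ombar\cdot k_i=-\sum_j(\nu\cdot l_j)\,\delta_{ij}=-\nu\cdot l_i .
\]
So the natural characterization is $\ombar\cdot k=-\nu\cdot l$ for all $(k,l)\in\Lambda$, together with $\ombar\in\tilde\Lambda_\R$. I will use this version, and the argument is the same with either sign.

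Existence and uniqueness go as follows.

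\textbf{Membership.} $\ombar\in\tilde\Lambda_\R$ because every $u_j$ lies in $\tilde\Lambda_\R$.

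\textbf{Extension to all of $\Lambda$.} Take any $(k,l)\in\Lambda$ and write it as a combination of the chosen vectors, $(k,l)=\sum_i c_i(k_i,l_i)$. If $\{v_i\}$ is an integer basis the coefficients are integers. If the $v_i$ are merely linearly independent (rank $s$), the coefficients are rational. In either case the identity $\ombar\cdot k_i=-\nu\cdot l_i$ extends by linearity to $\ombar\cdot k=-\nu\cdot l$.

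The one delicate point is making sure the $l$-component is determined consistently. The coefficients $c_i$ are fixed by $k$ alone, because admissibility makes $k_1,\dots,k_s$ linearly independent. Hence $l=\sum_i c_i l_i$ is forced. Equivalently, if $(k,l),(k,l')\in\Lambda$, then $(0,l-l')\in\Lambda\cap(\{0\}\times\Z^m)=\{(0,0)\}$. Without admissibility, $\nu\cdot l$ would not be a function of $k$, and this is exactly where the hypothesis is used.

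\textbf{Uniqueness.} Suppose $w,w'\in\tilde\Lambda_\R$ both satisfy the condition. Then $w-w'$ is orthogonal to every $k_i$, and therefore to all of $\tilde\Lambda_\R$. Since $w-w'$ also lies in $\tilde\Lambda_\R$, it must vanish.

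Now take two bases (or two families of $s$ independent vectors) of $\Lambda$. They produce vectors $\ombar$ and $\ombar'$. Both lie in $\tilde\Lambda_\R$: the $k$-parts of any such family span the same real space $\tilde\Lambda_\R$, since that space is the projection of $\Lambda\otimes\R$. Both satisfy the characterization. By uniqueness, $\ombar=\ombar'$.

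I expect no real obstacle. The only step requiring care is the extension step, where admissibility is what makes $\nu\cdot l$ a well-defined linear function of $k$ on $\tilde\Lambda$.
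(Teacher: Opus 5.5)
Your proof is correct and follows essentially the same route as the paper: the paper checks by linear combination that $\ombar$ built from one basis satisfies $\ombar\cdot k'_j=-\nu\cdot l'_j$ for any other basis $\{(k'_j,l'_j)\}$, and it leaves implicit the uniqueness step you make explicit (a vector of $\tilde\Lambda_\R$ is determined by its scalar products with a basis of $\tilde\Lambda_\R$). Two minor remarks: your first display has the wrong sign, which you correct immediately; and admissibility is not what makes the extension step work, since writing $(k,l)=\sum_i c_i(k_i,l_i)$ already gives $l=\sum_i c_i l_i$ â€” admissibility is needed earlier, to make the $k_i$ independent so that the dual basis, and hence $\ombar$, exists.
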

        }
{\proof Let $v'_1 = (k'_1, l'_1), \dots, v'_s = (k'_s, l'_s)$ be a
basis of $\Lambda$ different from the one used to construct
$\ombar_\Lambda$, then there are  coefficients $\{\alpha_{i,j}\}_{i, j = 1}^s$  such that $v'_j = \sum_{i} \alpha_{i,j} v_i$, then a direct computation {yields}
		$$
		\ombar \cdot k'_j = \sum_{i=1}^s \alpha_{i,j} \ombar \cdot k_i = -\nu \cdot \sum_{i=1}^s \alpha_{i,j} l_i = -\nu \cdot l'_j\,, \quad \forall j = 1 \dots s\,.
		$$}
\qed

	One has the
	fundamental property 
	\begin{equation}
		\label{def.om.1}
		\omega \in P_\Lambda
		\quad\iff\quad  
		\Pi_{\tilde\Lambda}(\omega - \ombar)= 0\ .
	\end{equation}
	
	\begin{definition}
		\label{def.centre}
		The vector $\ombar_\Lambda$ will be called the {\bf
			center of the resonant plane $P_\Lambda$.}
	\end{definition}

	\subsection{Resonant zones}\label{rzone}
	
	In this section we construct a covering of $\cUr$ labeled by the
	resonance moduli $\Lambda$. {The covering will depend on}
{suitable}  sets of real positive parameters
	$\left\{A_s\right\}_{s=0}^n$, $\left\{C_s\right\}_{s=0}^n$,
	$\left\{d_s\right\}_{s=0}^n$ on which we will impose several
	conditions along the section. {The relationship among their elements will be fixed
	in Subsection \ref{le.costanti.s}, while their values will be
	set in Section \ref{analytic} so to obtain the best exponents
	in the stability estimates.}

	From now on, we assume that $h_0$ in \eqref{hamiltoniana} is
        P-steep on $\cUr$
{and that, without any loss of generality, its coefficients satisfy $D_1, \dots, D_n \in (0, 1)$.}
    {We also set}
	\begin{equation}\label{sup.omega}
	\cC_\omega := \sup_{p \in \cUr} \|\partial^2_p h_0 (p)\|\,.
	\end{equation}

	The next definition we give is meant to identify the points $p$ which
	are in resonance with  {vectors in} a module in $\cM^{(s)}_K$.

	\begin{definition}
		Let $K\geq 1$, $s\in\{1,..,n\}$ and $\Lambda \in \MM_K^{(s)}$.
		Then we define the \textbf{resonant zone} of module $\Lambda$ as
		\begin{equation}\label{beta.zeta}
			Z_{\Lambda}^{(s)} :=
			\left\lbrace p\in\cUr \ |\ \norm{\Pi_{\tilde \Lambda} (\omega(p) - \ombar_{\Lambda})}< \beta^{(s)}_{\Lambda}\right \rbrace\,, \quad \beta^{(s)}_\Lambda := \frac{C_s}{K^{A_s}|\tilde\Lambda|}\,.
		\end{equation}
		Moreover, we define
		\begin{align}
			Z^{(s)}:=\bigcup_{\Lambda\in\cM^{(s)}_K}Z^{(s)}_\Lambda\ .
		\end{align}
	\end{definition}

	Lastly, to obtain a covering of $\cUr$, we define the non-resonant regions:
	\begin{align*}
		Z^{(0)}:= 
		\cUr \setminus Z^{(1)}
		\,.
	\end{align*}


{We now define the resonant blocks which contain the points which
  are resonant with only one module.}
	
	\begin{definition}
		Let $K\geq 1$, $s\in\{1,..,n\}$ and $\Lambda \in \MM_K^{(s)}$.
		We define the \textbf{resonant block} of modulus $\Lambda$ as
		\begin{align*}
			& B_{\Lambda }^{(s)} := Z_\Lambda ^{(s)} \setminus
			Z^{(s+1)}\ ,\quad s=1,...,n-1
			\\
			& B^{(n)}_\Lambda :=Z^{(n)}_\Lambda \ .
		\end{align*}
		If $s=0$, we also define $B^{(0)} := Z^{(0)}$.
	\end{definition}
	
{Finally, we introduce another kind of sets that will turn out to be relevant when studying the dynamics.}
	
	\begin{definition}
		\label{def:fastDriftBlock}
		Let $\Lambda\in\cM^{(s)}_K$, with $s\in\left\{1,...,n\right\}$
		then, for $p \in B_{\Lambda}^{(s)}$, we define its \textbf{fast drift block} as
		\begin{equation}\label{def:salsicciotti}
			F_{\Lambda}^{(s)}(p) = \left[\claus{p + \tilde\Lambda_{\reals}}_{d_s} \cap Z_{\Lambda}^{(s)}\cap\R^n \right]^{p}\,,
		\end{equation}
		where $\cA+\cB$ is the Minkowski sum between sets, namely $\cA+\cB =
		\claus{a+b \st a\in \cA, \, b\in \cB}$, $[\cA]^p$ is the connected
		component of the set $\cA$ containing $p$, while as in
		Sect. \ref{set},  {$\cA_r$ is the union {of the complex}
		balls of radius $r$ centered at the points $x \in \cA$. }
		\\
		If $p \in B^{(0)}=Z^{(0)}$, we define
                \begin{equation}
                  \label{F.0}
		F^{(0)}(p) = \mathcal{B}_{d_{{0}}}(p)\cap\cUr\,.
                  \end{equation}
	\end{definition}

	\begin{figure}
		\centering
		\begin{overpic}[scale=0.45]{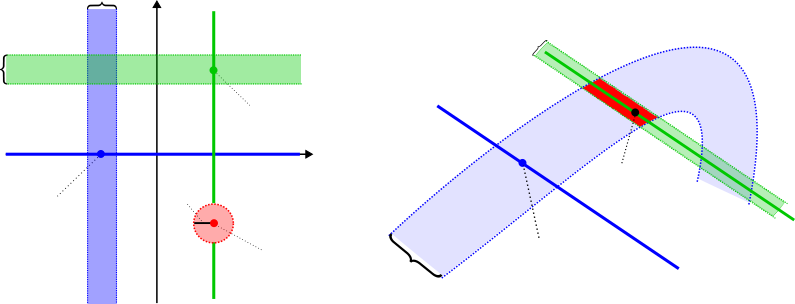}
			\put(4,13){\tiny \color{blue} $\overline{\omega}_{\Lambda_1}$}
			\put(11,39.5){\tiny $\beta^{(1)}_{\Lambda_1}$}
			\put(-5,18){\small \color{blue}$\tilde\Lambda_{1,\reals}$}
			\put(5,22){\small \color{blue} $Z^{(1)}_{\Lambda_1}$}
			\put(31.5,23.5){\tiny \color{darkGreen} $\overline{\omega}_{\Lambda_2}$}
			\put(-4,29.5){\tiny $\beta^{(1)}_{\Lambda_2}$}
			\put(24,38){\small \color{darkGreen}$\tilde\Lambda_{2,\reals}$}
			\put(32,33){\small \color{darkGreen} $Z^{(1)}_{\Lambda_2}$}
			\put(33,5){\tiny \color{red} $\overline{\omega}_{\Lambda_3}$}
			\put(20,14){\tiny $\beta^{(2)}_{\Lambda_3}$}
			\put(28,13){\small \color{red} $Z^{(2)}_{\Lambda_3}$}
			\put(48,3){\tiny $\beta^{(1)}_{\Lambda}$}
			\put(86,3){\small \color{blue} $\tilde{\Lambda}_{\reals}$}
			\put(67,6.5){\tiny \color{blue} $\overline{\omega}_{\Lambda}$}
			\put(65,33){\tiny  $d_1$}
			\put(77,16){\small $p$}
			\put(88,34){\small \color{blue} $Z^{(1)}_{\Lambda}$}
			\put(100,8){\small \color{darkGreen} $p+\tilde{\Lambda}_{\reals}$}
			\put(79,27){\small \color{red} $F^{(1)}_{\Lambda}(p)$}
		\end{overpic}
		\caption{Left: Representation {in the case $n=2$,
                    $m=1$} on the $\omega(p)$-plane of the resonant zones of module $\Lambda_1 = span_{\mathbb{Z}}\{(1,0;1)\}$, $\Lambda_2 = span_{\mathbb{Z}}\{(0,1;-1)\}$, $\Lambda_3 = span_{\mathbb{Z}}\{(1,0;-1),(0,1;1)\}$. Right: Representation on the $p$-plane of the fast drift plane $F_{\Lambda}^{(1)}(p)$, in red.}
	\end{figure}
	
	\begin{remark}
		It is immediate to see that resonant blocks and fast drift
		blocks are a covering of $\cUr$. Indeed, resonant zones are a
		covering, and by construction 
		$$
		B^{(0)}\bigcup\left(	\bigcup_{s = 1}^{n} \bigcup_{\Lambda\in\cM^{(s)}_K}   B^{(s)}_{\Lambda}\right) = B^{(0)}\bigcup\left(\bigcup_{s = 1}^{n} \bigcup_{\Lambda\in\cM^{(s)}_K}  Z^{(s)}_{\Lambda}\right)\,, 
		$$
		{so that} resonant blocks are still a covering. Since for any $s$,
		for any $\Lambda\in\cM^{(s)}_K$
		$$
		B^{(s)}_{\Lambda} \subset \bigcup_{p \in B^{(s)}_{\Lambda}} F^{(s)}_{\Lambda}(p)\,,
		$$
		one also has that {also} the fast drift blocks are a covering. 
	\end{remark}
	
	\begin{lemma}
		\label{lemma:propertiesResonantZones}
		For any $s\in\{0,..,n-1\}$, the following inclusion holds:
		\begin{equation}
			\label{inclusive}
			\cUr \setminus Z^{(s+1)} \subseteq \bigcup_{\substack{\Lambda' \in \MM_K^{(s')} \\ 0\leq  s'\leq s}} B_{\Lambda'}^{(s')}\,.
		\end{equation}
	\end{lemma}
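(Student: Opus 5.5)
Fix $p\in\cUr\setminus Z^{(s+1)}$. The plan is a downward induction on the resonance order: we look for the largest $s'\le s$ such that $p$ lies in some resonant zone of order $s'$, and we show that $p$ then belongs to the corresponding block. Set
\[
S(p):=\{\,s''\in\{1,\dots,s\}\st p\in Z^{(s'')}\,\}.
\]

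\emph{Case $S(p)=\emptyset$.} Here $p\notin Z^{(1)}$. If $s\ge 1$ this is the definition of $B^{(0)}=Z^{(0)}=\cUr\setminus Z^{(1)}$. If $s=0$, the hypothesis reads $p\notin Z^{(1)}$ directly. Either way $p\in B^{(0)}$, which is the term $s'=0$ on the right-hand side of \eqref{inclusive}.

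\emph{Case $S(p)\neq\emptyset$.} Let $s'$ be the maximum of $S(p)$, so $1\le s'\le s\le n-1$. Pick $\Lambda'\in\cM^{(s')}_K$ with $p\in Z^{(s')}_{\Lambda'}$. By definition $B^{(s')}_{\Lambda'}=Z^{(s')}_{\Lambda'}\setminus Z^{(s'+1)}$, since $s'\le n-1$ means the second line of the block definition does not apply. It therefore remains to check $p\notin Z^{(s'+1)}$.
\begin{itemize}
\item If $s'=s$, this is exactly the hypothesis.
\item If $s'<s$, then $s'+1\le s$, and $p\notin Z^{(s'+1)}$ follows from the maximality of $s'$ in $S(p)$.
\end{itemize}
Hence $p\in B^{(s')}_{\Lambda'}$ with $0\le s'\le s$, which proves \eqref{inclusive}.

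The argument is purely set-theoretic and has no real obstacle. The only points to watch are bookkeeping ones:
\begin{itemize}
\item $B^{(s')}$ is defined by removing $Z^{(s'+1)}$ only, not all higher zones. This is why one must take the \emph{maximal} $s'$ with $p\in Z^{(s')}$, and why $s'$ is capped at $s$ using the hypothesis.
\item The boundary case $s'=n$ never arises, because $s\le n-1$.
\end{itemize}
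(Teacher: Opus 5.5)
Your proof is correct and is essentially the paper's argument. The paper inducts on $s$, splitting $\cUr\setminus Z^{(s+1)}$ into $(\cUr\setminus Z^{(s)})\setminus Z^{(s+1)}$, handled by the inductive hypothesis, and $Z^{(s)}\setminus Z^{(s+1)}=\bigcup_{\Lambda\in\cM^{(s)}_K}B^{(s)}_\Lambda$; your choice of the maximal $s'\le s$ with $p\in Z^{(s')}$ is just that induction unrolled.
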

	
	\begin{proof}
		The proof goes by induction on the dimension $s$. If $s=0$,
		$$
		\cUr \setminus Z^{(1)} = Z^{(0)} = B^{(0)}\,,
		$$
		therefore \eqref{inclusive} holds.
		Suppose \eqref{inclusive} is proven for all $s' \leq s$. 
		We recall that, by the very definitions of
		$Z^{(s+1)}_\Lambda$ {and} $ B^{(s)}_\Lambda$, one has
		$$
		Z^{(s)}\setminus Z^{(s+1)}=	\Bigg(\bigcup_{\Lambda \in \MM_K^{(s)}} Z_\Lambda^{(s)}\Bigg) \setminus Z^{(s+1)} = \bigcup_{\Lambda \in \MM_K^{(s)}} B^{(s)}_\Lambda\,.
		$$
		Then we use the trivial decomposition
		$\cUr = (\cUr \setminus Z^{(s)}) \cup Z^{(s)}$
		to get { formula \eqref{inclusive}, namely}
		$$
		\begin{aligned}
			\cUr \setminus Z^{(s+1)}& =  \left(\left(\cUr \setminus
			Z^{(s)} \right) \setminus Z^{(s+1)} \right) \cup \left(Z^{(s)}
			\setminus Z^{(s+1)} \right)
			\\
			&\subseteq \Bigg( \Big( \bigcup_{\substack{\Lambda \in \MM_K^{(s')} \\ 0\leq s' \leq s-1}} B^{(s')}_\Lambda \Big) \setminus Z^{(s+1)} \Bigg) \cup \Big( \bigcup_{\Lambda \in \MM_K^{(s)}} B^{(s)}_\Lambda \Big) \subseteq \Big(\bigcup_{\substack{\Lambda \in \MM_K^{(s')} \\ 0\leq s' \leq s}} B^{(s')}_\Lambda\Big)\,.
		\end{aligned}
		$$
	
	\end{proof}
	
	\begin{lemma}[Diameters]\label{lemma:diameters}
		Let $K \geq 1$, $s \in \{1, \dots n\}$, $\Lambda \in \MM^{(s)}_K$  and suppose {that} the parameter $d_s$ appearing in \eqref{def:salsicciotti} satisfies
		\begin{equation}\label{small.diam}
			d_s \leq \frac{\beta^{(s)}_\Lambda}{2
			\cC_\omega}\ ,\quad \text{if}\quad s=1,...,n \,,
		\end{equation}
		and also
		\begin{equation}
			\label{small.diam.1}
			d_s\leq \frac{1}{2} \left(\frac{4\beta^{(s)}_\Lambda}{D_s}\right)^{\frac{1}{\alpha_s}}\ ,\quad \text{if}\quad s=1,...,n-1\,.
		\end{equation}
		Then,	for any $p \in B^{(s)}_{\Lambda}$, one has
		\begin{equation}\label{diam.cost}
			\diam{F_{\Lambda}^{(s)}(p)} < 2 \left(\frac{4
				\beta^{(s)}_\Lambda}{D_s} \right)^{\frac{1}{\alpha_s}} \, ,
		\end{equation}
		{where} $D_s$ and $\alpha_s$ {are} the constants in the definition \ref{def:p.steep} of
		P-Steepness. 
	\end{lemma}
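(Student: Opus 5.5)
The plan is the standard Guzzo--Chierchia--Benettin argument for steep functions, applied at the base point and with the P-steep projected increment. Fix $p\in B^{(s)}_\Lambda$ and a point $p'\in F^{(s)}_\Lambda(p)$; it suffices to show $\|p'-p\|<(4\beta^{(s)}_\Lambda/D_s)^{1/\alpha_s}=:\xi_0$, since then the diameter is at most $2\xi_0$ (strictly, after arguing the supremum is not attained, e.g. because the bound on $\|p'-p\|$ is strict and $F$ is contained in a compact set where the bound holds with room). Throughout, set $M:=\tilde\Lambda_\R$, which has dimension $s$.

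\emph{Case $s=n$.} Here $M=\R^n$, so $\Pi_{\tilde\Lambda}$ is the identity and $F^{(n)}_\Lambda(p)$ lies in $\{p+\R^n\}_{d_n}\cap Z^{(n)}_\Lambda$. By \eqref{beta.zeta}, every point of $Z^{(n)}_\Lambda$ satisfies $\|\omega-\ombar_\Lambda\|<\beta^{(n)}_\Lambda$. P-steepness at $p$ with $M=\R^n$ gives, for every $\xi<r$, some $\eta\le\xi$ such that $\|\omega(p+u\eta)-\omega(p)\|>D_n\xi^{\alpha_n}$ for all unit $u$. I would use this directly: connectedness of the component, together with the segment argument below, yields a point on the segment from $p$ towards $p'$ at distance $\eta$ from $p$. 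Both that point and $p$ lie in $Z_\Lambda$, so the triangle inequality forces $D_n\xi^{\alpha_n}<2\beta$, i.e.\ $\xi<\xi_0$. The hypothesis \eqref{small.diam} is only needed to control the thickening: points within complex distance $d_n$ of the real set move $\omega$ by at most $\cC_\omega d_n\le\beta/2$. Here I expect to use $\alpha_n\ge1$ and the smallness of $\beta$ relative to $r$, which is implicitly assumed.

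\emph{Case $s<n$, main step.} Write $p'=p+v+w$ with $v\in M$ and $\|w\|<d_s$ (plus possibly an imaginary part, which vanishes since we intersect with $\R^n$). The key is to reduce to motion inside $p+M$. Take the real point $p+v$. It lies within $d_s$ of $p'\in Z_\Lambda$, so by \eqref{small.diam} we get $\|\Pi_M(\omega(p+v)-\ombar)\|<\beta+\cC_\omega d_s\le\tfrac32\beta$. Suppose $\|v\|\ge\xi_0/2$. Apply P-steepness at $p$ in $M$ with $\xi=\|v\|$ (assuming this is less than $r$, using the smallness of $\beta$). This gives $\eta\le\xi$ with $\|\Pi_M(\omega(p+u\eta)-\omega(p))\|>D_s\xi^{\alpha_s}$ for all unit $u\in M$, in particular for $u=v/\|v\|$. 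Hence $\|\Pi_M(\omega(p+u\eta)-\ombar)\|>D_s\xi^{\alpha_s}-\beta$. To conclude we need $p+u\eta$ to satisfy $\|\Pi_M(\omega(p+u\eta)-\ombar)\|\le\tfrac32\beta$, which gives $D_s\xi^{\alpha_s}<\tfrac52\beta<4\beta$, i.e.\ $\xi<\xi_0$. This in turn gives $\|p'-p\|\le\|v\|+d_s<\xi_0$, using \eqref{small.diam.1} in the form $d_s\le\xi_0/2$ (so in fact one should aim for $\|v\|<\xi_0/2$, adjusting the constants via the margin $4$ versus $5/2$).

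\emph{Main obstacle.} The hard part is guaranteeing that the intermediate point $p+u\eta$ on the segment $[p,p+v]$ stays in the resonant zone. This is where the connected component is essential. I would handle it by a continuity/first-exit argument. Choose a path $\gamma$ in $F^{(s)}_\Lambda(p)$ from $p$ to $p'$, and project it onto $p+M$ by dropping the $w$-component; each projected point stays within $d_s$ of a point of $Z_\Lambda$, hence satisfies the $\tfrac32\beta$ bound. Rather than using the straight segment, I would run the steepness argument on the first point where the projected path reaches distance $\xi$ from $p$. Since P-steepness is quantified with $\min$ over all unit directions and $\max$ over radii $\eta\le\xi$, I would apply it with $\xi$ equal to the maximal distance reached by the projected path. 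Then every sphere of radius $\eta\le\xi$ in $p+M$ is crossed by the projected path, and at the crossing point direction $u$ we get the contradiction above. This crossing argument, which uses connectedness plus the min over directions, is the step I expect to require the most care.
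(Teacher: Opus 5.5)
\textbf{Verdict: there is a genuine gap in the case $s<n$.} Your $s=n$ case is fine once the ``segment'' is replaced by the path-crossing argument you describe later. (The remark about the thickening is moot there, since $F^{(n)}_\Lambda(p)=[Z^{(n)}_\Lambda]^p$.) It gives $\|p'-p\|<2^{-1/\alpha_n}\xi_0$, hence a diameter strictly below $2\xi_0$.

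For $s<n$ the gap is in the constant bookkeeping, and it comes from basing the steepness argument at the block's base point $p$. From $p$, the best your crossing argument yields is $D_s\xi^{\alpha_s}<\tfrac52\beta$. That is an in-plane radius $\|\Pi_{\tilde\Lambda}(p'-p)\|<\xi_1:=(5/8)^{1/\alpha_s}\xi_0$, while the orthogonal half-width is only controlled by $d_s\le\xi_0/2$. To get $\|p'-p\|\le\|v\|+d_s<\xi_0$ you need $\|v\|<\xi_0/2$, and no ``adjustment of the margin $4$ versus $5/2$'' gives this. It would require $D_s(\xi_0/2)^{\alpha_s}=4\cdot 2^{-\alpha_s}\beta\ge\tfrac52\beta$, which is false for every $\alpha_s\ge1$, and the constants $4$ and $\tfrac12$ are fixed by the statement. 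As written, your inequality $\xi<\xi_0$ only gives $\|p'-p\|<\tfrac32\xi_0$.

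Using orthogonality of $v$ and $w$ does not rescue it either. The cylinder $\{\|\Pi_{\tilde\Lambda}(x-p)\|<\xi_1,\ \|\Pi^\perp_{\tilde\Lambda}(x-p)\|<d_s\}$ has diameter $2\sqrt{\xi_1^2+d_s^2}$. With $d_s=\xi_0/2$ this exceeds $2\xi_0$ as soon as $(5/8)^{2/\alpha_s}>3/4$, i.e.\ roughly $\alpha_s>3.3$. So your route proves only a bound of order $\sqrt5\,\xi_0$, not the stated $2\xi_0$.

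The missing idea is to bound distances between pairs directly, rather than radii around $p$:
\begin{itemize}
\item Take arbitrary $p_1,p_2\in F^{(s)}_\Lambda(p)$ with $\|p_1-p_2\|\ge2\xi_0$, and a path $\gamma$ in $F^{(s)}_\Lambda(p)$ from $p_1$ to $p_2$.
\item Project $\gamma$ onto $p_1+\tilde\Lambda_\R$, not onto $p+\tilde\Lambda_\R$. Both endpoints lie within $d_s$ of $p+\tilde\Lambda_\R$, so $\|\Pi^\perp_{\tilde\Lambda}(\gamma(t)-p_1)\|<2d_s\le\xi_0$.
\item At the time where $\|\gamma(t)-p_1\|=2\xi_0$, the projected path is therefore at distance at least $\xi_0$ from $p_1$. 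Hence it crosses every sphere of radius $\eta\le\xi_0$ in $p_1+\tilde\Lambda_\R$.
\item Apply P-steepness at $p_1$ with $\xi=\xi_0$ at the crossing point. Using $\gamma(t),p_1\in Z^{(s)}_\Lambda$ and $2\cC_\omega d_s\le\beta$, you get $4\beta=D_s\xi_0^{\alpha_s}\le 2\beta+2\cC_\omega d_s\le3\beta$, a contradiction.
\end{itemize}
In this version the orthogonal budget $2d_s\le\xi_0$ and the in-plane budget $\xi_0$ add up exactly to $2\xi_0$, and only the margin $3\beta<4\beta$ is needed. This is the paper's argument.
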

	\begin{proof}
		{Suppose, for contradiction,} that there are two points $p_1, p_2 \in F_{\Lambda}^{(s)}(p)$ such that $\|p_1 - p_2\| \geq  2 \left(\frac{4 \beta^{(s)}_\Lambda }{D_s} \right)^{\frac{1}{\alpha_s}}$. Now, { as $ F_{\Lambda}^{(s)}(p)$ is open and connected, it is connected by arcs.} This implies that there exists a curve $\gamma:[0,1] \rightarrow F_{\Lambda}^{(s)}(p)$ such that
		$$
		\gamma(0) = p_1\,, \quad \gamma(1) = p_2\,, \quad \gamma(t) \subseteq F_{\Lambda}^{(s)}(p) \quad \forall t\,.
		$$
		Since $\|\gamma(0) - p_1\| = 0$ and $\|\gamma(0) - p_2 \| \geq
		2 \left(\frac{4 \beta^{(s)}_\Lambda }{D_s}
		\right)^{\frac{1}{\alpha_s}},$ there exists a value $t_\star
		\in [0, 1]$ such that
		\begin{equation}\label{t.star}
			\| \gamma(t_\star) - p_1\| =  2 \left(\frac{4 \beta^{(s)}_\Lambda }{D_s} \right)^{\frac{1}{\alpha_s}} = : 2 \xi\,.
		\end{equation}
		Let us denote 
		\begin{equation}\label{xi.def}
			u(t) := \Pi_{{\tilde\Lambda}} \left(\gamma(t) - p_1\right) \quad \forall t\in [0, t_\star]\,.
		\end{equation}
		{{If} $s=n$, then $u(t) \equiv \gamma(t) - p_1$ and \eqref{t.star} immediately {yields} $\|u(t_\star)\|= 2 \xi \geq \xi$. We now analyze the case $s< n$.} Since {for all $t$ one has} $\gamma(t ) \in F^{(s)}_{\Lambda} \subset (p + {\tilde\Lambda}_\reals )_{d_s}$ and $p_1 \in (p + {\tilde\Lambda}_\reals )_{d_s},$ {we can write}
		\begin{equation}\label{sbordamento}
			\begin{aligned}
				\Vert \Pi_{{\tilde\Lambda}}^\bot\left(\gamma(t) - p_1 \right)\Vert &\leq \Vert \Pi_{{\tilde\Lambda}}^\bot\left(\gamma(t) - p \right)\Vert + \Vert \Pi_{{\tilde\Lambda}}^\bot\left(p - p_1 \right)\Vert\\
				&= \operatorname{dist}\left(\gamma(t) - p, {\tilde\Lambda}_\reals \right) +\operatorname{dist}\left(p_1 - p, {\tilde\Lambda}_\reals\right)\\
				&= \operatorname{dist}\left(\gamma(t), p + {\tilde\Lambda}_\reals \right) + \operatorname{dist}\left(p_1, p + {\tilde\Lambda}_\reals \right) < 2 d_s\,,
			\end{aligned}
		\end{equation}
		{so that}
		\begin{equation}\label{magic.features}
			\begin{aligned}
				\Vert u(t_\star) \Vert &= \Vert \Pi_{{\tilde\Lambda}} (\gamma(t_\star) - p_1 ) \Vert\\
				&\geq \Vert \gamma(t_\star) - p_1 \Vert - \Vert \Pi_{{\tilde\Lambda} }^\bot (\gamma(t_\star) - p_1 ) \Vert \geq 2 \xi - {2}d_s \geq \xi\,,
			\end{aligned}    
		\end{equation}
		{where, {in the last inequality}, we have used \eqref{small.diam.1}.} {Thus, for all $s=1,\dots, n$, we have $\|u(t_\star)\| \geq \xi$.}
		Let $t_{**}$ be the smallest value in $[0, t_\star]$ such that $\|u(t_{**})\| = \xi.$\\
		{Then,} by the very definition of $\xi$ and $t_{**}$ and by continuity of the curve $u$, one has that
		\begin{equation}\label{intervals}
			\{ \| u(t)\|\ |\ t \in [0, t_{**}]\} = [0, \xi]\,.
		\end{equation}
		Therefore, since $h_0$ is P-steep, one has
                \begin{equation}\label{go.far}
			\begin{aligned}
				\max_{t \in [0, t_{**}]} \| \Pi_{{\tilde\Lambda}}\left(\omega(p_1 + u(t)) - \omega(p_1) \right)\| &\stackrel{\eqref{intervals}}{=} \max_{\substack{ \eta \in [0, \xi]\,, \\ \|u(t)\| = \eta}} \| \Pi_{{\tilde\Lambda}}\left(\omega(p_1 + u(t)) - \omega(p_1) \right)\|\\
				& \geq \max_{\eta \in [0, \xi]} \min_{\substack{u \in {\tilde\Lambda}_\reals\,,\\ \|u\| = \eta}}  \| \Pi_{{\tilde\Lambda}}\left(\omega(p_1 + u(t)) - \omega(p_1) \right)\|\\
				& \geq D_s \xi^{\alpha_s}\,.
			\end{aligned}
		\end{equation}
		Let then $\underline{t}$ be the point realizing the maximum in \eqref{go.far}; on the one hand, by \eqref{go.far}, one has
		$$
		\| \Pi_{{\tilde\Lambda}}\left(\omega(p_1 + u(\underline{t})) - \omega(p_1) \right)\|\geq D_s \xi^{\alpha_s}\,.
		$$
		But on the other hand, {by} using the fact that
		$$
		\| \gamma(\underline{t}) - (p_1 + u(\underline{t}))\| = \| \gamma(\underline{t}) - p_1 -\Pi_{\tilde\Lambda}( \gamma(\underline{t}) - p_1)\| = \|\Pi_{\tilde\Lambda}^\bot \left( \gamma(\underline{t}) - p_1 \right) \| \stackrel{\eqref{sbordamento}}{\leq} 2 d_s\,,
		$$
		and {by} recalling that $\gamma(\underline{t}) \in F^{(s)}_{\Lambda}(p)\subseteq Z^{(s)}_{\Lambda}\,,$ one also has
		$$
		\begin{aligned}
			\| \Pi_{\tilde\Lambda}\big(\omega(p_1 + u(\underline{t})) &- \omega(p_1)\big) \|  \leq	\| \Pi_{\tilde\Lambda}\left(\omega(\gamma(\underline{t})) - \omega(p_1)\right) \| + \|\omega(\gamma(\underline{t})) - \omega(p_1 + u(\underline{t})) \|\\ 
			& \leq 	\| \Pi_{\tilde\Lambda}\left(\omega(\gamma(\underline{t})) - \overline{\omega}\right) \| + 	\| \Pi_{\tilde\Lambda}\left( \omega(p_1) - \overline{\omega}\right) \| + \cC_\omega \|\gamma(\underline{t}) - (p_1 + u(\underline{t}))\|\\
			&\leq {2}
			\beta^{(s)}_\Lambda   + 2 \cC_\omega d_s \leq  3 \beta^{(s)}_\Lambda \,, 
		\end{aligned}
		$$
		{        where we used \eqref{small.diam}.}
		Recalling the definition of $\xi$ as in \eqref{t.star} one then gets {the contradiction}
		$$
		4 \beta^{(s)}_\Lambda 
		= D_s \xi^{\alpha_s} \leq \|\Pi_{\tilde\Lambda} \left(\omega(p_1 + u(\underline{t})) - \omega(p_1) \right) \| \leq  3 \beta^{(s)}_\Lambda \,.
		$$
		
	\end{proof}
	
	\begin{lemma}[{Small divisors estimate on $B^{(0)}$}]
		\label{small.div.0}
		Assume that the parameter $d_0$ defined at the
                beginning of Section \ref{rzone} fulfills
		\begin{equation}
			\label{imp.4}
			d_0\leq \frac{C_1}{2K^{A_1+1} \cC_\omega}\ ,
		\end{equation}
		{and take $p_0\in B^{(0)}$. Then, for any $k \in \mathbb{Z}^n \setminus
		\{0\}$ and $l \in \mathbb{Z}^m$} such that $\|l\| + \|k\|
		\leq K$, and for any $p \in F^{(0)}(p_0)$ (cf. \eqref{F.0}), one has
		{
			\begin{equation}\label{tutto.tranne.M.0}
				|\omega(p) \cdot k + \nu \cdot l | \geq \frac{C_1}{2K^{A_1}}\,.
			\end{equation}
		}
	\end{lemma}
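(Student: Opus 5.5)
The plan is to reduce the estimate at an arbitrary point $p\in F^{(0)}(p_0)$ to the point $p_0$ itself, and there to use that $p_0\in B^{(0)}=Z^{(0)}$ lies outside every one-dimensional resonant zone.

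\emph{Step 1: the bound at $p_0$.} Fix $(k,l)$ with $k\neq 0$ and $\|k\|+\|l\|\leq K$. Let $\Lambda$ be the (unique) $K$-maximal rank-one submodule containing $(k,l)$; concretely, take the saturation of $\mathrm{span}_\Z\{(k,l)\}$ inside $\Z^{n+m}$, generated by $v=(k,l)/g$ with $g=\gcd$ of the entries.
\begin{itemize}
\item Since $v$ has norm at most $K$, this module is a $K$-submodule, and it is maximal among rank-one $K$-submodules containing $(k,l)$.
\item It is admissible because $k\neq0$ forces its intersection with $\{0\}\times\Z^m$ to be trivial. Hence $\Lambda\in\cM^{(1)}_K$.
\item Write $v=(k_1,l_1)$. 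Then $\tilde\Lambda_\R=\R k_1$ and $|\tilde\Lambda|=\|k_1\|\leq\|k\|\leq K$.
\item By the definition \eqref{defom.bar} of the center (Lemma \ref{basis} makes the choice of basis irrelevant), $\ombar_\Lambda\cdot k_1=-\nu\cdot l_1$. Therefore
\[
\|\Pi_{\tilde\Lambda}(\omega(p_0)-\ombar_\Lambda)\|=\frac{|\omega(p_0)\cdot k_1+\nu\cdot l_1|}{\|k_1\|}.
\]
\item Since $p_0\notin Z^{(1)}_\Lambda$, this quantity is at least $\beta^{(1)}_\Lambda=C_1/(K^{A_1}\|k_1\|)$.
\end{itemize}
This gives $|\omega(p_0)\cdot k_1+\nu\cdot l_1|\geq C_1K^{-A_1}$. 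Multiplying by $g\geq1$ yields $|\omega(p_0)\cdot k+\nu\cdot l|\geq C_1K^{-A_1}$.

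\emph{Step 2: passing to $p\in F^{(0)}(p_0)$.} We have $\|p-p_0\|<d_0$, and the segment between them stays in a region where the bound \eqref{sup.omega} on $\partial_p^2h_0$ applies. Then
\[
|(\omega(p)-\omega(p_0))\cdot k|\leq \cC_\omega d_0\|k\|\leq \cC_\omega d_0K\leq \frac{C_1}{2K^{A_1}}
\]
by \eqref{imp.4}. The triangle inequality then gives \eqref{tutto.tranne.M.0}.

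\emph{Main obstacle.} The only delicate point is the bookkeeping in Step 1: one must check that the rank-one module through $(k,l)$ really belongs to $\cM^{(1)}_K$, namely that it is maximal and admissible. Taking the primitive vector handles both properties, since $\|v\|\leq\|(k,l)\|\leq K$. A second point to verify is that the segment from $p_0$ to $p$ lies in the domain where \eqref{sup.omega} holds. Because $\cC_\omega$ is defined as a supremum over $\cUr$, I would use convexity of the ball $\mathcal{B}_{d_0}(p_0)$ and, if needed, the analytic extension of $h_0$ to $\cU_\sigma$ (via Cauchy estimates) to justify the mean value bound.
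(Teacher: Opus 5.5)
Your proof is correct and follows the paper's argument essentially step by step. The paper also takes the $K$-maximal rank-one module through $(k,l)$ and uses $p_0\notin Z^{(1)}_\Lambda$ together with $\|k\|\geq|\tilde\Lambda|$ (your factor $g\geq 1$), then passes to $p\in F^{(0)}(p_0)$ via the Lipschitz bound on $\omega$. Your Step 2 keeps the factor $\|k\|\leq K$ that the paper's displayed estimate drops; that factor is the reason for the exponent $A_1+1$ in \eqref{imp.4}. Your remark that the segment may leave $\cUr$ is a genuine technicality the paper passes over, and it can be absorbed into the constant $\cC_\omega$.
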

	{
		\begin{proof} 
			Let $\Lambda$ be the maximal modulus  {of dimension $1$} containing $(k,l)$ and let
			$\overline\omega_\Lambda$ be its center.  {Note that, since $k \neq 0$ and $\|k\| + \|l\| \leq K$, $\Lambda$ is admissible.}
			By the very definition of $B^{(0)}$, for any $p_0 \in B^{(0)}$ one has that
			$$
			|\omega(p_0) \cdot k + \nu \cdot l| = |(\omega(p) - \ombar_\Lambda)\cdot k| = \|k\|\|\Pi_{\tilde\Lambda}(\omega(p_0) - \ombar_\Lambda)\| \geq \frac{C_1\|k\|}{K^{A_1}|\tilde\Lambda|} \geq \frac{C_1}{K^{A_1}}\,,
			$$
			where in the last passage we have used that $\|k\| \geq |\tilde\Lambda|$. Therefore for any $p \in F^{(0)}(p_0)$ one has
			\begin{align*}
				|\omega(p) \cdot k + \nu
				\cdot l |\geq |\omega(p_0) \cdot k + \nu \cdot l |-\left\|\omega(p)-\omega(p_0)\right\| \|k\| \geq
				\frac{C_1}{K^{A_1}}- \cC_\omega d_0 \geq \frac{C_1}{2 K^{A_1}}\,,
			\end{align*}
			which proves \eqref{tutto.tranne.M.0}.
	\end{proof}}
        {In the proof of the next lemma we will make use of the following {observation}.}

        {
        \begin{remark}
          \label{altra.base}
Let $\mathring{\Lambda}$ be
any $K$-submodule contained in $\Lambda$ with
same rank $s$ of $\Lambda$. We consider also the case in which $\mathring{\Lambda}$ 
is non maximal.
Then $\mathring{\Lambda} = \operatorname{span}_\Z\{
\mathring{v}_1, \dots, \mathring{v}_s\}$ with $\mathring{v}_j =
(\mathring{k}_j, \mathring{l}_j)$ and { $\|\mathring{k}_j\|+\|\mathring{l}_j\|\leq
K$.} 
Defining $\mathring{\cK} := [\mathring{k}_1 \dots \mathring{k}_s]$
the matrix with the components of the vectors $\mathring{k}_j$ as
columns, we have that $\mathring{\cK} = \cK A$ where $A$ is an
$s\times s$ matrix with integer coefficients. Then
	\begin{equation}
		\label{volume.2}
		\left|\tilde\Lambda\right| \leq \sqrt{\det(\mathring{\cK}^t \mathring{\cK})} \leq K^s\,,
	\end{equation}
	where the last passage follows from Hadamard's inequality.
        \end{remark}
}	
	
	\begin{lemma}[Small divisors estimate,  {$k \notin \tilde \Lambda_\R$}]\label{cor:small.divisors}
		Let $s = \{1, \dots, n-1\}$, let $\Lambda\in\cM^{(s)}_K$ and $p_0 \in B^{(s)}_{\Lambda}$. Define
		\begin{equation}
			\label{gamma.m}
			\gamma_{\Lambda}^{(s)} := \frac{C_{s+1}}{3 K^{A_{s+1}}|\tilde\Lambda|}
		\end{equation}
		and assume
		\begin{align}
			\label{le.puntine}
			\left(\frac{4\beta_{\Lambda}^{(s)}}{D_s}
			\right)^{\frac{1}{\alpha_s}} \leq \frac{\gamma_{\Lambda}^{(s)}}{4
				K \cC_\omega}
			\\
			\label{imp.2}
			A_{s+1} \leq A_s - 1\,,\quad
			{C_{s+1} \geq 6 C_s}\, .
		\end{align}
 For any
		$(k,l)\in\Z^n\times\Z^m$ s.t. $\|l\| + \|k\| \leq K$  {and $k \notin \tilde \Lambda_\R$}, and for any $p \in F^{(s)}_{\Lambda}(p_0)$, one has
		\begin{equation}\label{tutto.tranne.M}
			|\omega(p) \cdot k + \nu \cdot l | \geq \frac{\gamma^{(s)}_\Lambda}{2}\,.
		\end{equation}
	\end{lemma}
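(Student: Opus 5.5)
The plan is to first prove the lower bound at the base point $p_0$ by exploiting $p_0\notin Z^{(s+1)}$, and then propagate it to the whole fast drift block using the diameter estimate of Lemma \ref{lemma:diameters}.

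\textbf{Step 1 (the enlarged module).} Let $v_j=(k_j,l_j)$, $j=1,\dots,s$, be a basis of $\Lambda$ with $\|k_j\|+\|l_j\|\le K$. Let $\Lambda'$ be the module generated by $v_1,\dots,v_s$ and $(k,l)$. Since $k\notin\tilde\Lambda_\R$, the vectors $k_1,\dots,k_s,k$ are linearly independent, so $\Lambda'$ is a $K$-submodule of rank $s+1$. Let $\Lambda''\supseteq\Lambda'$ be a $K$-maximal submodule of rank $s+1$ containing it. Then $\Lambda''$ lies in the rational span of $\Lambda'$. Since the projection onto the $k$-components is injective on that span (its image has rank $s+1$), $\Lambda''\cap(\{0\}\times\Z^m)=\{0\}$, so $\Lambda''\in\cM^{(s+1)}_K$. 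Because $p_0\in B^{(s)}_\Lambda$, we have $p_0\notin Z^{(s+1)}_{\Lambda''}$, hence
\[
\|w\|\ge\beta^{(s+1)}_{\Lambda''}=\frac{C_{s+1}}{K^{A_{s+1}}|\tilde\Lambda''|},\qquad w:=\Pi_{\tilde\Lambda''}(\omega(p_0)-\ombar_{\Lambda''}).
\]

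\textbf{Step 2 (geometric decomposition at $p_0$).} Since $\Lambda\subset\Lambda''$, the center satisfies $\ombar_{\Lambda''}\cdot k_j=-\nu\cdot l_j$ for all $j$, so $\Pi_{\tilde\Lambda}\ombar_{\Lambda''}=\ombar_\Lambda$. Thus $\Pi_{\tilde\Lambda}w=\Pi_{\tilde\Lambda}(\omega(p_0)-\ombar_\Lambda)$, whose norm is $<\beta^{(s)}_\Lambda$ because $p_0\in Z^{(s)}_\Lambda$.

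Split $w=\Pi_{\tilde\Lambda}w+w^\perp$, where $w^\perp\in\tilde\Lambda''_\R\cap\tilde\Lambda_\R^\perp$ is one-dimensional and $\|w^\perp\|\ge\|w\|-\beta^{(s)}_\Lambda$. Since $(k,l)\in\Lambda''$, we have $\omega(p_0)\cdot k+\nu\cdot l=w\cdot k$. Moreover $|w^\perp\cdot k|=\|w^\perp\|\,\dist{k}{\tilde\Lambda_\R}$, and
\[
\dist{k}{\tilde\Lambda_\R}=\frac{\mathrm{vol}(k_1,\dots,k_s,k)}{|\tilde\Lambda|}\ge\frac{|\tilde\Lambda''|}{|\tilde\Lambda|}.
\]
The inequality holds because $k_1,\dots,k_s,k$ generate a sublattice of $\tilde\Lambda''$. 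Also $\dist{k}{\tilde\Lambda_\R}\le\|k\|\le K$. Combining these,
\[
|\omega(p_0)\cdot k+\nu\cdot l|\ge \frac{C_{s+1}}{K^{A_{s+1}}|\tilde\Lambda|}-2K\beta^{(s)}_\Lambda = 3\gamma^{(s)}_\Lambda-\frac{2C_sK}{K^{A_s}|\tilde\Lambda|}.
\]
By \eqref{imp.2}, $K^{1-A_s}\le K^{-A_{s+1}}$ and $2C_s\le C_{s+1}/3$, so the last term is at most $\gamma^{(s)}_\Lambda$. This gives $|\omega(p_0)\cdot k+\nu\cdot l|\ge2\gamma^{(s)}_\Lambda$.

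The step I expect to be the main obstacle is this one. A naive estimate of $w\cdot k$ would produce the factor $\beta^{(s)}_\Lambda|\tilde\Lambda''|/|\tilde\Lambda|$, which cannot be controlled by the hypotheses. The key is to subtract the error $\beta^{(s)}_\Lambda\,\dist{k}{\tilde\Lambda_\R}\le K\beta^{(s)}_\Lambda$ at the level of $w^\perp\cdot k$, and only then use the volume inequality on the main term.

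\textbf{Step 3 (propagation to $F^{(s)}_\Lambda(p_0)$).} Assuming the hypotheses on $d_s$ of Lemma \ref{lemma:diameters}, for $p\in F^{(s)}_\Lambda(p_0)$ we have $\|p-p_0\|<2(4\beta^{(s)}_\Lambda/D_s)^{1/\alpha_s}\le\gamma^{(s)}_\Lambda/(2K\cC_\omega)$ by \eqref{le.puntine}. Hence $|(\omega(p)-\omega(p_0))\cdot k|\le\cC_\omega K\|p-p_0\|\le\gamma^{(s)}_\Lambda/2$. This yields $|\omega(p)\cdot k+\nu\cdot l|\ge\frac32\gamma^{(s)}_\Lambda\ge\gamma^{(s)}_\Lambda/2$, which is \eqref{tutto.tranne.M}.
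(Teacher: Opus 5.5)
Your proposal is correct and follows essentially the same route as the paper. You enlarge $\Lambda$ to the $K$-maximal rank-$(s+1)$ module containing $(k,l)$, then use $p_0\notin Z^{(s+1)}$ together with $\Pi_{\tilde\Lambda}\ombar_{\Lambda''}=\ombar_\Lambda$ and the volume bound $|\tilde\Lambda''|\le|\tilde\Lambda|\,\|\Pi^\perp_{\tilde\Lambda}k\|$, exactly as in the paper's Steps 1--2 (your $w^\perp$ is the paper's $(\Pi_{\tilde\Lambda^+}-\Pi_{\tilde\Lambda})(\omega(p_0)-\ombar_{\Lambda^+})$), and you propagate via Lemma \ref{lemma:diameters} and \eqref{le.puntine}, with a slightly more careful admissibility argument and constant bookkeeping than the paper gives.
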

	\begin{remark}
		\label{cond.as}
		Equation \eqref{le.puntine} is equivalent to
		\begin{equation}
			\label{cond.as.2}
			\frac{C_{s+1}{D_s^{1/\alpha_s}}}{12 \cC_\omega(4C_s)^{1/\alpha_s}}\geq
			\frac{K^{A_{s+1}}K}{K^{A_s/\alpha_s}}\left|\tilde\Lambda\right|^{1-\frac{1}{\alpha_s}}\ ,
		\end{equation}
		which {is} a condition on the constants $C_s$ {if} the
		r.h.s. is bounded with respect to $K$ and $\Lambda$. Now, since $|\tilde\Lambda|\leq
		K^{s}$, the r.h.s. is
		bounded if
		\begin{equation}
			\label{cond.as.1}
			A_{s+1}+1-\frac{A_s}{\alpha_s}+s\left(1-\frac{1}{\alpha_s}\right)\leq
			0\ \qquad \iff\qquad A_{s+1}+(s+1)\leq \frac{A_s+s}{\alpha_s}\,.
		\end{equation}
		{We are going to impose {this} condition on the parameters $A_s$.}
		{Notice that \eqref{cond.as.1} implies the first inequality in \eqref{imp.2}.}
	\end{remark}

	\noindent
	{\it Proof of Lemma \ref{cor:small.divisors}.} The idea
	is to show that if \eqref{tutto.tranne.M} is violated, then $p$ is
	$s+1$ resonant, which contradicts the fact that it is in a block.
	
	We are going to proceed in three steps.
	\begin{itemize}
		\item {\sc Step 1:} First we identify a suitable lattice
		$\Lambda^+\in\cM^{(s+1)}_K$. 
		\item {\sc Step 2:}  Then we exploit the fact that, being $p_0 \in B^{(s)}_{\Lambda}$, in particular $p_0 \notin Z^{(s+1)}_{\Lambda^+}$, to  prove that
		$$
		|\omega(p_0)\cdot k + \nu \cdot l| \geq \gamma_M^{(s)}\,.
		$$
		\item {\sc Step 3:} Finally, we use Lemma
		\ref{lemma:diameters} to ensure that the lower bound for
		$|\omega(p_0)\cdot k + \nu \cdot l|$ obtained in the
		previous step implies the lower bound \eqref{tutto.tranne.M}
		for $|\omega(p) + \nu \cdot l|$.
	\end{itemize}
	{\sc Step 1}  {
		Let
		$v_j=(k_j,l_j)$, $j=1,...,s$ be a  {basis (over the integers) of} $\Lambda$ with
		$\left\|k_j\right\|+\left\|l_j\right\|\leq K$. We define
		$\Lambda^+$ as the $K$-maximal submodule of $\Z^{n}\times \Z^m$ containing $\operatorname{span}_\Z\left\{v_1,...,v_s,(k,l)\right\}$, and we {observe} that, since $k \notin \tilde{\Lambda}$, the modulus $\Lambda^+$ is admissible. Furthermore, {as} $\{v_1, \dots, v_s, (k,l)\}$ are linearly independent vectors in $\Lambda^+$,}
	by the very definitions of $\ombar_\Lambda$ and
	$\ombar_{\Lambda^+}$  {(see \eqref{defom.bar})}, one has
	$$
	\ombar_{\Lambda^+} \cdot k_j = - \nu \cdot
	l_j=\ombar_{\Lambda} \cdot k_j \,, \quad j = 1, \dots, s\,,
	$$
	so that
	\begin{equation}
		\label{pi.emme.zero}
		\Pi_{\tilde\Lambda} (\ombar_\Lambda -\ombar_{\Lambda^+}) = 0\,.
	\end{equation}
	{\sc Step 2}
	Let now $k_\parallel := \Pi_{\tilde\Lambda} k$ and $k_\bot := \Pi_{\tilde\Lambda}^{\bot} k$; one has
	\begin{align}\label{domande}
		|\omega(p_0) \cdot k + \nu \cdot l | & = |(\omega(p_0)
		- \ombar_{\Lambda^+}) \cdot k| \geq |(\omega(p_0) - \ombar_{\Lambda^+}) \cdot k_\bot| - |(\omega(p_0) - \ombar_{\Lambda^+}) \cdot k_\parallel|\,,
	\end{align}
	with
	\begin{equation}\label{concorsi}
		\begin{aligned}
			\vabs{(\omega(p_0) - \ombar_{\Lambda^+}) \cdot k_\parallel} &\leq \norm{\Pi_{\tilde\Lambda}(\omega(p) - \ombar_{\Lambda^+})} \norm{k_\parallel} = \norm{\Pi_{\tilde\Lambda}(\omega(p_0) - \ombar_{\Lambda})} \norm{k_\parallel}\\
			& \leq \frac{C_s}{K^{A_s} |\tilde\Lambda|} \norm{k_\parallel}\,,
		\end{aligned}
	\end{equation}
	and
	\begin{align*}
		\frac{|(\omega(p_0) - \ombar_{\Lambda^+}) \cdot
			k_\bot|}{\|k_\bot\|}&=\left\|(\Pi_{\tilde\Lambda^+}-
		\Pi_{\tilde\Lambda}) 
		(\omega(p_0)-\bar\omega_{\Lambda^+})\right\|  
		\\
		&\geq \| \Pi_{\tilde\Lambda^+} (\omega(p_0) - \ombar_{\Lambda^+})\| - \|\Pi_{\tilde\Lambda} (\omega(p_0) - \ombar_{\Lambda^+})\|\\
		&=\| \Pi_{\tilde\Lambda^+} (\omega(p_0) - \ombar_{\Lambda^+})\| - \|\Pi_{\tilde\Lambda} (\omega(p_0) - \ombar_{\Lambda})\|\\
		&\geq \frac{C_{s+1}}{K^{A_{s+1}}|\tilde\Lambda^+|} - \frac{C_s}{K^{A_s} |\tilde\Lambda|}\,,
	\end{align*}
	where in the last passage we have used the fact that $p_0 \in
	B^{(s)}_{\Lambda}$ and thus $p_0\not\in
	Z^{(s+1)}_{\Lambda^+}$.
        {Define now $\mathring{\cK}^+$ as the $s \times n$ matrix with columns $[k_1, \dots, k_s, k]$, where the vectors $k_1, \dots, k_s$ are defined as in the beginning of Step 1. By the maximality of $\Lambda^+$ and using \eqref{volume.2} with $\Lambda$ and $\mathring{\cK}$ {replaced, respectively,} by $\Lambda^+$ and $\mathring{\cK}^+$, one has
		\begin{equation}\label{volumi}
			|\tilde\Lambda^+| \leq \sqrt{\det ((\mathring{\cK}^+)^t \mathring{\cK}^+)}\leq |\tilde\Lambda|
			\|k_\bot\|\,.
		\end{equation} Therefore} we get
	\begin{equation}
		\label{ricorsi}
		\begin{aligned}
			|(\omega(p_0) - \ombar_{\Lambda^+}) \cdot
			k_\bot| \geq  \frac{C_{s+1}
				\|k_\bot\|}{K^{A_{s+1}}|\tilde\Lambda^+|} -
			\frac{C_s  {\|k_\bot\|}}{K^{A_s} |{\tilde \Lambda}|} \geq \frac{C_{s+1}}{K^{A_{s+1}}|\tilde\Lambda|} - \frac{C_s \|k_\bot\|}{K^{A_s} |\tilde\Lambda|}\,.
		\end{aligned}
	\end{equation}
{Then, by combining together formulas \eqref{domande}, \eqref{concorsi}, \eqref{ricorsi}, we obtain}
	\begin{align*}
		|(\omega(p_0) - \ombar_{\Lambda^+}) \cdot k| &\geq \frac{C_{s+1}}{K^{A_{s+1}}|\tilde\Lambda|} - \frac{C_s (\|k_\bot\| + \|k_\parallel\|)}{K^{A_s} |\tilde\Lambda|} \geq \frac{C_{s+1}}{K^{A_{s+1}}|\tilde\Lambda|} - \frac{2 C_s }{K^{A_s - 1} |\tilde\Lambda|}\,,
	\end{align*}
{so that, by taking hypothesis \eqref{imp.2} into account, we finally obtain}
	$$
	|(\omega(p_0) - \ombar_{\Lambda^+}) \cdot k| \geq \frac{C_{s+1}}{3 K^{A_{s+1}}|\tilde\Lambda|}\,.
	$$
	{\sc Step 3} Recall that, by Lemma \ref{lemma:diameters}, $\| p - p_0\| \leq 2 \left(\frac{4 \beta^{(s)}_\Lambda}{D_s}\right)^{\frac{1}{\alpha_s}}$; then  by Step 2 one has
	\begin{align*}
		|\omega(p) \cdot k + \nu \cdot l | & \geq |\omega (p_0) \cdot
		k + \nu \cdot l| - \|k\| \| p- p_0\|\cC_\omega
		\\
		&\geq \gamma_\Lambda^{(s)} - 2 K  \left(\frac{4 \beta^{(s)}_\Lambda}{D_s}\right)^{\frac{1}{\alpha_s}}\cC_\omega  \geq \frac{\gamma_\Lambda^{(s)}}{2}\,,
	\end{align*}
	where in the last passage we have used assumption \eqref{le.puntine}.
	\qed
	
	\begin{lemma}[Small divisors estimate,  {$k \in \tilde \Lambda_\R$}]\label{divisors.1}
		Let $s = \{1, \dots, n-1\}$, let $\Lambda\in\cM^{(s)}_K$. Define
		\begin{equation}
			\label{gamma.m.1}
			\tilde\gamma_{\Lambda}^{(s)} :=
			\frac{\Gamma}{|\tilde\Lambda|(s+1)^\tau K^{(s+1)\tau}}
		\end{equation}
		and assume
		\begin{align}
			\label{imp.2.1}
			A_{s} \geq (s+1)\tau+1\ ,\quad
			C_s\leq\frac{\Gamma}{2(s+1)^\tau}\ .
		\end{align}
For any
		$(k,l)\in\Z^n\times\Z^m\setminus\Lambda$ s.t. $\|l\| + \|k\| \leq K$  {such that $k \in \tilde \Lambda_\R \cap \Z^n$}, and for any $p \in Z^{(s)}_\Lambda$, one has
		\begin{equation}\label{tutto.tranne.M.1}
			|\omega(p) \cdot k + \nu \cdot l | \geq \frac{\tilde\gamma^{(s)}_\Lambda}{2}\,.
		\end{equation}
	\end{lemma}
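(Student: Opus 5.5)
The plan is to convert the small divisor $\omega(p)\cdot k+\nu\cdot l$ into a pure time-frequency divisor $\nu\cdot L$, with $L\in\Z^m\setminus\{0\}$ of controlled size, by subtracting a suitable integer combination of the generators of $\Lambda$. The Diophantine estimate \eqref{eq:nuDiophantine} then gives a lower bound, and the resonant-zone condition $p\in Z^{(s)}_\Lambda$ controls the error.

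\emph{Step 1 (independence).} Let $v_j=(k_j,l_j)$, $j=1,\dots,s$, be a basis of $\Lambda$ with $\|k_j\|+\|l_j\|\le K$. I first show that $(k,l)\notin \operatorname{span}_\R\Lambda$. Suppose instead that $(k,l)$ lies in this real span. Then $\operatorname{span}_\Z\{v_1,\dots,v_s,(k,l)\}$ is a $K$-submodule of rank $s$ that properly contains $\Lambda$, since $(k,l)\notin\Lambda$. This contradicts the $K$-maximality of $\Lambda$.

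\emph{Step 2 (integer relation via Cramer).} Since $k\in\tilde\Lambda_\R$ and $k_1,\dots,k_s$ are linearly independent, I select $s$ coordinates on which the $s\times s$ minor $a_0$ of $\cK$ is nonzero. Cramer's rule then gives integers $a_1,\dots,a_s$ with
\[
a_0k=\sum_{j=1}^s a_jk_j .
\]
Here each $a_j$ is, up to sign, the minor obtained by replacing the column $k_j$ by $k$.
\begin{itemize}
\item By Hadamard's inequality, $|a_j|\le K^s$ for all $j$, since every column involved has norm at most $K$.
\item By Cauchy--Binet, $\sum(\text{minors})^2=\det(\cK^t\cK)$, so $1\le|a_0|\le|\tilde\Lambda|$.
\end{itemize}
Set $L:=a_0l-\sum_j a_jl_j\in\Z^m$, so that $a_0(k,l)-\sum_ja_jv_j=(0,L)$. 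By Step 1 and $a_0\neq0$ we get $L\neq0$. Moreover
\[
\|L\|\le |a_0|K+\sum_{j=1}^s|a_j|K\le (s+1)K^{s+1}.
\]
Hence \eqref{eq:nuDiophantine} gives $|\nu\cdot L|\ge \Gamma/\big((s+1)^\tau K^{(s+1)\tau}\big)$.

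\emph{Step 3 (comparison with the center).} By the definition of $\ombar_\Lambda$ (cf. Lemma \ref{basis}) we have $\ombar_\Lambda\cdot k_j=-\nu\cdot l_j$. Therefore
\[
a_0(\omega(p)\cdot k+\nu\cdot l)=a_0(\omega(p)-\ombar_\Lambda)\cdot k+\nu\cdot L .
\]
Because $k\in\tilde\Lambda_\R$ and $p\in Z^{(s)}_\Lambda$, the first term is bounded by $|a_0|\,\|k\|\,\|\Pi_{\tilde\Lambda}(\omega(p)-\ombar_\Lambda)\|\le |\tilde\Lambda|\,K\,\beta^{(s)}_\Lambda=C_sK^{1-A_s}$. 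Using \eqref{imp.2.1}, namely $A_s\ge(s+1)\tau+1$ and $C_s\le\Gamma/(2(s+1)^\tau)$, this error is at most half of the Diophantine lower bound. We thus obtain $|a_0|\,|\omega(p)\cdot k+\nu\cdot l|\ge \Gamma/\big(2(s+1)^\tau K^{(s+1)\tau}\big)$. Dividing by $|a_0|\le|\tilde\Lambda|$ yields \eqref{tutto.tranne.M.1}.

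The main obstacle is Step 2: the integer relation must be chosen so that two bounds hold at once. The multiplier $a_0$ must be bounded by $|\tilde\Lambda|$, so that the $\beta^{(s)}_\Lambda$-error cancels the factor $|\tilde\Lambda|$ in $\beta^{(s)}_\Lambda$. At the same time, the resulting $L$ must have size $O(K^{s+1})$. The Cramer/Cauchy--Binet choice provides both bounds simultaneously.
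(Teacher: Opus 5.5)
Your proof is correct and follows essentially the same route as the paper. Both use Cramer's rule on a nonvanishing $s\times s$ minor, Hadamard for the numerators, Cauchy--Binet for $|\Delta|\le|\tilde\Lambda|$, and the Diophantine bound applied to $\Delta l-\sum_j\Delta_j l_j$, followed by a perturbation off the exact resonance. The paper perturbs from a generic $\omega_0\in P_\Lambda$, while you take $\ombar_\Lambda$ directly. Your Step 1, where $K$-maximality forces $L\neq 0$, makes explicit a point the paper uses only tacitly when it invokes \eqref{eq:nuDiophantine}.
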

	
	\proof
	We proceed in 2 steps: first we prove that {if $\omega_0$ belongs to the exact resonance $
	P_\Lambda$ of definition \ref{def:risonanza_esatta},} then
	\begin{equation}
		\label{omega0.1}
		\left|\omega_0\cdot k+\nu\cdot l\right|\geq
		\tilde\gamma^{(s)}_\Lambda\ ,
	\end{equation}
	then we prove that this implies an analogous estimate for $p\in
	Z^{(s)}_\Lambda$.

	 Let
	$v_j=(k_j,l_j)$, $j=1,...,s$ be a  {basis (over the integers) of} $\Lambda$ with
	{$\left\|k_j\right\|+\left\|l_j\right\|\leq K$.} We remark
	that there exist $c_1,...,c_s$ s.t.
	\begin{equation}
		\label{linear.eq}
		k=\sum_{j=1}^sc_jk_j\ ,
	\end{equation}
	so that one has
	\begin{equation}
		\label{lstar}
		(k,l)-\sum_{j=1}^sc_j(k_j,l_j)=(0,l^*)\ .
	\end{equation}
	We need to obtain some information of $l^*$. To this end we compute
	the numbers $c_j$. We first consider the matrix
	$\cK=[k_1...k_s]$ and remark that it has a minor of order $s$ with
	non-vanishing determinant. Without loss of generality we assume that it
	is $\check\cK:=[\check k_1...\check k_s] $ where $\check k_j$ is the
	truncation of $k_j$ to the first $s$ components. So, we {observe that
	solving \eqref{linear.eq}-\eqref{lstar}} is equivalent to solve
	\begin{equation}
		\label{linear.eq.1}
		\check k=\sum_{j=1}^sc_j\check k_j\ .
	\end{equation}
	By
	Cramer's theorem one has
	\begin{equation}
		\label{determinanti}
		c_1=\frac{\Delta[\check k\, \check k_2... \check
			k_s]}{\Delta[\check k_1... \check k_s] }\, ,..., \, c_s=\frac{\Delta[\check k_1... \check
			k_{s-1}\, \check k]}{\Delta[\check k_1... \check k_s] }
	\end{equation}
	where $\Delta:=\Delta[\check k_1... \check k_s]$ is the determinant of the
	matrix with column vectors $\check k_1... \check k_s $ and {an analogous notation holds} for
	the numerators in Eq. \eqref{determinanti}. By Hadamard's inequality,
	each one of the determinants has modulus bounded by
	$K^s$. Furthermore, {the Cauchy Binet formula ensures that} $|\Delta|\leq \left|\tilde\Lambda\right|$. We also know that
	$\Delta\in\Z$. From Eq. \eqref{determinanti}, one gets
	\begin{equation}
		\label{lstar.2}
		l^* =l-\sum_{j=1}^s\frac{\Delta_j}{\Delta}l_j\ ,
	\end{equation}
	where we denoted by $\Delta_j$ the $j-th$ determinant in the numerators
	of \eqref{determinanti}. {Now, take $\omega_0\in P_{\Lambda}$, so that $(\omega_0,\nu)\cdot (k_j,l_j)=0$ for all $j\in\{1,\dots,s\}$; then we have}
	\begin{equation}
		\label{omega.0}
		\left|\omega_0\cdot k+\nu\cdot
		l\right|=\left|(\omega_0,\nu)\cdot[(k,l)-\sum_{j=1}^sc_j
		(k_j.l_j)] \right|=|\nu\cdot l^*|\ , 
	\end{equation}
	but, {as $\nu$ is Diophantine, we can write}
	\begin{align}
		\label{diof.2}
		|\nu\cdot
		l^*|=\left|\nu\cdot\left(l-\sum_{j=1}^s\frac{\Delta_j}{\Delta}l_j
		\right) \right|
		=\frac{1}{\left|\Delta\right|}\left|\nu\cdot\left(l\Delta-\sum_{j=1}^s
		l_j\Delta_j\right) \right|\geq \frac{1}{\left|\Delta\right|}
		\frac{\Gamma}{[(s+1)K^{s+1}]^\tau}
		\\
		\nonumber
		\geq
		\frac{1}{\left|\tilde\Lambda\right|}
		\frac{\Gamma}{[(s+1)K^{s+1}]^\tau}
		\ ,
	\end{align}
	{where we have used the inequality $\left\|l\Delta-\sum_{j=1}^s
	l_j\Delta_j\right\|\leq (s+1)K^{s+1}$. This implies claim \eqref{omega0.1}}
	and concludes Step 1.
	
	Step 2. Let  $p\in Z^{(s)}_ \Lambda$ and let
	$\omega:=\omega(p)$, so that
	$$
	\left\|\Pi_{\tilde\Lambda}(\omega-\ombar_\Lambda)\right\|\leq
	\beta_{\Lambda}^{(s)}\ .
	$$
	Thus, if $\omega_0\in P_\Lambda$,  {recalling \eqref{def.om.1}} we have
	$$
	\left\|\Pi_{\tilde\Lambda}(\omega-\omega_0)\right\|\leq
	\left\|\Pi_{\tilde\Lambda}(\omega-\ombar)\right\|+ \left\|\Pi_{\tilde\Lambda}(\omega_0-\ombar)\right\|  { =\left\|\Pi_{\tilde\Lambda}(\omega-\ombar)\right\| } \leq
	\beta_{\Lambda}^{(s)}\ .
	$$
	It follows that
	$$
	\left|\omega\cdot k+\nu\cdot
	l\right|=\left|(\omega-\omega_0)\cdot k+ \omega_0\cdot k+\nu\cdot l
	\right| \geq \tilde\gamma^{(s)}_\Lambda-\beta^{(s)}_\Lambda K\geq \frac{\gamma^{(s)}_\Lambda}{2}\ ,
	$$
	provided $\tilde\gamma^{(s)}_\Lambda\geq \beta^{(s)}_\Lambda/2 K$,
	which is ensured by \eqref{imp.2.1}. This concludes Step 2.

	\begin{lemma}[{Non-overlapping of resonances for $\tilde \Lambda_\R = \tilde \Lambda'_\R$}]\label{lemma:nonOverlappingCenteredZones}
		Let $K\geq 1$, $s\in\{1,..,n\}$ and let $\Lambda  {\neq} \Lambda'$  {be two moduli in}
		$\MM_K^{(s)}$ such that $\tilde\Lambda_\R=\tilde\Lambda'_\R$.
		Assume that        
		\begin{equation}
			\label{cond.important}
			A_s\geq \tau(s+1)+1 \quad \text{and}\quad
			\frac{\Gamma}{4(s+1)^\tau C_s}\geq 1\ ,
		\end{equation}
		then one has 
		\begin{equation}\label{change.center}
			\dist{Z_{\Lambda}^{(s)}}{Z_{\Lambda'}^{(s)}} >
			\frac{\Gamma}{2\cC_\omega (s+1)^\tau
				K^{(s+1)\tau+1}}\min\left\{\frac{1}{|\tilde\Lambda|}\, ,\,
			\frac{1}{|\tilde\Lambda'|} \right\}\ .
		\end{equation}
	\end{lemma}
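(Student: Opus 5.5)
The plan is to reduce the statement to a lower bound on the distance between the two centers $\ombar_\Lambda$ and $\ombar_{\Lambda'}$ projected onto the common plane $\tilde\Lambda_\R=\tilde\Lambda'_\R$. Then we transfer this to action space through the Lipschitz bound $\|\omega(p)-\omega(p')\|\le \cC_\omega\|p-p'\|$. Write $\Pi:=\Pi_{\tilde\Lambda}=\Pi_{\tilde\Lambda'}$. If $p\in Z^{(s)}_\Lambda$ and $p'\in Z^{(s)}_{\Lambda'}$, then by the definition \eqref{beta.zeta} and the triangle inequality
\[
\cC_\omega\|p-p'\|\ \ge\ \|\Pi(\omega(p)-\omega(p'))\|\ \ge\ \|\Pi(\ombar_\Lambda-\ombar_{\Lambda'})\|-\beta^{(s)}_\Lambda-\beta^{(s)}_{\Lambda'} .
\]
So it suffices to show
\[
\|\Pi(\ombar_\Lambda-\ombar_{\Lambda'})\|\ \ge\ \frac{\Gamma}{(s+1)^\tau K^{(s+1)\tau+1}}\min\{|\tilde\Lambda|^{-1},|\tilde\Lambda'|^{-1}\},
\]
and then to absorb $\beta^{(s)}_\Lambda+\beta^{(s)}_{\Lambda'}\le 2C_s K^{-A_s}\max\{\ldots\}$ into half of it using \eqref{cond.important}. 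Strictly, the $\beta$'s carry different volumes, so I will handle the min/max carefully: bound each $\beta$ by $C_sK^{-A_s}$ times its own inverse volume, and compare with the inverse volume appearing in the center estimate below. Since $|\tilde\Lambda|\le K^s$, one may also trade volumes for powers of $K$ if needed.

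For the center estimate, I will argue as in Step 1 of Lemma \ref{divisors.1}. Since $\Lambda\neq\Lambda'$ are both $K$-maximal of the same rank, $\Lambda'\not\subset\Lambda$: otherwise $\Lambda'$ would be properly contained in a $K$-submodule of the same dimension, contradicting maximality. So pick a generator $(k',l')$ of $\Lambda'$ with $\|k'\|+\|l'\|\le K$ and $(k',l')\notin\Lambda$; by hypothesis $k'\in\tilde\Lambda'_\R=\tilde\Lambda_\R$. Take $\omega_0:=\ombar_\Lambda\in P_\Lambda$. Formulas \eqref{omega.0}--\eqref{diof.2} give
\[
|\ombar_\Lambda\cdot k'+\nu\cdot l'|\ \ge\ \frac{\Gamma}{|\tilde\Lambda|(s+1)^\tau K^{(s+1)\tau}} .
\]
Here we use that $l\Delta-\sum_j l_j\Delta_j\neq0$. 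Indeed, if it vanished, then $l^*=0$, so $(k',l')$ would lie in the rational span of $\Lambda$ intersected with $\Z^{n+m}$. Adjoining it to a basis of $\Lambda$ gives a $K$-submodule of rank $s$ properly containing $\Lambda$, contradicting maximality. On the other hand, $\nu\cdot l'=-\ombar_{\Lambda'}\cdot k'$ by Lemma \ref{basis}, so the left-hand side equals $|(\ombar_\Lambda-\ombar_{\Lambda'})\cdot k'|\le K\|\Pi(\ombar_\Lambda-\ombar_{\Lambda'})\|$, since $k'\in\tilde\Lambda_\R$. This yields the bound with the factor $|\tilde\Lambda|^{-1}$, hence with the min.

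The main obstacle is the bookkeeping in the final absorption: the two strips have widths involving different volumes, and the claimed constant has a factor $\tfrac12$ and a strict inequality. I would first try the symmetric argument, exchanging the roles of $\Lambda$ and $\Lambda'$, to get the center bound with the larger of $|\tilde\Lambda|^{-1},|\tilde\Lambda'|^{-1}$. Then, via $A_s\ge\tau(s+1)+1$ and $4(s+1)^\tau C_s\le\Gamma$, each strip width $C_sK^{-A_s}|\tilde\Lambda^{(\prime)}|^{-1}$ is at most a quarter of $\Gamma(s+1)^{-\tau}K^{-(s+1)\tau-1}\max\{|\tilde\Lambda|^{-1},|\tilde\Lambda'|^{-1}\}$. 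This leaves at least half of the max, which dominates half of the min. Strictness comes from the strict inequalities defining the open zones $Z^{(s)}_\Lambda$.
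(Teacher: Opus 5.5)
Your proof is correct, but it is organized differently from the paper's.

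The paper argues by contradiction inside the zones. For nearby points $p\in Z^{(s)}_\Lambda$, $p'\in Z^{(s)}_{\Lambda'}$, it applies the zone-wide bound of Lemma \ref{divisors.1} at $p$ to some $(k',l')\in\Lambda'\setminus\Lambda$. It then transports the resulting lower bound on $\|\Pi_{\tilde\Lambda'}(\omega(p)-\ombar_{\Lambda'})\|$ to $p'$ by the Lipschitz estimate, and this contradicts $p'\in Z^{(s)}_{\Lambda'}$. That bound carries $|\tilde\Lambda|^{-1}$, while the strip of $\Lambda'$ has width proportional to $|\tilde\Lambda'|^{-1}$. So the paper has to split according to the ratio $|\tilde\Lambda|/|\tilde\Lambda'|$, exchanging the two moduli in the second case, and it uses \eqref{cond.important} to make the two cases exhaustive.

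You instead separate the centers directly. Step 1 of Lemma \ref{divisors.1} at $\omega_0=\ombar_\Lambda\in P_\Lambda$, together with $\nu\cdot l'=-\ombar_{\Lambda'}\cdot k'$, bounds $\|\Pi(\ombar_\Lambda-\ombar_{\Lambda'})\|$ from below. Symmetrizing gives the larger inverse volume, and one triangle inequality then absorbs both strip widths, each at most a quarter of that bound.

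What your route buys:
\begin{itemize}
\item It avoids the case distinction and gives \eqref{change.center} with $\max$ in place of $\min$.
\item It actually delivers the stated constant. In the paper's Case 1 with $|\tilde\Lambda|\ge|\tilde\Lambda'|$, the subtraction $\frac{\Gamma}{2|\tilde\Lambda|(s+1)^\tau K^{\tau(s+1)+1}}-\cC_\omega\delta$ can vanish, so $\delta$ would have to be halved there.
\item You justify, via $K$-maximality, two facts the paper uses tacitly: that some generator of $\Lambda'$ lies outside $\Lambda$, and that $l\Delta-\sum_j l_j\Delta_j\neq 0$.
\end{itemize}
The paper's route has the merit of using Lemma \ref{divisors.1} as a black box.

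One small caveat concerns strictness. Strict inequalities for each pair $(p,p')$ only give $\ge$ for the infimum defining the distance. So your argument yields the strict inequality in \eqref{change.center} when $|\tilde\Lambda|\neq|\tilde\Lambda'|$, and otherwise needs an arbitrarily small amount of extra slack. This is harmless, since only positivity of the distance is used in Lemma \ref{lemma:quo.vadis}.
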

	\begin{proof}
		Let $\ombar$ and $\ombar'$ be the centers of the resonant planes
		associated to $\Lambda$ and $\Lambda'$ respectively. Denote
		$$
		\delta:=\frac{\Gamma}{2(s+1)^\tau\cC_\omega 
			K^{(s+1)\tau+1}}\min\left\{\frac{1}{|\tilde\Lambda|}\, ;\,
		\frac{1}{|\tilde\Lambda'|} \right\}\ .
		$$
		{Assume, for contradiction,} that there exist two points $p \in
		Z^{(s)}_{\Lambda}$, $p' \in Z^{(s)}_{\Lambda'}$ such that
		$$
		\left\|p-p'\right\|\leq \delta\ .
		$$
		We separate two cases, namely {the one in which}
		\begin{equation}
			\label{caso1}
			\frac{|\tilde\Lambda|}{|\tilde\Lambda'|}\leq
			\frac{\Gamma}{4C_s(s+1)^\tau} K^{A_s-\tau(s+1)-1}\ ,
		\end{equation}
		and the one in which {the  inequality above} is violated.
		
		We start by assuming \eqref{caso1}. Let $(k',l')\in\Lambda'$ be such
		that $\left\| k'\right\|+\left\| l'\right\|\leq K$ and $(k',l')\not \in
		 {\Lambda}$.  {Since by assumption $k \in \Lambda'_\R = \Lambda_\R$, we are in position to apply Lemma \ref{divisors.1} with $(k, l)$ replaced by $(k', l')$.} We {obtain}
		\begin{align*}
			\left\|\Pi_{\tilde\Lambda'}(\omega(p)-\ombar_{\Lambda'})\right\|\geq
			\frac{\left|(\omega(p)-\ombar_{\Lambda'})\cdot k'\right|}{\left\| k'\right\|}=
			\frac{|\omega(p)\cdot k'+\nu\cdot l'|}{\left\| k'\right\|}\geq
			\frac{\Gamma}{2|\tilde\Lambda|(s+1)^\tau K^{\tau(s+1)+1}}\ , 
		\end{align*}
		{so that we can also write}
		\begin{align*}
			\left\|\Pi_{\tilde\Lambda'}(\omega(p')-\ombar_{\Lambda'})\right\|=
			\left\|\Pi_{\tilde\Lambda'}(\omega(p')-\omega(p)+\omega(p)-\ombar_{\Lambda'})\right\|
			\\
			\geq
			\frac{\Gamma}{2|\tilde\Lambda|(s+1)^\tau
				K^{\tau(s+1)+1}}- \cC_\omega \left\|p-p'\right\|
			\geq \frac{\Gamma}{4|\tilde\Lambda|(s+1)^\tau
				K^{\tau(s+1)+1}}\ , 
		\end{align*}
		by the contradictory assumption. But, from \eqref{caso1}, we have that
		the r.h.s. is larger than $\beta^{(s)}_{\Lambda'}$, against the
		assumption that $p'\in Z^{(s)}_{\Lambda '}$. This concludes the
		argument in Case 1.
		
		For Case 2 we proceed exactly in the same way, but reversing the role
		of $\Lambda$ and $\Lambda'$. In this case we get a contradiction
		provided {that}
		\begin{equation}
			\label{caso2}
			\frac{|\tilde\Lambda'|}{|\tilde\Lambda|}\leq
			\frac{\Gamma}{4C_s(s+1)^\tau} K^{A_s-\tau(s+1)-1}\ .
		\end{equation}
		So the proof is completed if we {demonstrate} that \eqref{caso1} and
		\eqref{caso2} cover all the possible cases of ratios of the volume of
		the two modules. To prove this denote 
		$$
		x:=\frac{|\tilde\Lambda'|}{|\tilde\Lambda|}\ ,\quad
		y:=\frac{\Gamma}{4C_s(s+1)^\tau} K^{A_s-\tau(s+1)-1} \ ,
		$$
		then \eqref{caso1} and \eqref{caso2} can be rewritten respectively as
		$x\leq y$ and $x\geq 1/y$. Now any $x\in\R^+$ fulfills at least one of
		these equations if $y\geq 1$. But this is implied by condition \eqref{cond.important}.  
	\end{proof}
	
	\begin{lemma}[Non overlapping of resonances  {for $\tilde \Lambda_\R \neq \tilde \Lambda'_\R$}]
		\label{lemma:nonOverlappingResonances}
		Let $K\geq 1$ and $s\in\{1,..,n-1\}$ .
		Let $\Lambda,\Lambda' \in \MM_K^{(s)}$ be such that
		$\tilde \Lambda_{\R}\neq \tilde \Lambda'_{\R}$.
		{Assume that condition \eqref{imp.2} holds.}
	Then for any   $p \in B^{(s)}_{\Lambda}$ one has that
	\[
	\dist{\overline{F_{\Lambda}^{(s)}(p)}}{Z_{\Lambda'}^{(s)}} > 0\,.
	\]
\end{lemma}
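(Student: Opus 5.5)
The plan is to show that every point $q\in\overline{F_{\Lambda}^{(s)}(p)}$ is strictly separated from the exact resonance condition defining $Z^{(s)}_{\Lambda'}$. That is, we aim for
\[
\left\|\Pi_{\tilde\Lambda'}\bigl(\omega(q)-\ombar_{\Lambda'}\bigr)\right\|\geq c>\beta^{(s)}_{\Lambda'} .
\]
Positivity of the distance then follows from continuity of $\omega$ on the compact set $\overline{F_{\Lambda}^{(s)}(p)}$ (uniform Lipschitz bound $\cC_\omega$). Concretely, any point within distance $(c-\beta^{(s)}_{\Lambda'})/\cC_\omega$ of $\overline{F}$ still violates the defining inequality of $Z^{(s)}_{\Lambda'}$.

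\textbf{Key steps.}

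First, since $\tilde\Lambda_\R\neq\tilde\Lambda'_\R$ and both have dimension $s$, pick a basis element $(k',l')$ of $\Lambda'$ with $\|k'\|+\|l'\|\leq K$ and $k'\notin\tilde\Lambda_\R$. Such an element exists, since otherwise $\tilde\Lambda'_\R\subseteq\tilde\Lambda_\R$, and equal dimensions would force equality.

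Next, apply Lemma~\ref{cor:small.divisors} to $(k',l')$ at points of $F^{(s)}_\Lambda(p)$, and by continuity at points of its closure. This gives
\[
|\omega(q)\cdot k'+\nu\cdot l'|\geq \gamma^{(s)}_\Lambda/2 = \frac{C_{s+1}}{6K^{A_{s+1}}|\tilde\Lambda|}.
\]
Since $(k',l')\in\Lambda'$, we have $\omega\cdot k'+\nu\cdot l'=(\omega-\ombar_{\Lambda'})\cdot k'$. Hence
\[
\|\Pi_{\tilde\Lambda'}(\omega(q)-\ombar_{\Lambda'})\|\geq \frac{C_{s+1}}{6K^{A_{s+1}+1}|\tilde\Lambda|}.
\]

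Finally, compare this bound with $\beta^{(s)}_{\Lambda'}=C_s/(K^{A_s}|\tilde\Lambda'|)$. Using $C_{s+1}\geq 6C_s$ and $A_{s+1}\leq A_s-1$ from \eqref{imp.2}, we get
\[
\frac{C_{s+1}}{6K^{A_{s+1}+1}|\tilde\Lambda|}\geq \frac{C_s}{K^{A_s}|\tilde\Lambda|}.
\]

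\textbf{Main obstacle.} The last comparison leaves the mismatch between $|\tilde\Lambda|$ and $|\tilde\Lambda'|$, and the inequalities must be strict. I would first try to avoid the mismatch by exploiting symmetry, as in Lemma~\ref{lemma:nonOverlappingCenteredZones}. Alternatively, I would use the lower bound on $\|k'\|$ relative to $|\tilde\Lambda'|$ by choosing $k'$ cleverly. One option is to replace the division by $\|k'\|$ with projection onto the component of $k'$ orthogonal to $\tilde\Lambda_\R$, mimicking the volume estimate \eqref{volumi}. Another is to use directly that $p\in B^{(s)}_\Lambda$ excludes $Z^{(s+1)}_{\Lambda^+}$ with $\Lambda^+\supseteq\Lambda+\Z(k',l')$, as in Step 2 of Lemma~\ref{cor:small.divisors}.

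If the volume factors cannot be reconciled this way, I would use the weaker but sufficient fact that the claim only asks for positive distance, not a quantitative one. The idea is to argue by contradiction. Suppose some $q\in\overline{F_\Lambda^{(s)}(p)}$ lies in $\overline{Z^{(s)}_{\Lambda'}}$. Then $q$ is simultaneously near-resonant with $\Lambda$ and $\Lambda'$, hence near-resonant with the $(s+1)$-module $\Lambda^+$ generated by $\Lambda$ and $(k',l')$. Lemma~\ref{lemma:diameters} keeps $q$ close to $p$, and \eqref{imp.2} then puts $p\in Z^{(s+1)}$, contradicting $p\in B^{(s)}_\Lambda$. Strictness comes from the factor-$2$ margins in Lemma~\ref{cor:small.divisors}.
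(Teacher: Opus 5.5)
Your three key steps are correct, and they are exactly the paper's Case 1. They prove the claim precisely when $|\tilde\Lambda|/|\tilde\Lambda'| < \frac{C_{s+1}}{6C_s}K^{A_s-1-A_{s+1}}$. The gap is the complementary regime $|\tilde\Lambda|\gg|\tilde\Lambda'|$, where $Z^{(s)}_{\Lambda'}$ is much wider than $Z^{(s)}_\Lambda$. This regime is not marginal: for $\alpha_s=1$ one has $A_{s+1}=A_s-1$, so your condition only allows volume ratios bounded by a constant.

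None of your remedies works there.
\begin{itemize}
\item \textbf{Choosing $k'$ cleverly.} You would need $\|k'\|<\frac{C_{s+1}}{6C_s}K^{A_s-A_{s+1}}|\tilde\Lambda'|/|\tilde\Lambda|$. For $s\geq 2$ this can be $<1$, for instance when $|\tilde\Lambda'|\sim 1$ and $|\tilde\Lambda|\sim K^2$.
\item \textbf{Symmetry.} This is unavailable. Lemma \ref{cor:small.divisors} needs a base point in a block $B^{(s)}_\Lambda$, unlike Lemma \ref{divisors.1}, which holds on the whole zone. Points of $F^{(s)}_\Lambda(p)$ need not lie in any $\Lambda'$-block.
\item \textbf{Your fallback.} It brings no new information. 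With $\Lambda^+\supseteq\Lambda+\Z(k',l')$, the exclusion $p\notin Z^{(s+1)}_{\Lambda^+}$ is exactly what Step 2 of Lemma \ref{cor:small.divisors} already converts into $|\omega\cdot k'+\nu\cdot l'|\geq\gamma^{(s)}_\Lambda/2$. Concretely, the component of $\Pi_{\tilde\Lambda^+}(\omega(q)-\ombar_{\Lambda^+})$ along $\Pi^\perp_{\tilde\Lambda}k'$ is controlled only through $|\omega(q)\cdot k'+\nu\cdot l'|\leq\|k'\|\beta^{(s)}_{\Lambda'}$. The only general lower bound on the width is $\beta^{(s+1)}_{\Lambda^+}\geq C_{s+1}/(K^{A_{s+1}}|\tilde\Lambda|\,\|\Pi^\perp_{\tilde\Lambda}k'\|)$. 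Comparing the two reproduces the same restriction on $|\tilde\Lambda|/|\tilde\Lambda'|$.
\end{itemize}

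The missing idea is to reverse the roles of the two modules while still using only the block information at $p$.
\begin{itemize}
\item Take $(k,l)\in\Lambda$ with $\|k\|+\|l\|\leq K$ and $k\notin\tilde\Lambda'_\R$. Let $\Lambda^+$ be the $K$-maximal module containing $\Lambda'$ and $(k,l)$, and set $\tnu:=\Pi^\perp_{\tilde\Lambda'}k$. Then \eqref{ecco.perche} gives $|\tilde\Lambda^+|\leq|\tilde\Lambda'|\,\|\tnu\|$, so now the small volume enters.
\item In this regime $\beta^{(s)}_\Lambda\leq\beta^{(s)}_{\Lambda'}$. Hence if $\tilde p\in\overline{F^{(s)}_\Lambda(p)}\cap Z^{(s)}_{\Lambda'}$, then $|(\omega(\tilde p)-\ombar_{\Lambda^+})\cdot\tnu|\leq 2K\beta^{(s)}_{\Lambda'}$.
\item The orthogonal splitting \eqref{e.ovvio} then gives $\|\Pi_{\tilde\Lambda^+}(\omega(\tilde p)-\ombar_{\Lambda^+})\|\leq\sqrt5\,C_s/(K^{A_s-1}|\tilde\Lambda'|\,\|\tnu\|)$.
\item To pass from $\tilde p$ to $p$, use $\cC_\omega\|\tilde p-p\|\leq\gamma^{(s)}_\Lambda/(2K)\leq\beta^{(s)}_{\Lambda'}\leq C_s/(K^{A_s-1}|\tilde\Lambda^+|)$. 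The first inequality follows from Lemma \ref{lemma:diameters} and \eqref{le.puntine}. The middle one is again the regime condition.
\item Finally \eqref{imp.2} yields $p\in Z^{(s+1)}_{\Lambda^+}$, contradicting $p\in B^{(s)}_\Lambda$.
\end{itemize}
So the proof needs an explicit case split on $|\tilde\Lambda|/|\tilde\Lambda'|$: your argument on one side, and this role-reversed $\Lambda^+$ on the other.
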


\begin{proof}
	We divide our analysis in two cases:
	\begin{itemize}
		\item{\sc Case 1:} $\Lambda$ and $\Lambda'$ are such that
		\begin{equation}\label{m'.larger.than.m}
			\frac{|\tilde\Lambda|}{|\tilde\Lambda'|} < \frac{C_{s+1} K^{A_s - 1 - A_{s+1}}}{6 C_s}\,; 
		\end{equation}
		\item {\sc Case 2:} $\Lambda$ and $\Lambda'$ are such that
		\begin{equation}\label{case.2}
			\frac{|\tilde\Lambda|}{|\tilde\Lambda'|} \geq \frac{C_{s+1} K^{A_s - 1 - A_{s+1}}}{6 C_s}\,.
		\end{equation}
	\end{itemize}
	{\sc Case 1:} By assumption there exists $(k',l')\in\Lambda'$
	with {$\left\|k'\right\|+\left\|l'\right\|\leq K$}
	s.t. $k'\not\in \Lambda_\R$. 
	Then for any $\wt{p} \in
	F^{(s)}_{M, \ombar}(p)$, by Lemma \ref{cor:small.divisors}
	one has
	\begin{align*}
		\| \Pi_{\tilde\Lambda'} (\omega(\wt{p}) - \ombar_{\Lambda'})\|
		&\geq \frac{|(\omega(\wt{p})- \ombar_{\Lambda'}) \cdot
			k'|}{\|k'\|}=\frac{\left|\omega(\tilde p)\cdot k'+\nu\cdot
			l'\right|}{\|k'\|} \geq \frac{\gamma^{(s)}_{\Lambda}}{2K}
		\\ & = \frac{C_{s+1}}{6 K^{A_{s+1}+1}|\tilde\Lambda|} =
		\frac{C_{s+1} |\tilde\Lambda'|}{6 C_s|\tilde\Lambda|} K^{A_s -
			A_{s+1} - 1} \beta^{(s)}_{\tilde\Lambda'}\,.
	\end{align*}
	Now, since we are in Case 1, namely \eqref{m'.larger.than.m} holds, we deduce
	$$
	\| \Pi_{\tilde\Lambda'} (\omega(\wt{p}) - \ombar_{\lambda'})\| \geq  \frac{C_{s+1} |\tilde\Lambda'|}{6 C_s|\tilde\Lambda|}  K^{A_s - A_{s+1} - 1} \beta^{(s)}_{\Lambda'} > \beta^{(s)}_{\Lambda'}\,,
	$$
	namely $\wt{p} \notin Z^{(s)}_{\Lambda'}$.\\

	\noindent        {\sc Case 2:} By assumption 
	there exists $(k,l)\in\Lambda$
	with $\left\|k\right\|+\left\|l\right\|\leq K$
	s.t. $k\not\in \Lambda'_\R$.      Let
	$\left\{v'_j=(k'_j,l'_j))\right\}_{j=1}^s$ be a  {basis (over the integers) of } 
	$\Lambda'$ fulfilling
	$\left\|k'_j\right\|+\left\|l'_j\right\|\leq K$. Define
	$\Lambda^+$  {as the maximal $K$-modulus containing $\mathring{\Lambda}' := \operatorname{span}_\Z\left\{v'_1,...,v'_s,(k,l)\right\}$.} Consider
	the centers $\ombar_{\Lambda'}$ and
	$\ombar_{\Lambda^+}$; by their definition one has 
	\begin{equation}
		\label{pi.emmeprimo.zero}
		\Pi_{\tilde\Lambda'} (\ombar_{\Lambda'} -\ombar_{\Lambda^+}) = 0\,.
	\end{equation}
	We define
	\begin{equation}\label{def.nu}
		\tnu := \Pi_{\tilde\Lambda'}^\bot k\,.
	\end{equation}
	Note that,  {defining $\mathring{\cK}^+$ as the matrix with columns $[k'_1, \dots, k'_s, k]$, by \eqref{volume.2} with $\tilde \Lambda$ replaced by $\tilde \Lambda^+$ and $\mathring{\cK}$ replaced by $\mathring{\cK}^+$, one gets}
	\begin{equation}
		\label{ecco.perche}
		|\tilde\Lambda^+|  {\leq} \sqrt{\det \left( (\mathring{\cK}^+)^t \mathring{\cK}^+ \right)}\leq |\tilde\Lambda'| \|\tnu\|\,.
	\end{equation}
	{Furthermore $\tnu \in \tilde\Lambda^+$, since $\tilde\Lambda^+ \supset \tilde\Lambda'$,  $\tnu
		\in (\tilde\Lambda')^\bot$, and $\tnu \neq 0$, since $k \notin \tilde\Lambda'$ by construction. 
		Therefore, one has
		\begin{equation}\label{e.ovvio}
			\| \Pi_{\tilde\Lambda^+} v\|^2 =
			\|\Pi_{\tilde\Lambda'} v\|^2 + \frac{|v \cdot
				\tnu|^2}{\|\tnu\|^2}\ ,\quad \forall v \in
			\reals^n\,.
		\end{equation}
	}
	{We proceed by contradiction: assume {that there
			exists $\wt{p}\in F^{(s)}_{\Lambda}(p)$ such that}
		$\wt{p}\in Z^{(s)}_{\Lambda'}$; we then prove that $
		p\in Z^{(s+1)}_{\Lambda^+}$, which is absurd since $
		p\in B^{(s)}_{\Lambda}$, which does not intersect the zones of
		order $s+1$ by its very definition.  }
	
	{ First remark that, {since by definition $F^{(s)}_{\Lambda}(p)
			\subseteq Z^{(s)}_{\Lambda}$, and by construction we are taking
			$\wt{p} \in \overline{F^{(s)}_{\Lambda}(p)}$, we also have
			$\wt{p} \in \overline{Z^{(s)}_{\Lambda}}$}. Then under our
		contradictory assumption, using \eqref{pi.emmeprimo.zero} we have
		\begin{equation}\label{forse}
			\begin{aligned}
				\left|(\omega(\wt{p})-\bar\omega_{\Lambda^+})\cdot
				\tnu\right|&=\left|(\omega(\wt{p})-\bar\omega_{\Lambda^+})\cdot ( {k}-\Pi_{M'} {k})
				\right| \\
				&= \left|(\omega(\wt{p})-\bar\omega)\cdot k- \Pi_{\tilde\Lambda'}(\omega(\wt{p})-\bar\omega_{\Lambda'})\cdot k\right|  \\
				&\leq \beta^{(s)}_\Lambda
				K+\left\|\Pi_{\tilde\Lambda'}(\omega(\wt{p})-\bar\omega_{\Lambda'}
				)\right\|\left\| {k}\right\|\\
				& \leq(\beta^{(s)}_\Lambda
				+\beta^{(s)}_{\Lambda'})K\leq 2\beta^{(s)}_{\Lambda'}K\,,
			\end{aligned}
		\end{equation}
		where in the last passage we have used the fact that, since we are in Case 2,
		$$
		\beta^{(s)}_\Lambda \leq \frac{6 C_{s}}{C_{s+1}K^{A_s - 1- A_{s+1}}}
		\beta^{(s)}_{\Lambda'} \leq \beta^{(s)}_{\Lambda'}\,,
		$$
		and {we have also taken \eqref{imp.2} into account.}
		Therefore, {by} exploiting identities \eqref{e.ovvio} and \eqref{pi.emmeprimo.zero} and {by} the fact that $K/\left\|\tnu\right\|>1$, we obtain
		\begin{equation}	\label{overlappo}
			\begin{aligned}
				\left\|\Pi_{\tilde\Lambda^+}(\omega(\wt{p})-\bar\omega_{\Lambda^+})\right\|^2 & =
				\left\|\Pi_{\tilde\Lambda'}(\omega(\wt{p})-\bar\omega_{\Lambda^+})\right\|^2+\frac{\left|(\omega(\wt{p})-\bar\omega_{\Lambda^+})\cdot\tnu\right|^2}{\left\|\tnu\right\|^2}\\
				&
				\leq
				\left(\beta^{(s)}_{\Lambda'}\right)^2+\left(\frac{2K\beta^{(s)}_{\Lambda'}}{\left\|\tnu\right\|}\right)^2\\
				&\leq
				5\left(\frac{K}{\left\|\tnu\right\|}\beta^{(s)}_{\Lambda'}\right)^2
				=5 \left(\frac{C_s}{K^{A_s-1}|\tilde\Lambda'|\left\|\tnu\right\|}\right)^2\,.
			\end{aligned}
		\end{equation}
		We now recall that, by Lemma \ref{lemma:diameters} and assumption \eqref{le.puntine} one has
		$$
		\left\|\omega(\tilde p)-\omega(p)\right\|\leq
		\cC_\omega \|\wt{p} - p\| \leq \frac{\gamma^{(s)}_\Lambda}{2 K
		}\, ,
		$$
		 {with}
		\begin{equation*}
			\begin{aligned}
				\frac{\gamma^{(s)}_\Lambda}{2 K } \stackrel{\eqref{gamma.m}}{=}
				\frac{C_{s+1}}{6 C_s } K^{A_s - A_{s+1}-1} \beta^{(s)}_\Lambda
				\stackrel{\eqref{case.2}}{\leq} {\beta^{(s)}_{\Lambda'}}=\frac{C_s}{K^{A_s}\left|\tilde\Lambda'\right|}
				\\
				\leq \frac{C_s K}{ K^{A_s}\|\tnu\| |\tilde\Lambda'|}  {\leq}
				\frac{C_s}{K^{A_s-1}\left|\tilde\Lambda^+\right|}\,, 
			\end{aligned}
		\end{equation*}
		 {namely
			\begin{equation}\label{oppure.no}
				\left\|\omega(\tilde p)-\omega(p)\right\| \leq \frac{C_s}{K^{A_s-1}\left|\tilde\Lambda^+\right|}\,.
			\end{equation}
		}
		Then, combining  \eqref{overlappo} and \eqref{oppure.no} and using \eqref{ecco.perche}, we get
		\begin{equation}\label{giammai}
			\begin{aligned}
				\|\Pi_{\tilde\Lambda^+} (\omega(p) - \ombar_{\Lambda^+})\| &\leq \|\Pi_{\tilde\Lambda^+} (\omega(p) - \omega(\wt{p}))\| + \| \Pi_{\tilde\Lambda^+} (\omega(\wt{p}) - \ombar_{ {\Lambda^+}})\|\\
				&\leq \frac{C_s}{K^{A_s -1} {|\tilde \Lambda^+|}} + \frac{\sqrt{5} C_s}{K^{A_s - 1} |\tilde\Lambda'| \|\tnu\|} 
				\leq \frac{C_{s+1}}{K^{A_{s+1}}|\tilde\Lambda^+|}\,,
			\end{aligned}
		\end{equation}
		where in the last passage we have used again \eqref{imp.2}.
		Then Eq.  {\eqref{giammai}} shows that
		$p\in Z^{(s+1)}_{\Lambda^+}$, which contradicts the fact that $
		p$ is in the block $B^{(s)}_{\Lambda}$.}
\end{proof}

\subsection{Relationship among the constants.}
\label{le.costanti.s}

First we {choose} the constants $A_s$ in order to satisfy {the exact equality in condition
\eqref{cond.as.1}, that tunes the estimates on the small divisors}. By defining
\begin{equation}
	\label{Bs}
	B_s:=A_s+s\ ,\quad\forall s=1,...,n
\end{equation}
one gets the recursion $B_{s+1}=B_s/\alpha_s$ which leads to {the following relations:}
\begin{equation}
	\label{1.1}
	B_s:=\left[\prod_{i=1}^{s-1}\frac{1}{\alpha_i}\right]B_1\quad
	\Longrightarrow \quad
	A_s:=\left[\prod_{i=1}^{s-1}\frac{1}{\alpha_i}\right](A_1+1)-s\ ,\quad
	\forall s=2,...,n\ .
\end{equation}
{With this choice, the first condition in \eqref{imp.2}
is satisfied too.}

{Still with respect to the estimates on the small divisors,} in order to fulfill {hypothesis} \eqref{cond.as.2}, we take
\begin{equation}\label{quante.cose}
	C_{s+1} := 12 \cC_\omega \left(\frac{4
	C_s}{ {D_s}}\right)^{\frac{1}{\alpha_s}}\,, \quad
	\forall s=1,...,n-1\ ,
\end{equation}
which amounts to define {the quite cumbersome recursion}
\begin{equation}
	\label{Cs}
	C_s:=4^{\sum_{j=1}^{s-1}\prod_{i=j}^{s-1}\frac{1}{\alpha_i}}\left(12
	\cC_\omega \right)^{1+\sum_{j=2}^{s-1}\prod_{i=j}^{s-1}\frac{1}{\alpha_i}
	}  {\prod_{j = 1}^{s-1}\left( \frac{1}{D_j}\right)^{\prod_{i = j}^{s-1} \frac{1}{\alpha_i}} } C_1^{\prod_{i=1}^{s-1}\frac{1}{\alpha_i}}\ . 
\end{equation}
Note that, provided $C_1$ is small enough, \eqref{Cs} also guarantees {that}
\begin{equation}\label{Cs.small.1}
	C_s \leq \min\left\{\frac{1}{4}; \frac{\Gamma}{ {4}(s+1)^\tau}\right\} \quad
	\forall s=1,...,n\ ,\,
\end{equation}
and thus the validity of the second {condition \eqref{cond.important} on the non-overlapping of resonances, and of hypothesis
\eqref{imp.2.1} appearing in the small divisors estimates.} 
Then {due to \eqref{quante.cose},  {\eqref{Cs.small.1},} and to the fact that $\alpha_s \geq 1$, one easily checks that  the sequence $C_s$ is increasing in $s$, so that the second condition in \eqref{imp.2} is also automatically satisfied.}

We still have to fix $C_1$ and $A_1$. Consider the first {inequality in \eqref{imp.2.1}}.
{{As $A_s$ decreases with $s$} and the r.h.s. of \eqref{imp.2.1} {increases}, it is sufficient to choose
	\begin{equation}
		\label{Bd.choice}
		A_n := \tau(n+1) + 1\quad \Rightarrow \quad B_n= (\tau+1)(n+1)\,.
	\end{equation}
}
Therefore \eqref{cond.important} is satisfied for any $s$, and we can define
\begin{equation}
	\label{B1}
	B_1=\left[\prod_{i=1}^{n-1}\alpha_i\right]B_n\ ,
\end{equation}
which will determine the exponent $a$ controlling the stability times,
as well as the value of $A_1$.

In view of the application of the normal form lemma we also need to
compare the quantities $\tilde\gamma^{(s)}_\Lambda$ and $\gamma^{(s)}_\Lambda$  {defined in \eqref{gamma.m}, \eqref{gamma.m.1}}. We
remark that,  {using \eqref{imp.2.1} and \eqref{1.1},} one has
\begin{equation}
	\label{le.gamme}
	\tilde\gamma^{(s)}_\Lambda\geq \gamma^{(s)}_\Lambda\,,
\end{equation}
which will be useful in the next Section.

The radii $d_s$  {appearing in \eqref{def:salsicciotti}} will be chosen {suitably} in the next section.

\section{Analytic part}\label{analytic}

In the spirit of \cite{poe}, we start by giving the following definitions:
\begin{definition}\label{def:M_admissible}
	Given $\Lambda \subseteq \integers^{n}\times\Z^m$, $ {K'} \in \naturals$ and $\ta >0$, we say that a subset $D \subseteq \cU$ is \textbf{$(\ta,  {K'})$-nonresonant modulo $\Lambda$} if {for every} $p \in D$ one has
{	\begin{equation}
		|\omega(p)\cdot k + \nu \cdot l | \geq \ta \quad \forall (k, l) \in \integers^{n+m} \quad \text{s.t. } (k,l) \notin \Lambda\,, \quad \|k\|_1 + \|l\|_1 \leq  {K'}\,,
	\end{equation}
	where $\|\cdot\|_1$ indicates the standard $\ell^1$-norm.}
\end{definition}
\begin{definition} 	\label{definition:normalForm}
	A function $Z: \reals^n \times \torus^n \times \torus^m \to \reals$ is in \textbf{$\Lambda$-resonant normal form} if {its Fourier expansion reads}
	\[
	Z(p,q,\phi) = \sum_{(k,l) \in \Lambda} \hat{Z}_{kl}(p)
	e^{ik\cdot q+il\cdot \phi}\,.
	\]
\end{definition}

\begin{definition}
	\label{norme.analitiche}
	Let $D\subset\bC^n$, and, for $\sigma' > r>0$, {let $f$ be an analytic function
	on $D_{r}\times\torus^n_{2\sigma'}\times\torus^m_{2\sigma'}$, then we
	denote its Fourier norm by
	\begin{equation}
		\label{def.sigma}
		\left\|f\right\|_{r,\sigma'}:=\sup_{p\in
			D_r }\sum_{(k,l)\in \mathbb Z^{n+m}}\left|f(p)
		\right|e^{(\|k\|_1+\|l\|_1)\sigma'}\ .
	\end{equation}}
	If $r=\sigma'$ we will simply write $\left\|f
	\right\|_{\sigma'}:=\left\|f\right\|_{\sigma',\sigma'}$. 
\end{definition}

We recall the following lemma:
\begin{lemma}[Normal form lemma of \cite{poe}]\label{lem:lo.dice.poschel}
	Let $ {K'} \in \naturals,$ $\ta >0,\ r>0$,  {$\e'>0$} and $\Lambda \subseteq \integers^n\times\Z^m$. Suppose $D\subseteq \cU^{\mathbb{R}}_{\frac{ {\sigma'}}{2}}$ is $(\ta,  {K'})$-nonresonant modulo $\Lambda$, and that
	\begin{equation}\label{hyp:smallness}
		 {\e'} \le \frac{\ta r}{9 \cdot 2^7  {K'}}\,, \quad r \leq \min \left\lbrace \frac{8 \ta}{9  {K'} \cC_\omega}\,,  {\sigma'} \right\rbrace\,, \quad  {K'}  {\sigma'} \geq 6\,.
	\end{equation}
	Consider a Hamiltonian
	$$
	\widetilde{H}(P, Q) = \widetilde{h}_0(P) + V(P, Q)
	$$
	with $\widetilde{h}_0$ and $V$ analytic on $(D \times \reals^m)_{ {\sigma'}} \times \torus^{n+m}_{ {\sigma'}}$, and $\|V\|_{ {\sigma'}} \leq  {\e'}$.\\
	Then there exists a real analytic, symplectic {change of coordinates
	$$
	\Psi: (D \times \reals^m)_{\frac{r}{2}} \times \torus_{\frac{ {\sigma'}}{6}}^{n+m} \rightarrow (D \times \reals^m)_{r} \times \torus_{ {\sigma'}}^{n+m}
	$$
	satisfying the following properties: 
	\begin{itemize}
		\item[(i)]
		the composition reads
		\begin{equation}
			\widetilde{H} \circ \Psi = \widetilde{h}_0 + Z + V_*\ ,
		\end{equation} and $\widetilde{h}_0 + Z$ is in $\Lambda$-resonant normal form;
		\item[(ii)] $V_*$ is an exponentially small remainder (with respect to $K$), namely 
		$$
		\|V_*\|_{\frac{r}{2}, \frac{ {\sigma'}}{6}} \leq e^{- \frac{ {K' \sigma'}}{6}} {\e'}\ ;
		$$
		\item[(iii)] if $(P, Q):= \Psi(P', Q')$, then
		\begin{equation}
			\label{defo}
			\|P' - P\| \leq \frac{18  {K'\e'}}{\ta}\leq\frac{1}{2^6}r
		\end{equation}
		uniformly on $(D \times \reals^m)_{\frac r 2} \times \torus^{n+m}_{\frac{ {\sigma'}}{6}}$\,.
	\end{itemize}}
\end{lemma}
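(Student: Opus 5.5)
The plan is to reduce the statement to P\"oschel's normal form lemma in \cite{poe} by extending the phase space, as anticipated in the Introduction. I introduce actions $J\in\R^m$ conjugate to the angles $\phi=(\nu_1t,\dots,\nu_mt)$ and consider the autonomous Hamiltonian
\begin{equation*}
\widetilde H(P,Q)=\widetilde h_0(P)+V(P,Q),\qquad P=(p,J),\quad Q=(q,\phi),\quad \widetilde h_0(p,J):=h_0(p)+\nu\cdot J .
\end{equation*}
Its frequency map is $\nabla\widetilde h_0(p,J)=(\omega(p),\nu)$, so the small divisors of the extended system are exactly $\omega(p)\cdot k+\nu\cdot l$. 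This quantity does not depend on $J$. Hence $(\ta,K')$-nonresonance of $D$ modulo $\Lambda$ in the sense of Definition \ref{def:M_admissible} is precisely P\"oschel's nonresonance condition for the set $D\times\R^m$ in the $(n+m)$-dimensional action space.

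Moreover, $\partial^2\widetilde h_0$ is block diagonal with blocks $\partial_p^2h_0$ and $0$. Its norm is therefore bounded by $\cC_\omega$, so the constant playing the role of the Lipschitz bound of the frequency map is unchanged. With these two observations the hypotheses \eqref{hyp:smallness} are literally those of \cite{poe}. The conclusions (i)--(iii) then follow: the normal form $Z$ contains only Fourier modes $(k,l)\in\Lambda$, the remainder gains the factor $e^{-K'\sigma'/6}$, and the action deformation bound \eqref{defo} holds on the $J$-unbounded strip.

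Since P\"oschel's lemma is stated for a bounded domain, I would still recheck his iterative proof. The goal is only to confirm that nothing there uses boundedness in $J$, and that all estimates depend on $J$ only through the norm \eqref{def.sigma}, which is a supremum over the strip.

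The scheme I would retrace is the following. Take about $N\simeq K'\sigma'/6$ Lie-transform steps on domains shrinking in $r$ and $\sigma'$ by amounts $\sim 1/N$. At each step, split the current perturbation into three parts: the $\Lambda$-resonant part, the nonresonant modes with $\|k\|_1+\|l\|_1\le K'$, and the high modes. The high modes are already bounded by $e^{-K'\sigma'}$ times the perturbation size. For the nonresonant low modes, solve the homological equation $\{\widetilde h_0,\chi\}=f$ mode by mode, with coefficients $\hat f_{kl}/(i(\omega(p)\cdot k+\nu\cdot l))$. On the complex neighborhood $D_r$, the divisor stays $\ge \ta/2$ because $|(\omega(p)-\omega(p_0))\cdot k|\le \cC_\omega rK'\le \frac{8}{9}\ta$, which is supplied by the second condition in \eqref{hyp:smallness}. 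Cauchy estimates then bound the new perturbation by a fixed fraction (say $1/2$) of the old one, thanks to $\e'\le \ta r/(9\cdot2^7K')$. After $N$ steps this gives the factor $2^{-N}\le e^{-K'\sigma'/6}$. Summing the bounds on $\partial_Q\chi$ over all steps yields $\|P'-P\|\le 18K'\e'/\ta$, and the condition on $\e'$ turns this into $\le r/2^6$.

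I expect the main obstacle to be exactly this divisor control on the complex extension. It must be uniform in the unbounded $J$ directions and robust under the shrinking of the domain in $p$. Both points are handled by the $J$-independence of the divisors and the choice $r\le 8\ta/(9K'\cC_\omega)$. Beyond that, the only care needed is to keep the domains in the form $(D\times\R^m)_r$ so the flows of the generating functions remain well defined there.
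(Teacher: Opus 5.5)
Your main line coincides with the paper's. The paper gives no proof of this lemma: it recalls P\"oschel's lemma from \cite{poe} and applies it to the extended Hamiltonian $h_0(p)+\nu\cdot J+\varepsilon V$. Your two observations are exactly what make this legitimate: the divisors $\omega(p)\cdot k+\nu\cdot l$ do not depend on $J$, and $\partial^2\widetilde h_0$ has a vanishing $J$-block. The unbounded $J$-directions are harmless because all norms are suprema over the strip.

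Two claims in your supplementary sketch are wrong, however.

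First, the condition $r\le 8\ta/(9K'\cC_\omega)$ only yields $|\omega(p)\cdot k+\nu\cdot l|\ge \ta-\tfrac89\ta=\ta/9$ on the complex neighbourhood $D_r$, not $\ta/2$. Strictly, it also needs a bound on $\partial_p^2 h_0$ at complex points, whereas $\cC_\omega$ is a supremum over real ones. The constants in the statement are calibrated to $\ta/9$. With $\|\chi_j\|\le 9\eta_j/\ta$ and $\sum_j\eta_j\le 2\e'$, the action displacement is at most $K'\sum_j\|\chi_j\|\le 18K'\e'/\ta$, which is \eqref{defo}; your $\ta/2$ would give $4K'\e'/\ta$ instead. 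So \eqref{hyp:smallness} is P\"oschel's hypothesis only after renormalising the divisor bound, not literally.

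Second, the delicate point is not divisor control but the contraction step. Suppose, as you describe, all domains shrink uniformly by $r/2N$ in $P$ and by $\sim\sigma'/N$ in $Q$. The bracket of the accumulated resonant part $Z$ (of size $\sim\e'$ and degree $\le K'$) with $\chi_j$ contains $\partial_Q Z\cdot\partial_P\chi_j$. Your Cauchy estimates only bound this term by $\frac{18NK'\e'}{\ta r}\,\eta_j\le\frac{N}{2^6}\,\eta_j$. That bound is useless as soon as $N\gtrsim 2^5$. Moreover, $N\simeq K'\sigma'/6$ is large precisely in the regime used later, where $K\sim\varepsilon^{-\beta}$.

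Run as written, your sketch would therefore only give a smallness condition depending on $K'\sigma'$, roughly $\e'\lesssim \ta r/(K'^2\sigma')$, and hence worse exponents. Obtaining the $N$-independent condition $\e'\le \ta r/(9\cdot 2^7K')$ is where the real work of \cite{poe} lies. Your proof should rest on the citation, as the paper's does, and not on this sketch.
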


Since the Hamiltonian \eqref{hamiltoniana} is time dependent, in order to apply the Normal Form Lemma \ref{lem:lo.dice.poschel}, we reduce to the time independent case, namely we extend the phase space as usual, and we define
\begin{equation}\label{ham.no.time}
	\widetilde{H}(p, q, J, \phi) := h_0(p) + \nu \cdot J + \e V(p, q, \phi)\,, \quad J \in \reals^m\,,
\end{equation}
and we consider the associated Hamiltonian system with variables $(P, Q) := (p, J, q, \phi)$ and symplectic form $\Omega(P, Q) := d p \wedge d q + d J \wedge d \phi\,.$

We then can use Lemma \ref{lem:lo.dice.poschel} put the Hamiltonian
\eqref{ham.no.time} in normal form in each fast drift block.
First, we link $K$ with $\varepsilon$ as follows:
\begin{equation}
	\label{def:constantK}
	K:=\frac{1}{(G\varepsilon)^{\betas}}\ , 
\end{equation}
with $G\geq 1$ and $\betas$ two {parameters to be determined later in the paper}. Then we
apply Lemma \ref{lem:lo.dice.poschel} with different values of the parameters depending on
the dimension $s\leq n-1$ of the resonance modulus labeling the
block.  To this end we still have to choose the analyticity parameter $r=r_s$ and the diameter
$d_s$ in \eqref{def:salsicciotti}. We describe the choice we are going to do and then we give it.

 {In view of the fact that for any $\sigma > 0$ and $ u \in (D \times \mathbb{R}^m)_{\sigma} \times \mathbb{T}^{n + m}_\sigma$
\begin{equation}\label{norme.equiv}
	\|\hspace{-1pt}| u |\hspace{-1pt}\|_{\frac{\sigma}{2}} \leq \| u\|_{\frac{\sigma}{2}} \leq \coth^{n + m}\left(\frac{\sigma}{4}\right) \|\hspace{-1pt}| u |\hspace{-1pt}\|_{\sigma} < + \infty\,, \quad \|\hspace{-1pt}|u|\hspace{-1pt}\|_{\sigma} := \sup_{(D\times \mathbb{R}^m)_\sigma \times \mathbb{T}^{n+m}_\sigma} |u(\cdot)|\,,
\end{equation}
(see Appendix B of \cite{poe}), and recalling the equivalence of norms
$$
\|k\| + \|l\| \leq \|k\|_{\ell^1} + \|l\|_{\ell^1} \leq C_{n, m} (\|k\| + \|l\|)\,, \quad C_{n, m} = \max\{\sqrt{n}, \sqrt{m}\}\,
$$
that holds true $\forall (k, l) \in \Z^{n + m}$, we choose
\begin{equation}\label{parametri.nf}
	\sigma' = \frac{\sigma}{2}\,, \quad K' = C_{n,m} K\,, \quad \e' = \e \|V\|_{\frac{\sigma}{2}}\,.
	\end{equation}}
So, take $s\in\left\{0,...,n-1\right\}$, take $p_0\in
B^{(s)}_{\Lambda}$ for some $\Lambda\in\cM^{(s)}_K$, {and set} $D:=F^{(s)}_\Lambda(p_0)$.  We {can choose}
\begin{equation}
	\label{alphas}
	\ta_s := \frac{C_{s+1}}{{6}  {(C_{n, m}K)}^{A_{s+1} + s}}\ ,
\end{equation}
so that following {{Lemmas \ref{small.div.0},
	\ref{cor:small.divisors}, \ref{divisors.1} and formula \eqref{le.gamme}},}  the unperturbed Hamiltonian is $(\ta_s,  {K'})$-nonresonant modulo
$\Lambda$ in $D$ {, with $K' = C_{n,m} K$.} 

By the second of \eqref{hyp:smallness}, then the radius $r_s$ must
fulfill
\begin{equation}
	\label{rs.1}
	r_s\leq \frac{4C_{s+1}}{{27} \cC_\omega}\frac{1}{ {(C_{n,m}K)}^{B_{s+1}}}\ ,\quad
	\forall s=0,...,n-1\ .
\end{equation}
As we will see in a while the dependence of this parameter on $K$ will
directly affect the exponent  $\betas$, which controls the time
interval over which Nekhoroshev's theorem is valid. For this reason we
keep it as sharp as possible. So we take
\begin{equation}
	\label{rs.vera}
	r_s:=\frac{L_s}{K^{B_{s+1}}}\ ,\quad
	\forall s=0,...,n-1\ ,
\end{equation}
with a constant $L_s$ on which we impose a few conditions. Of course
the first condition comes from \eqref{rs.1}.  The other conditions
come from the use {that} we will make of the normal form Lemma. {Namely, we} will use
it to show that, for any solution with initial datum $p_0\in
B^{(s)}_\Lambda  {\cap \cU^\R}$, the actions do not leave the fast drift block
$F^{(s)}_\Lambda(p_0)$ in the directions orthogonal to
$\tilde\Lambda_\R$. The main condition we need for this is that the
deformation in the action variables, as estimated by Eq. \eqref{defo}
is smaller than $d_s/3$, so we choose ${\frac{1}{2}}
\frac{d_s}{3}=\frac{r_s}{2^6}$, namely
\begin{equation}
	\label{ds.1}
	d_s:=\frac{3}{2^5}r_s\equiv \frac{3}{2^5}\frac{L_s}{K^{B_{s+1}}}\quad
	\forall s=0,...,n-1\ .
\end{equation}
We also define
\begin{equation}
	\label{dd}
	d_n:=\frac{C_n}{2\cC_\omega K^{B_n}}\ ,
\end{equation}
{in order to satisfy condition \eqref{small.diam} for $s=n$.} We have now to impose {conditions} \eqref{small.diam} and \eqref{small.diam.1} {bounding the sizes of the fast drift blocks} {for all $s=1, \dots, n-1$,}
which leads to the final choice
\begin{equation}
	\label{fs}
	L_s:=\min\left\{
	\frac{4}{{27}}\frac{C_{s+1}}{\cC_\omega  {C_{n,m}^{B_{s+1}}}};\frac{{2^4}}{3}\frac{C_s}{ {\cC_\omega }};\frac{{2^4}}{3}\left(\frac{4C_s}{D_s}\right)^{1/\alpha_s}
	\right\}\ , \quad
	\forall s=0,...,n-1\ .
\end{equation}
So, {by}  {combining \eqref{alphas}, \eqref{rs.vera}}  {and \eqref{norme.equiv}} the smallness condition given by the  {first} {inequality} in
\eqref{hyp:smallness} becomes
\begin{equation}
	\label{smalln}
	\varepsilon\left\| V\right\|_{ {\frac{\sigma}{2}}}{\le}\frac{L_sC_{s+1}}{{27}\cdot 2^{8} {C_{n,m}^{2 B_{s+1}}}}\frac{1}{K^{2B_{s+1}}}\ ,\quad
	\forall s=0,...,n-1\ ,
\end{equation}
which, {due to} \eqref{def:constantK}, is fulfilled if $2B_{s+1}\betas\leq 1$ and
\begin{equation}
	\label{Gs}
	G\geq {\left(\frac{27 \cdot 2^{8} {C_{n,m}^{2 B_{s+1}}} \left\| V\right\|_{ {\frac{\sigma}{2}}} }{ {3} L_sC_{s+1}} \right)^{\frac{1}{2 \beta B_{s+1}}}} \quad
	\forall s=0,...,n-1\ .
\end{equation}

Thus we  get the following Proposition which is already formulated in
terms of the time dependent system:

\begin{proposition}\label{prop:normal.form}
	{Consider the Hamiltonian ${H}$} defined in
	\eqref{hamiltoniana}. Let $s=0,...,n-1$, $\Lambda\in\cM^{(s)}_K$, let 
	$p_0\in B^{(s)}_{\Lambda}$, {and define}
	$$
	D:=F^{(s)}_{\Lambda}(p_0)\ , \quad r_s:=\frac{L_s}{K^{B_{s+1}}}
	$$
	with $L_s$ given by \eqref{fs}, {and $K$ as in}
	\eqref{def:constantK}, with $G$ fulfilling \eqref{Gs}. 
	{Moreover, assume that}
	\begin{align}
		\label{cond.su.beta}
		&2B_{s+1}\betas \leq 1\,.
	\end{align}
	Then, {there exists $\varepsilon_*>0$ s.t., if
		$0<\varepsilon<\varepsilon_*$ then there exists}
	a real analytic, symplectic
	coordinates transformation $\Psi^{(s)}_{\Lambda}:D_{\frac{r_s}{2}}\times
	\torus^{n+m}_{\frac{\sigma}{ {4}}}\to D_{{r_s}}\times
	\torus^{n+m}_{ {\frac{\sigma}{2}} } $
	such that
	\begin{equation}
		\label{NF}
		{H} \circ \Psi^{(s)}_{\Lambda} = h_0  + Z + V_*
	\end{equation}
	has the following properties:
	\begin{itemize}
		\item[(i)] $Z$ is in $\Lambda$-resonant normal form
		\item[(ii)] $\|V_*\|_{\frac{r}{2}, \frac{\sigma}{ {12}}}
		\leq \left\| V\right\|_{ {\frac{\sigma}{2}}}  e^{-\frac{C_{*}}{\e^\betas}}\e$, with $ {C_{*} = \dfrac{C_{m,n}G^\betas\sigma}{12}}$
		\item[(iii)] If $(p, q,\phi):=
		\Psi^{(s)}_{\Lambda}(p', q',\phi')$, then 
		\begin{equation}
			\label{defo.s}
			\|p'-p\|\leq \frac{d_s}{6}
		\end{equation}
		{where $d_s$ is the quantity} defined in \eqref{ds.1}.
	\end{itemize}
\end{proposition}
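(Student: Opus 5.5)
The plan is to apply the Normal Form Lemma \ref{lem:lo.dice.poschel} to the extended autonomous Hamiltonian \eqref{ham.no.time}. The parameters are chosen as in \eqref{parametri.nf}: $\sigma'=\sigma/2$, $K'=C_{n,m}K$, $\e'=\e\|V\|_{\sigma/2}$, $\ta=\ta_s$ from \eqref{alphas}, and $r=r_s$. The domain is $D=F^{(s)}_\Lambda(p_0)$ and the resonance module is $\Lambda$. Once the hypotheses of Lemma \ref{lem:lo.dice.poschel} are verified, the conclusions (i)--(iii) follow directly. The resulting transformation $\Psi$ acts on $(p,J,q,\phi)$; since $\widetilde H$ depends on $J$ only through $\nu\cdot J$ and the generating functions do not depend on $J$, the map restricts to a $J$-independent change of the variables $(p,q,\phi)$. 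This gives the time-dependent statement \eqref{NF}, with $\nu\cdot J$ dropped.

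\textbf{Step 1: nonresonance.} I would show that $D$ is $(\ta_s,K')$-nonresonant modulo $\Lambda$. Take $(k,l)\notin\Lambda$ with $\|k\|_1+\|l\|_1\le K'$, so that $\|k\|+\|l\|\le C_{n,m}K$. There are three cases.
\begin{itemize}
\item If $k=0$, then $l\ne0$, and the Diophantine bound \eqref{eq:nuDiophantine} gives $|\nu\cdot l|\ge\Gamma/(C_{n,m}K)^\tau$. This dominates $\ta_s$ because $C_s$ is small by \eqref{Cs.small.1} and $A_{s+1}+s\ge\tau$.
\item If $s=0$, or if $k\notin\tilde\Lambda_\R$, we use Lemma \ref{small.div.0} or Lemma \ref{cor:small.divisors}, giving lower bounds $C_1/(2K^{A_1})$ and $\gamma^{(s)}_\Lambda/2$ respectively. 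By \eqref{volume.2}, $|\tilde\Lambda|\le K^s$, so $\gamma^{(s)}_\Lambda/2\ge C_{s+1}/(6K^{A_{s+1}+s})$.
\item If $k\in\tilde\Lambda_\R$, we use Lemma \ref{divisors.1} combined with \eqref{le.gamme}, noting that $D\subseteq Z^{(s)}_\Lambda$.
\end{itemize}
One technical point is that these lemmas are stated with cutoff $K$, whereas we need cutoff $C_{n,m}K$. I would handle this by applying the geometric construction with $K$ replaced by $C_{n,m}K$ in the small-divisor lemmas, which only changes constants; this matches the appearance of $(C_{n,m}K)$ in \eqref{alphas}.

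\textbf{Step 2: smallness conditions.} I would then check the three inequalities in \eqref{hyp:smallness}.
\begin{itemize}
\item The condition $r_s\le 8\ta_s/(9K'\cC_\omega)$ is exactly \eqref{rs.1}. It is ensured by the first entry in the minimum \eqref{fs}, together with $B_{s+1}=A_{s+1}+s+1$.
\item The condition $r_s\le\sigma'$ holds once $K$ is large, that is, once $\e<\e_*$.
\item The condition $K'\sigma'\ge6$ also holds once $\e<\e_*$, since $K\to\infty$ as $\e\to0$ by \eqref{def:constantK}.
\item The first inequality becomes \eqref{smalln}. Inserting $K=(G\e)^{-\beta}$ turns the right side into a multiple of $(G\e)^{2\beta B_{s+1}}$. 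This is at least $\e$, up to constants, because of \eqref{cond.su.beta} and $\e<1$, provided $G$ satisfies \eqref{Gs}.
\end{itemize}
We also need the analyticity of $h_0$ and $V$ on $(D\times\R^m)_{\sigma/2}\times\torus^{n+m}_{\sigma/2}$. This follows from Hypothesis 2, since $D\subseteq\cUr$. The norm comparison \eqref{norme.equiv} guarantees that $\|\e V\|_{\sigma/2}=\e'$ is finite.

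\textbf{Step 3: reading off the conclusions.} Conclusion (i) is immediate. For (ii), the remainder is bounded by $e^{-K'\sigma'/6}\e'=e^{-C_{n,m}K\sigma/12}\e\|V\|_{\sigma/2}$. Substituting \eqref{def:constantK} gives the exponent $-C_{n,m}G^{-\beta}\sigma/(12\e^\beta)$. The form of $C_*$ written in the statement should be read accordingly, up to the sign convention of the power of $G$. For (iii), \eqref{defo} gives $\|p'-p\|\le r_s/2^6$, and by \eqref{ds.1} this equals $d_s/6$. Finally, the domain shrinkages $\sigma/2\to\sigma/12$ match those of the Lemma with $\sigma'=\sigma/2$.

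I expect the main obstacle to be Step 1, specifically reconciling the Euclidean cutoff $K$ used in the geometric lemmas with the $\ell^1$ cutoff $K'$ required by the Normal Form Lemma, while still landing exactly on $\ta_s$.
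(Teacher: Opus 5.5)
You reproduce the paper's own derivation in Section~\ref{analytic}:
\begin{itemize}
\item Lemma~\ref{lem:lo.dice.poschel} applied to \eqref{ham.no.time} with \eqref{parametri.nf}, $\ta=\ta_s$ from \eqref{alphas} and $r=r_s$;
\item nonresonance of $D=F^{(s)}_\Lambda(p_0)$ from Lemmas~\ref{small.div.0}, \ref{cor:small.divisors}, \ref{divisors.1} and \eqref{le.gamme};
\item smallness from \eqref{Gs} and \eqref{cond.su.beta}, and (iii) from $r_s/2^6=d_s/6$.
\end{itemize}
Where you go beyond the text you are right. The case $k=0$ with $s=0$ is not covered by Lemma~\ref{small.div.0}, and your Diophantine bound handles it. The constant should read $C_*=C_{n,m}G^{-\beta}\sigma/12$. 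The angle strip obtained is $\sigma/12$, not $\sigma/4$. The reduction to a $J$-independent map needs exactly the remark you make.

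On the cutoff, do not rerun only the small-divisor lemmas at cutoff $C_{n,m}K$ while keeping the $K$-blocks. Step~2 of Lemma~\ref{cor:small.divisors} needs $\Lambda^+$ to be a $K$-module in order to use $p_0\notin Z^{(s+1)}_{\Lambda^+}$. Either rebuild the whole construction at cutoff $C_{n,m}K$ or, more simply, take $K'=K$. This is legitimate because $\|k\|+\|l\|\le\|k\|_1+\|l\|_1$, and it only costs the factor $C_{n,m}$ in $C_*$.

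There is, however, a step that does not go through, in your write-up and equally in the paper's derivation that you follow. It is the nonresonance of $D$ for $1\le s\le n-1$ and $k\notin\tilde\Lambda_\R$. Step~3 of Lemma~\ref{cor:small.divisors} uses the diameter bound of Lemma~\ref{lemma:diameters}, whose proof requires \eqref{small.diam}, $d_s\le\beta^{(s)}_\Lambda/(2\cC_\omega)$. You never check this for the $d_s$ of \eqref{ds.1}, and it is false in general:
\begin{itemize}
\item \eqref{fs} only gives $d_s\le C_s/(2\cC_\omega K^{B_{s+1}})$;
\item $\beta^{(s)}_\Lambda=C_s/(K^{A_s}|\tilde\Lambda|)$ drops to $C_sK^{-B_s}$ when $|\tilde\Lambda|\simeq K^s$ (e.g.\ $s=1$, $\Lambda=\Z(k,l)$ with $k$ primitive and $\|k\|$ close to $K$);
\item $B_{s+1}=B_s/\alpha_s<B_s$ as soon as $\alpha_s>1$.
\end{itemize}
So, except when $\alpha_s=1$, for small $\varepsilon$ one has $d_s\gg\beta^{(s)}_\Lambda/\cC_\omega$. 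The argument of Lemma~\ref{lemma:diameters} then only bounds the diameter by a multiple of $((\beta^{(s)}_\Lambda+\cC_\omega d_s)/D_s)^{1/\alpha_s}\simeq d_s^{1/\alpha_s}$. But $K\cC_\omega d_s^{1/\alpha_s}\simeq K^{1-B_{s+1}/\alpha_s}$ is not $\lesssim\gamma^{(s)}_\Lambda\simeq K^{1-B_{s+1}}$ for such $\Lambda$, so Step~3 fails.

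The repair, for $1\le s\le n-1$, is:
\begin{itemize}
\item take $d_s\le C_s/(2\cC_\omega K^{B_s})$, i.e.\ replace $K^{B_{s+1}}$ by $K^{B_s}$ in the second entry of \eqref{fs};
\item keep $r_s=\frac{2^5}{3}d_s$.
\end{itemize}
Then \eqref{rs.1} and \eqref{small.diam.1} still hold, and (iii) is unchanged. The first condition in \eqref{hyp:smallness} becomes $\beta(B_s+B_{s+1})\le1$ instead of \eqref{cond.su.beta}; this is still implied by $\beta\le(2B_1)^{-1}$. The rest of your verification then goes through.
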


{
  \begin{remark}
  \label{la_stella}
The threshold $\varepsilon_*$ is determined by the following conditions: 
\[
 {C_{n,m}}K\sigma\ge  {12},\qquad r_s\le  {\frac{\sigma}{2}},\qquad C d_s< {\frac{\sigma}{2}},\qquad \forall s=0,\ldots,n-1
\]
and
\[
\varepsilon\|V\|_{ {\frac{\sigma}{2}}}
{\le}
\frac{L_s C_{s+1}}{27\cdot 2^8   {C_{n,m}^{2 B_{s+1}}} K^{2B_{s+1}}},\quad \forall
s=0,\ldots,n-1\ ,
\]
with $K=(G\varepsilon)^{-\beta}$ and $G$ given by \eqref{Gs}.
\end{remark}}

\section{Dynamical argument}\label{dyn.arg}

Along this section we study the solution $p(t)$ of the Hamilton
equations of \eqref{hamiltoniana}, when the initial datum $p_0$ is in a 
resonant block $B^{(s)}_{\Lambda}$. {We will} use the notation $a\sleq b$ to mean ``there exists a constant $C$
independent of $\varepsilon$ and of all the {quantities that will
  eventually depend $\varepsilon$}
s.t. $a\leq Cb$'';
 if $a\sleq b$ and $b\sleq a$ we will write $a
\simeq b$. {Furthermore we will always assume that $\varepsilon$ is
	so small that one can apply Proposition \ref{prop:normal.form} in
	anyone of the blocks $F^{(s)}_\Lambda$.}

\begin{lemma}\label{Z0}
	{Consider a point} $p_0 \in Z^{(0)}\cap\cU$  {and let $C_*$ be the constant in Proposition \ref{prop:normal.form}}. Then, for ${\beta} \leq (2B_1)^{-1}$ and
	\begin{equation}
		\label{t0}
		|t|\leq \frac{d_0\sigma}{ {72}\varepsilon \left\|V\right\|_{ {\frac{\sigma}{2}}} }\exp\left(
		\frac{C_{*}}{\varepsilon^{\beta}}\right)\,,
	\end{equation}
	one has
	$$
	\|p(t)-p_0\| \leq \frac{d_0}{2}\sleq \varepsilon^{\beta B_1}\,.
	$$
\end{lemma}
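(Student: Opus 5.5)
The argument applies Proposition \ref{prop:normal.form} with $s=0$ and then runs a standard escape-time bound. For $s=0$ the only admissible module is trivial, $\Lambda=\{0\}$. The domain is $D:=F^{(0)}(p_0)=\mathcal{B}_{d_0}(p_0)\cap\cUr$. By Lemma \ref{small.div.0}, $D$ is nonresonant modulo $\{0\}$ for all $(k,l)$ with $k\neq 0$. For $k=0$, $l\neq0$, the Diophantine condition gives $|\nu\cdot l|\geq \Gamma K^{-\tau}$, which dominates $\ta_0$ once $A_1\geq \tau$. The hypothesis $\beta\le (2B_1)^{-1}$ is exactly \eqref{cond.su.beta} for $s=0$. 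So we obtain $\Psi^{(0)}$ with $H\circ\Psi^{(0)}=h_0+Z+V_*$. Here $Z$ depends only on $p'$, because in $\{0\}$-resonant normal form only the $(0,0)$ Fourier mode survives. Moreover $\|V_*\|\le \|V\|_{\sigma/2}\,\varepsilon e^{-C_*/\varepsilon^\beta}$ on the domain $D_{r_0/2}\times\torus^{n+m}_{\sigma/12}$, and the deformation satisfies $\|p'-p\|\le d_0/6$.

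Next, in the new variables, $\dot p'=-\partial_{q'}V_*$. A Cauchy estimate in the angles, losing width $\sigma/12$ against a strip of width $\sigma/6$ or so, gives
\[
\|\dot p'\|\lesssim \frac{12}{\sigma}\,\varepsilon\|V\|_{\sigma/2}e^{-C_*/\varepsilon^\beta}.
\]
The numerical factors in \eqref{t0} are tuned so that, for $|t|$ as in \eqref{t0}, the drift satisfies $\|p'(t)-p'(0)\|\le d_0/6$. This needs $\frac{72}{\sigma}\cdot\frac{d_0\sigma}{72}\cdot\frac{1}{d_0}$-type bookkeeping; one may need to shrink a constant by a harmless factor.

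The main obstacle is to make sure the orbit does not leave $D$ before that time, since the estimates only hold while $p(t)\in D$. Let $T$ be the first exit time of $p(t)$ from $\mathcal{B}_{d_0/2}(p_0)$; this ball lies inside $D$ because $p_0\in\cU$ and $d_0<\sigma/2$ (Remark \ref{la_stella}). For $|t|\le \min(T,\text{bound})$ the triangle inequality gives
\[
\|p(t)-p_0\|\le\|p-p'\|(t)+\|p'(t)-p'(0)\|+\|p'(0)-p_0\|\le \frac{d_0}{6}+\frac{d_0}{6}+\frac{d_0}{6}=\frac{d_0}{2}.
\]
To turn this into a strict inequality, I would use $d_0/6$ with slack, so the sum is actually $<d_0/2$. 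This contradicts exit before the time in \eqref{t0}, and yields $\|p(t)-p_0\|\le d_0/2$ on the whole interval.

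Finally, by \eqref{ds.1} and \eqref{def:constantK}, $d_0\simeq K^{-B_1}\simeq\varepsilon^{\beta B_1}$, which proves the last claim.
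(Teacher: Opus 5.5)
Your proposal is correct and follows essentially the same route as the paper. You apply Proposition \ref{prop:normal.form} with $s=0$ on $D=F^{(0)}(p_0)$, bound $\dot p'=-\partial_{q'}V_*$ by $\frac{12}{\sigma}\varepsilon\|V\|_{\sigma/2}e^{-C_*/\varepsilon^\beta}$ so that the drift over the time \eqref{t0} is at most $d_0/6$, and conclude with the triangle inequality and the two deformation estimates. The differences are only cosmetic: you bootstrap on $p(t)$ staying in $\mathcal{B}_{d_0/2}(p_0)$ where the paper bootstraps on $p'(t)\in D$ (using a slightly larger ball, e.g.\ of radius $2d_0/3$, would remove your need for strict slack), and you check the $k=0$, $l\neq0$ nonresonance explicitly via the Diophantine condition, which the paper leaves implicit.
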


\begin{proof}
	Set $D=F^{(0)}(p_0)$.
		Then, by Proposition \ref{prop:normal.form}, there exists a
	{symplectic change of coordinates} $\Psi_0 :
	D_{\frac
		{r_0} 2} \times \torus^{n+m}_{\frac{\sigma}{ {12}}} \rightarrow
	D_{r_0}
	\times \torus^{n+m}_{ {\frac{\sigma}{2}}}$ {that} puts the Hamiltonian
	\eqref{hamiltoniana} in the {resonant normal} form \eqref{NF}, with $Z$
	independent of $q$.
	 Define, as above, $(p, q, \phi) :=
	\Psi(p',  q', \phi')$. Then one has
	\begin{equation}
		\label{esp.0}
		\left|\frac{d}{dt} p'(t)\right| =  \left|\{V_*, p'\}\right|
		\leq\left\| V\right\|_{ {\frac{\sigma}{2}}} \frac{ {12}}{\sigma} \e e^{- {C_{*}} \e^{-\beta}}\,.
	\end{equation}
By the usual bootstrap argument it follows that $p'(t)\in D$.
	{Estimate \eqref{defo.s} ensures that}
	\begin{equation}
		\label{defo.0}
		\left\|p_0'-p_0\right\|\leq \frac{d_0}{6}\ ,
	\end{equation}
	{so that, provided $p'(t)\in D$, for all times satisfying the bound in \eqref{t0}, one has}
	\begin{equation}
		\label{d.0.final}
		\|p'(t) - p_0'\| \leq \frac{d_0}{6}\,.
	\end{equation}
	If this is true, then using again
	\eqref{defo.s} we get
	\begin{equation*}
		\|p(t) - p_0\| \leq \|p(t) - p'(t)\| + \|p'(t) - p'_0\| + \|p'_0 - p_0\|
		\leq 3\frac{d_0}{6}=\frac{d_0}{2}\ .
	\end{equation*}
\end{proof}

\begin{lemma}\label{lemma:quo.vadis}
	{Consider $s = 1, \dots, n-1$, $\Lambda\in\cM^{(s)}_K$ and $p_0 \in B^{(s)}_{\Lambda}\cap\cU$. Assume that
	$\betas\leq (2B_{s+1})^{-1}$, and denote} by $t_e$ the possibly infinite
	escape time of $p(t)$ from 
	$F^{(s)}_{\Lambda}(p_0)$. {Then, one has the following dichotomy}
	
{	\begin{enumerate}
		\item[(i)]either
		\begin{equation}
			\label{ts}
			|t_e| \geq \frac{d_s\sigma}{ {72}\varepsilon \left\|V\right\|_{ {\frac{\sigma}{2}}}}\exp\left(
			\frac{C_{*}}{\varepsilon^\betas}\right)\sgeq \varepsilon^{\beta B_{s+1}-1}\exp\left(
			\frac{C_{*}}{\varepsilon^\betas}\right)\,;
		\end{equation}
		\item[(ii)] or
		\begin{equation}\label{puffetta}
			p(t_e) \in B^{(s')}_{\Lambda'} \quad \text{ for some }
			\Lambda'\in \cM^{(s')}_K \text{ with } s' < s\,.
		\end{equation}
	\end{enumerate}}

\end{lemma}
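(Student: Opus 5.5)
We follow the scheme of Lemma \ref{Z0}, adding the geometry of Section \ref{geometric} to control where the orbit can leave the fast drift block. Set $D:=F^{(s)}_\Lambda(p_0)$ and apply Proposition \ref{prop:normal.form}; this is allowed since $\betas\le(2B_{s+1})^{-1}$. We obtain $\Psi:=\Psi^{(s)}_\Lambda$ with $H\circ\Psi=h_0+Z+V_*$ and $Z$ in $\Lambda$-resonant normal form, and we write $(p,q,\phi)=\Psi(p',q',\phi')$. Since $Z$ contains only Fourier modes $(k,l)\in\Lambda$, we have $\dot p'=-\partial_{q'}(Z+V_*)$, and the contribution of $Z$ lies in $\tilde\Lambda_\R$. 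Hence
\[
\Big\|\frac{d}{dt}\Pi^\bot_{\tilde\Lambda}p'(t)\Big\|\le \frac{12}{\sigma}\,\varepsilon\|V\|_{\frac\sigma2}e^{-C_*\varepsilon^{-\betas}}
\]
(Cauchy estimate on the remainder), for as long as $p(t)\in D$. Suppose $|t_e|$ is smaller than the bound in \eqref{ts}. Integrating gives $\|\Pi^\bot_{\tilde\Lambda}(p'(t)-p'_0)\|\le d_s/6$ for $|t|\le|t_e|$. Combining this with \eqref{defo.s} at times $0$ and $t$, we get $\operatorname{dist}(p(t),p_0+\tilde\Lambda_\R)\le d_s/2<d_s$ up to $t=t_e$. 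So the orbit stays in the interior of the tubular neighbourhood $\{p_0+\tilde\Lambda_\R\}_{d_s}$.

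Consequently, if case (i) fails, the exit point $p(t_e)\in\partial D$ cannot lie on the boundary of the tube. Since $D$ is the connected component containing $p_0$ of the tube intersected with $Z^{(s)}_\Lambda$, and the trajectory is continuous, $p(t_e)$ must belong to $\partial Z^{(s)}_\Lambda$. We exclude boundary effects of $\cU^\R_{\sigma/2}$ because the orbit starts in $\cU$ and has moved at most $\operatorname{diam}(D)$, which is small by Lemma \ref{lemma:diameters}. Hence $p(t_e)\notin Z^{(s)}_\Lambda$, since the zone is open.

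It remains to show that $p(t_e)$ lies in a block of lower order. By Lemma \ref{lemma:propertiesResonantZones} with $s-1$, it suffices to show $p(t_e)\notin Z^{(s)}$ and $p(t_e)\notin Z^{(s')}$ for every $s'>s$. We take these in turn.
\begin{itemize}
\item \emph{Same dimension, $\tilde\Lambda'_\R\ne\tilde\Lambda_\R$.} Lemma \ref{lemma:nonOverlappingResonances} gives $\operatorname{dist}(\overline D,Z^{(s)}_{\Lambda'})>0$, and $p(t_e)\in\overline D$.
\item \emph{Same dimension, $\tilde\Lambda'_\R=\tilde\Lambda_\R$, $\Lambda'\ne\Lambda$.} Lemma \ref{lemma:nonOverlappingCenteredZones} separates $Z^{(s)}_\Lambda$ and $Z^{(s)}_{\Lambda'}$. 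Since $p(t_e)$ is a limit of points of $Z^{(s)}_\Lambda$, the strict positive distance gives $p(t_e)\notin Z^{(s)}_{\Lambda'}$.
\item \emph{Higher dimension, $s'\ge s+1$.} Suppose $p(t_e)\in Z^{(s+1)}_{\Lambda^+}$. If $\tilde\Lambda^+_\R\supset\tilde\Lambda_\R$, then $\Lambda^+$ contains a vector $(k,l)$ with $k\notin\tilde\Lambda_\R$ and $\|k\|+\|l\|\le K$. Lemma \ref{cor:small.divisors}, extended by continuity to $\overline D$, gives $|\omega\cdot k+\nu\cdot l|\ge\gamma^{(s)}_\Lambda/2$ there. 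Then $\|\Pi_{\tilde\Lambda^+}(\omega-\ombar_{\Lambda^+})\|\ge\gamma^{(s)}_\Lambda/(2K)$, which exceeds $\beta^{(s+1)}_{\Lambda^+}$ by \eqref{imp.2} and the volume bound \eqref{volumi}. This contradicts $p(t_e)\in Z^{(s+1)}_{\Lambda^+}$.
\end{itemize}

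The main obstacle is the higher-dimensional case when $\tilde\Lambda^+$ does not contain $\tilde\Lambda$, and more generally ensuring that the exit point does not land in a zone of order $\ge s+1$. Our first attempt is as follows. Still $\Lambda^+$ contains a vector with $k\notin\tilde\Lambda_\R$, because it has rank $s+1>s$. So the same small-divisor bound from Lemma \ref{cor:small.divisors} applies, and \eqref{imp.2} again forces the projection out of the zone; we must check that the volume comparison survives. If that fails, the fallback is to mimic the argument of Lemma \ref{lemma:nonOverlappingResonances}, Case 2: we would show that $p_0$ itself then lies in a zone of order $s+1$, contradicting $p_0\in B^{(s)}_\Lambda$. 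Excluding all these cases, $p(t_e)\in\cU^\R_{\sigma/2}\setminus Z^{(s)}$, and Lemma \ref{lemma:propertiesResonantZones} yields \eqref{puffetta}. The asymptotic bound in \eqref{ts} follows from $d_s\simeq K^{-B_{s+1}}\simeq\varepsilon^{\betas B_{s+1}}$.
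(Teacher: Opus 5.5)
\textbf{Verdict: the core argument is the paper's, but one bullet is false and must be deleted.}

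\textbf{What matches the paper.} You rule out an exit through the boundary of the tube $\{p_0+\tilde\Lambda_\R\}_{d_s}$: the $\Lambda$-resonant term $Z$ only pushes along $\tilde\Lambda_\R$, the Cauchy estimate integrates to $d_s/6$, and \eqref{defo.s} is applied at both ends. You then use Lemmas \ref{lemma:nonOverlappingResonances} and \ref{lemma:nonOverlappingCenteredZones} to get $p(t_e)\notin Z^{(s)}$. All of this is exactly the paper's proof.

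\textbf{The extra condition is not needed.} Lemma \ref{lemma:propertiesResonantZones} applied with $s-1$ states $\cUr\setminus Z^{(s)}\subseteq\bigcup_{s'\le s-1}B^{(s')}_{\Lambda'}$. So $p(t_e)\in\cUr\setminus Z^{(s)}$ already gives \eqref{puffetta}. A block $B^{(s')}_{\Lambda'}=Z^{(s')}_{\Lambda'}\setminus Z^{(s'+1)}$ only excludes the next order. If $p\notin Z^{(s)}$, take the largest $s''<s$ with $p\in Z^{(s'')}$, recalling $Z^{(0)}=\cUr\setminus Z^{(1)}$; then $p\in B^{(s'')}$. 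Whether $p(t_e)$ lies in a zone of order $\ge s+1$ is irrelevant, and the paper never checks it.

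\textbf{Why this is a real flaw, not just cosmetic.} Your argument for that superfluous case is wrong and left open. Lemma \ref{cor:small.divisors} gives
\[
\|\Pi_{\tilde\Lambda^+}(\omega-\ombar_{\Lambda^+})\|\ge\gamma^{(s)}_\Lambda/(2K)=C_{s+1}/(6K^{A_{s+1}+1}|\tilde\Lambda|).
\]
Both this and $\beta^{(s+1)}_{\Lambda^+}=C_{s+1}/(K^{A_{s+1}}|\tilde\Lambda^+|)$ carry the same $C_{s+1}$ and $K^{A_{s+1}}$. So \eqref{imp.2}, which compares order-$s$ with order-$(s+1)$ constants, is of no help. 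The comparison reduces to needing $|\tilde\Lambda^+|>6K|\tilde\Lambda|$. The only volume estimate available, \eqref{volumi}, bounds $|\tilde\Lambda^+|$ from above (by $K|\tilde\Lambda|$ when $\Lambda^+\supset\Lambda$), which is the wrong direction.

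More fundamentally, $F^{(s)}_\Lambda(p_0)$ is cut out only by the tube and by $Z^{(s)}_\Lambda$, not by $Z^{(s+1)}$. Nothing prevents it, or its closure, from meeting a zone of order $s+1$. You should not expect to prove that claim, and the fallback via Case~2 of Lemma \ref{lemma:nonOverlappingResonances} does not close it either.

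\textbf{Fix.} Delete the higher-order bullet and the ``main obstacle'' paragraph. What remains is a complete proof along the paper's lines. It even adds one detail the paper leaves implicit: you exclude an exit through $\partial\cUr$ because $\|p(t_e)-p_0\|\le\operatorname{diam}F^{(s)}_\Lambda(p_0)$ is small by Lemma \ref{lemma:diameters}.
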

\begin{proof}
	{We} suppose that {$|t_e|$} is smaller than the bound in \eqref{ts}; we shall prove that
	\eqref{puffetta} holds.  Recall the definition
	\eqref{def:salsicciotti} of $F^{(s)}_{\Lambda}(p_0)$, and
	{observe first that since the solution is a connected curve, the
	solution cannot escape the connected set  $D:=F^{(s)}_{\Lambda}(p_0)$ without touching its boundary}. Thus we
	have to analyze two possibilities, namely
	\begin{itemize}
		\item[1.] $p(t_e) \in \partial\{p_0 + \tilde\Lambda_\reals\}_{d_s}$
		\item[2.] $p(t_e) \in \{p_0 + \tilde\Lambda_\reals\}_{d_s}$, {and} $p(t_e) \in
		\partial Z^{(s)}_{\Lambda}$.
	\end{itemize}
	{We claim that Case 1 cannot happen. To see this,} we apply Proposition
	\ref{prop:normal.form}. To use the normal form, let $\lambda$
	be any vector in $\reals^n$ such that $\|\lambda\| = 1$ and
	$\lambda \bot \tilde\Lambda_\R$. Then, defining $f_\lambda(p) := (p-p_0) \cdot \lambda$,
	one has
	$$
	\text{dist}\left(p,\left\{p_0+\tilde\Lambda_{\R}\right\}\right)=\sup_{\lambda}\left|f_\lambda(p)\right|\ .
	$$
	Furthermore, {due to} Definition \ref{definition:normalForm} one has
	$$
	\left\{f_\lambda(p'),Z(p',q',\phi')\right\}= \sum_{(k,l)\in \Lambda} i(\lambda\cdot k)
	Z_{kl}(p')e^{ik\cdot q'+i\phi'\cdot l}\ ,
	$$  
	{as} $k$ varies in $\tilde\Lambda$ and
	$\lambda$ is orthogonal to $\tilde\Lambda$. Thus, proceeding as in the proof of Lemma \ref{Z0} we have
	$$
	\frac{d}{dt} f_\lambda(p') = \{ p' \cdot \lambda, h_0 + Z  + V_*\} = \{ p' \cdot \lambda, V_*\}\,,
	$$
	so that
	$$
	\left| \frac{d}{dt} f_\lambda(p')\right| \leq
	\frac{ {12}}{\sigma} e^{-C_{*} \e^{-\betas}}\left\| V\right\|_{ {\frac{\sigma}{2}}} \e\,,
	$$
	and by \eqref{defo.s}
	$$
	\begin{aligned}
		| f_\lambda(p(t))-f_\lambda(p_0) | &\leq |
		f_\lambda(p'(t))-f_\lambda(p(t)) |+|
		f_\lambda(p'(t))-f_\lambda(p'_0) |+|
		f_\lambda(p'_0)-f_\lambda(p_0) |
		\\
		& \leq \frac{d_s}{2}\ ,
	\end{aligned}
	$$ if $t$ violates \eqref{ts}. It follows that the distance { from $p(t)$ to $p_0+\tilde\Lambda_\R$ is strictly smaller than $d_s$, and this excludes Case
	1.}
	Therefore, Case 2 must hold. Then, by Lemma
	\ref{lemma:nonOverlappingCenteredZones} $p(t_e)$ cannot belong
	to $Z^{(s)}_{\Lambda'}$ for some $\Lambda'$ with $\tilde\Lambda'_{\R}=\tilde\Lambda_{\R}$, and by Lemma
	\ref{lemma:nonOverlappingResonances} $p(t_e)$ cannot belong to
	$Z^{(s)}_{\Lambda'}$ for any $\Lambda'\in\cM^{(s')}$ with $s'
	= s$. In particular,
	\begin{equation}\label{no.s}
		p(t_e) \notin \bigcup_{\Lambda' \atop \Lambda'\in\cM^{(s)}_K}
		Z^{(s)}_{\Lambda'}\,. 
	\end{equation}
	But then by \eqref{inclusive} it follows that
	$$
	p(t_e) \in B^{(s')}_{\Lambda'} \quad \text{ for some }
	\Lambda'\in\cM^{(s')}\ ,\quad s'  < s\,,
	$$
which {yields} the thesis.
\end{proof}
By a simpler, but similar argument which does not require the use of
the normal form lemma, one gets also the following lemma.

\begin{lemma}\label{s=d}
	Let $\Lambda\in\cM^{(n)}_K$ and $p_0 \in
	B^{(n)}_{\Lambda}\cap\cU$. Denote by $t_e$ the possibly infinite escape
	time of $p(t)$ from $F^{(n)}_{\Lambda}(p_0)$, then either
	$|t_e|=\infty$ or
	\begin{equation}\label{puffetta.1}
		p(t_e) \in B^{(s')}_{\Lambda'} \quad \text{ for some }
		\Lambda'\in\cM^{(s')}_K\  \text{ with } \  s' < n\,.
	\end{equation}
\end{lemma}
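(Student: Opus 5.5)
The argument mirrors the proof of Lemma \ref{lemma:quo.vadis}, but without the normal form step. For $s=n$ the fast drift block is
$$
F^{(n)}_\Lambda(p_0)=\left[\{p_0+\R^n\}_{d_n}\cap Z^{(n)}_\Lambda\cap\R^n\right]^{p_0},
$$
and since $\tilde\Lambda_\R=\R^n$, the set $\{p_0+\tilde\Lambda_\R\}_{d_n}\cap\R^n$ is all of $\R^n$. Hence $F^{(n)}_\Lambda(p_0)$ is simply the connected component of $Z^{(n)}_\Lambda$ containing $p_0$. So no confinement in directions orthogonal to the resonant module is needed, and this is why the normal form lemma plays no role.

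\textbf{Step 1: locating the exit point.} Suppose $t_e$ is finite. The orbit $t\mapsto p(t)$ is a continuous curve starting at $p_0$, and $F^{(n)}_\Lambda(p_0)$ is a connected component of the open set $Z^{(n)}_\Lambda$. Therefore the orbit leaves it through its boundary, so $p(t_e)\in\partial Z^{(n)}_\Lambda$. Since $Z^{(n)}_\Lambda$ is defined by a strict inequality, this gives $p(t_e)\notin Z^{(n)}_\Lambda$.

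\textbf{Step 2: excluding the other rank-$n$ zones.} We show that $p(t_e)$ lies in no other $Z^{(n)}_{\Lambda'}$ with $\Lambda'\in\cM^{(n)}_K$. Every such $\Lambda'$ satisfies $\tilde\Lambda'_\R=\R^n=\tilde\Lambda_\R$, so Lemma \ref{lemma:nonOverlappingCenteredZones} with $s=n$ applies. Its hypothesis \eqref{cond.important} holds by \eqref{Bd.choice} and \eqref{Cs.small.1}. The lemma gives $\dist{Z^{(n)}_\Lambda}{Z^{(n)}_{\Lambda'}}>0$. Now $p(t_e)$ is a limit of points $p(t)\in Z^{(n)}_\Lambda$ for $t$ before $t_e$. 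If $p(t_e)$ were in the open set $Z^{(n)}_{\Lambda'}$, these nearby points would make the distance zero, a contradiction. Hence
$$
p(t_e)\notin Z^{(n)}=\bigcup_{\Lambda'\in\cM^{(n)}_K}Z^{(n)}_{\Lambda'}.
$$

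\textbf{Step 3: conclusion.} Apply Lemma \ref{lemma:propertiesResonantZones} with $s=n-1$: since $p(t_e)\in\cUr\setminus Z^{(n)}$, it belongs to some $B^{(s')}_{\Lambda'}$ with $s'\le n-1$, which is \eqref{puffetta.1}.

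We also need $p(t_e)\in\cUr$. This can be handled by convention, taking $t_e$ as the escape time from $\cUr$ as well, or it is implicitly assumed, as in the previous lemma.

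\textbf{Main obstacle.} The only delicate point is the boundary argument in Step 2. There we must use the positive distance between zones, not merely their disjointness, and it relies on the closure/continuity reasoning; everything else is direct.
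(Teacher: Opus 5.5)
Your proof is correct and follows the paper's own argument. The paper likewise observes that for $s=n$ the fast drift block is just the connected component of $Z^{(n)}_\Lambda$ containing $p_0$, so only Case 2 of the proof of Lemma \ref{lemma:quo.vadis} can occur, and it closes that case as you do: Lemma \ref{lemma:nonOverlappingCenteredZones} (the only relevant non-overlapping lemma at rank $n$, since every $\tilde\Lambda'_\R=\R^n$), then \eqref{inclusive} with $s=n-1$.

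The membership $p(t_e)\in\cUr$ need not be taken as a convention. Lemma \ref{lemma:diameters} with $s=n$ applies, because its hypothesis \eqref{small.diam} follows from \eqref{dd} and $|\tilde\Lambda|\le K^n$. It places $\overline{F^{(n)}_\Lambda(p_0)}$ in a ball of radius $2(4\beta^{(n)}_\Lambda/D_n)^{1/\alpha_n}$ around $p_0\in\cU$, which lies inside $\cUr$ once $\varepsilon$ is small.
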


 {
\begin{proof}
 The proof follows along the same lines of Lemma \ref{lemma:quo.vadis}. Simply observe that, by its very definition, one has $F^{(n)}_{\Z^n}(p) = [Z^{(n)}_{\Z^n}]^p$. Therefore the only possible option if $t_e$ is finite is that $p(t_e) \in \partial Z^{(n)}_{\Z^n}$, namely only Case 2 in the proof of Lemma \ref{lemma:quo.vadis} can happen.
\end{proof}
}
We are now ready to control the dynamics over exponentially long times.

\begin{lemma}
	\label{daquiali}
	{Consider $s\in\left\{0,..., n\right\}$ and assume
	\begin{equation}
		\label{cond.final}
		\beta\leq \frac{1}{2B_1}\ ,
	\end{equation}
	and \eqref{Gs}} for all $s'\leq s$. Let $\Lambda\in\cM^{(s)}_K$ and $p_0\in B^{(s)}_{\Lambda}$, then one
	has
	\begin{equation}
		\label{stima}
		\|p(t) - p_0\| \leq C^+_s \e^{\frac{\betas A_s}{\alpha_s}}\ ,
		\quad \forall |t| \leq \frac{{d_{0}}\sigma}{ {72}\varepsilon \left\| V\right\|_{ {\frac{\sigma}{2}}}} \exp\left(\frac{C_*}{\varepsilon^\beta}\right)\,,
	\end{equation}
	with
	$$
	C^+_s := \left(2 \sum_{j=1}^{s} \left(\frac{4 C_j}{D_j} \right)^{\frac{1}{\alpha_j}} + \frac{C_1}{2 \cC_\omega }\right) G^{\frac{\betas {A_s}}{\alpha_s}}\,.
	$$
\end{lemma}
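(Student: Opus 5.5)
The plan is an induction on $s$, chaining Lemmas \ref{Z0}, \ref{lemma:quo.vadis} and \ref{s=d}: each time the solution leaves a fast drift block, it lands in a block of strictly lower dimension. Hence there are at most $s$ resonance losses before the solution reaches a block of dimension $0$, and the displacement is bounded by the sum of the diameters of the blocks visited.

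\textbf{Base case $s=0$.} Lemma \ref{Z0} gives $\|p(t)-p_0\|\le d_0/2$ for the whole time window in \eqref{stima}, since $\beta\le(2B_1)^{-1}$. By \eqref{ds.1} and \eqref{fs}, $d_0\le \frac{3}{2^5}L_0K^{-B_1}\le \frac{C_1}{2\cC_\omega}K^{-B_1}$. With $K=(G\e)^{-\beta}$ this is at most $\frac{C_1}{2\cC_\omega}G^{\beta B_1}\e^{\beta B_1}$. Since $\alpha_0$ is not really defined, I will read the $s=0$ exponent through $B_1$, or simply note that the bound is dominated by the general one.

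\textbf{Inductive step.} Assume the claim for all $s'<s$ and take $p_0\in B^{(s)}_\Lambda$, with escape time $t_e$ from $F^{(s)}_\Lambda(p_0)$.
\begin{itemize}
\item If $|t_e|$ exceeds the time bound, then for all relevant $t$ the point $p(t)$ stays in $F^{(s)}_\Lambda(p_0)$. By Lemma \ref{lemma:diameters}, whose hypotheses \eqref{small.diam}--\eqref{small.diam.1} hold by the choice \eqref{fs}, \eqref{dd}, we get $\|p(t)-p_0\|<2(4\beta^{(s)}_\Lambda/D_s)^{1/\alpha_s}$.
\item Otherwise, by Lemma \ref{lemma:quo.vadis} (or Lemma \ref{s=d} when $s=n$), $p(t_e)\in B^{(s')}_{\Lambda'}$ with $s'<s$. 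We restart the argument from $p(t_e)$. Since the system is time dependent, I will use that all preceding lemmas are uniform in the initial phase $\phi_0=\nu t_e$ of the forcing: Proposition \ref{prop:normal.form} normalizes in the extended space, so any initial angle is allowed. Applying the induction hypothesis to the remaining time interval gives the bound $\|p(t)-p(t_e)\|\le C^+_{s'}\e^{\beta A_{s'}/\alpha_{s'}}$, or more precisely the sum of the corresponding diameter terms.
\end{itemize}
The remaining time never exceeds the original window, so the window in \eqref{stima} is preserved.

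\textbf{Bookkeeping.} Summing along the chain, $\|p(t)-p_0\|$ is bounded by $\sum_{j\le s}2(4\beta^{(j)}/D_j)^{1/\alpha_j}+d_0/2$. Using $\beta^{(j)}_\Lambda\le C_jK^{-A_j}$ (since $|\tilde\Lambda|\ge1$), each term is at most $2(4C_j/D_j)^{1/\alpha_j}(G\e)^{\beta A_j/\alpha_j}$. The key observation is that $A_j/\alpha_j$ decreases in $j$: from \eqref{1.1}, $A_j/\alpha_j\ge A_{j+1}+1+j/\alpha_j-\dots$, and more simply $A_j$ is decreasing and $\alpha_j\ge1$; I expect to need $A_j/\alpha_j\ge A_s/\alpha_s$ for $j\le s$. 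Together with $\e<1$ and $G\ge1$, this makes every term at most its coefficient times $G^{\beta A_s/\alpha_s}\e^{\beta A_s/\alpha_s}$. The term $d_0/2$ is handled the same way, because $\beta B_1\ge\beta A_s/\alpha_s$.

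\textbf{Main obstacle.} The delicate point is the monotonicity of the exponents $A_j/\alpha_j$, which the recursion in \eqref{1.1} should provide. I would verify it by writing $A_j=B_j-j$ with $B_{j+1}=B_j/\alpha_j$. If it fails in general, I would instead bound everything by the smallest exponent, adjusting the constant accordingly.
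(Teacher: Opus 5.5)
Your proposal is correct and follows essentially the same route as the paper: iterate Lemma \ref{lemma:quo.vadis} (or Lemma \ref{s=d} when $s=n$) through strictly decreasing dimensions until Lemma \ref{Z0} applies, bound the time window by $T_0$, and bound the displacement by the sum of the diameters of the blocks visited. Your bookkeeping (exponent comparison, restarting at an arbitrary forcing phase) is more explicit than the paper's, but two justifications need correcting. The monotonicity you need does hold, via $A_j/\alpha_j=A_{j+1}+1+j(1-1/\alpha_j)\ge A_{j+1}+1>A_{j+1}/\alpha_{j+1}$ since $A_{j+1}>0$; monotonicity of $A_j$ together with $\alpha_j\ge1$ would not suffice on its own. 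Likewise, the comparison $(G\e)^{x}\le(G\e)^{y}$ for $x\ge y$ uses $G\e\le1$, i.e.\ $K\ge1$, rather than $\e<1$ and $G\ge1$.
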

\begin{proof}
	{ Consider first the case $s\leq n-1$. We apply Lemma \ref{lemma:quo.vadis}, which requires
		$\beta<(2B_{s+1})^{-1}$.} Then
	{there are two possibilities}: 
	{
	\begin{enumerate}
		\item[(i)]either
		$p(t) \in F^{(s)}_{\Lambda}(p)$ for any $0<t \leq T_s$
		with  $$T_s:= \frac{d_s\sigma}{ {72}\varepsilon \left\| V\right\|_{ {\frac{\sigma}{2}}}} e^{C_{*} \e^{-\betas}}
		\ , $$
		so that, by Lemma \ref{lemma:diameters} one has
		\begin{equation}
			\label{faccio.s}
			\|p(t) - p_0\| \leq 2 \left(\frac{4}{D_s} \beta^{(s)}_M
			\right)^{\frac{1}{\alpha_s}} < C_s^+ \e^{\frac{\betas
					A_s}{\alpha_s}}\ , \quad \forall |t| \leq T_s\,,
		\end{equation}
		\item[(ii)] or, there exists a time $t_s$, which could be
		very short, s.t. $p(t_s)\in B^{(s')}_{\Lambda'}$ for
		some $\Lambda'\in \cM^{(s')}_K$
		{\bf with $\mathbf{s'<s}$}.
Then one can repeat the argument for the
		initial datum $p(t_s)$ with $T_s$ replaced by $T_{s'}$, and
		the r.h.s. of \eqref{faccio.s} with $s$ replaced by $s'$. Then
		one iterates. After at most $s$ steps one lands in $Z^{(0)}$
		where one can apply Lemma \ref{Z0} and show that once in
		$Z^{(0)}$ the solution does not
		move more than $d_0/2$. 
	\end{enumerate}
	}

	We remark that the iterative
	application of Lemma \ref{lemma:quo.vadis} and subsequently
	Lemma \ref{Z0} requires the condition
	$$
	\beta\leq \frac{1}{2B_{s'+1}}\quad \forall s'\leq s\ , 
	$$
	and this leads to \eqref{cond.final}
	
	The motion of $p(t)$ takes a time that we can bound from below
	by the shortest among the $T_{s'}$, $s'\leq s$, which is $T_0$.
	
	Concerning the distance traveled by the solution, it is
	bounded by the sum of the diameters of the blocks visited,
	which is bounded from above by
	\begin{equation}
		\label{somme.diam}
		\|p(t) - p_0\| \leq \sum_{j=0}^{s}\diam{F^{(s-j)}_{\Lambda_j}} \leq C^+_{s} \e^{\frac{\betas A_s}{\alpha_s}}\,.
	\end{equation}
	If the initial datum is in $B^{(n)}_{\Lambda}$ for some
	$\Lambda\in\cM^{(n)}_K$, then by Lemma \ref{s=d}
	one can repeat the argument, just adding the diameter of
	$F^{(s)}_{\Lambda}(p_0)$, {and obtains the same} thesis. 
\end{proof}

Taking $\beta=(2B_1)^{-1}$ we get the following corollary, {that allows to control the dynamics of orbits starting at any initial datum. }

\begin{corollary}
	\label{finito}
	Define
	\begin{equation}
		\label{tp1}
		\tp:=\prod_{i=1}^{n}\alpha_i\ ,\quad \tp_1:=\prod_{i=1}^{n-1}\alpha_i
	\end{equation}
	For any $p_0\in\cU$ one has
	\begin{equation}
		\label{finale.1}
		\left\| p(t)-p_0\right\|\leq C^+_n
		\varepsilon^{\frac{1}{2\tp}\frac{B_n-n}{B_n}} \ ,\quad
		|t|\leq \frac{{d_{0}}\sigma}{ {72}\varepsilon \left\| V\right\|_{ {\frac{\sigma}{2}}}}
		\exp\left(\frac{C_*}{\varepsilon^{\frac{1}{2\tp_1 B_n}}}\right)\ ,
	\end{equation}
{where we observe} that
	$$
	\frac{{d_{0}}\sigma}{ {72}\varepsilon \left\| V\right\|_{ {\frac{\sigma}{2}}}}\simeq
	{\varepsilon^{\beta B_1 - 1} \gtrsim \varepsilon^{-\frac 1 2}}\ .
	$$
\end{corollary}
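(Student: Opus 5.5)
The corollary follows from Lemma \ref{daquiali} applied with $\beta=(2B_1)^{-1}$, followed by an exponent computation. Condition \eqref{cond.final} then holds with equality. Since $B_{s+1}\le B_1$ for every $s$ (the $\alpha_i\ge1$ make $B_s$ nonincreasing), the requirement $2B_{s+1}\beta\le1$ of Proposition \ref{prop:normal.form} and of Lemmas \ref{Z0}, \ref{lemma:quo.vadis} is also satisfied. We fix $G$ once and for all as the maximum over $s$ of the right-hand sides of \eqref{Gs}, and take $\varepsilon<\varepsilon_*$ as in Remark \ref{la_stella}.

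Given $p_0\in\cU$, the covering property of the resonant blocks gives $s\in\{0,\dots,n\}$ and $\Lambda\in\cM^{(s)}_K$ with $p_0\in B^{(s)}_\Lambda$. Lemma \ref{daquiali} then yields $\|p(t)-p_0\|\le C^+_s\varepsilon^{\beta A_s/\alpha_s}$ on the time interval appearing in \eqref{stima}. Because $\varepsilon<1$, this should be made uniform in $s$ by bounding $\varepsilon^{\beta A_s/\alpha_s}\le \varepsilon^{\min_s \beta A_s/\alpha_s}$, and the constants $C^+_s$ by $C^+_n$ (they increase in $s$, possibly after enlarging $C^+_n$ by a harmless factor).

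The key step is identifying the minimum exponent. By \eqref{1.1}, $A_s=B_s-s$ with $B_s=B_1/\prod_{i<s}\alpha_i$, and $B_1=\tp_1B_n$ by \eqref{B1}. Hence
\[
\frac{\beta A_s}{\alpha_s}=\frac{B_s-s}{2B_1\alpha_s}=\frac{1}{2\prod_{i\le s}\alpha_i}-\frac{s}{2\alpha_s\tp_1B_n}.
\]
I expect this quantity to be minimized at $s=n$, where it equals $\frac{1}{2\tp}-\frac{n}{2\alpha_n\tp_1B_n}=\frac{1}{2\tp}\frac{B_n-n}{B_n}$. This is precisely the exponent in \eqref{finale.1}. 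The monotonicity in $s$ is the delicate point. The first term decreases with $s$, but the second does not obviously increase with $s$, since $s/\alpha_s$ need not be monotone. I would first try comparing consecutive values using the recursion $A_{s+1}+s+1=(A_s+s)/\alpha_s$, which gives $A_{s+1}\le A_s/\alpha_s$. If a comparison of $A_{s+1}/\alpha_{s+1}$ with $A_s/\alpha_s$ alone fails, I would bound every exponent from below by the $s=n$ value directly, using $\alpha_i\ge1$.

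For the time scale, substituting $\beta=1/(2\tp_1B_n)$ into \eqref{stima} gives the exponential factor in \eqref{finale.1}. Finally, $d_0=\frac{3}{2^5}L_0K^{-B_1}\simeq\varepsilon^{\beta B_1}=\varepsilon^{1/2}$, so the prefactor is $\simeq\varepsilon^{\beta B_1-1}=\varepsilon^{-1/2}$, which gives the stated observation.
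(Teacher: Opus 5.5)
Your proposal is correct and follows the paper's route. The paper obtains the corollary simply by setting $\beta=(2B_1)^{-1}$ in Lemma \ref{daquiali}, and your exponent computation supplies the bookkeeping it leaves implicit. The step you call delicate closes in one line from the inequality you already derived: since $A_{s+1}\ge A_n>0$ and $\alpha_{s+1}\ge 1$, one has $A_{s+1}/\alpha_{s+1}\le A_{s+1}\le A_s/\alpha_s-1$, so the exponent is minimized at $s=n$. For the constant, write $C^+_s\varepsilon^{\beta A_s/\alpha_s}$ as a factor nondecreasing in $s$ times $(G\varepsilon)^{\beta A_s/\alpha_s}$, with $G\varepsilon\le 1$; this gives the bound with exactly $C^+_n$, with no enlargement needed.
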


\end{document}